\newtheorem{deff}{Definition}[section]
\newtheorem{prop}[deff]{Proposition}
\newtheorem{thm}[deff]{Theorem}
\newtheorem{lem}[deff]{Lemma}
\newtheorem{cor}[deff]{Corollary}
\newtheorem{rmk}[deff]{\it Remark}
\newcounter{cst}
\def\1{{\bf 1}}
\def\be{\begin{equation}}
\def\ee{\end{equation}}
\begin{document}

\title{On some new results on anisotropic singular perturbations of second-order elliptic operators}
\author{David Maltese}\address{LAMA, Univ. Gustave Eiffel, Univ. Paris Est Cr\'eteil, CNRS, F-77454 Marne-la-Vall\'ee, France.\\ \href{david.maltese@univ-eiffel.fr}{\tt david.maltese@univ-eiffel.fr}}
\author{Chokri Ogabi}\address{LAMA, Univ. Gustave Eiffel, Univ. Paris Est Cr\'eteil, CNRS, F-77454 Marne-la-Vall\'ee, France.\\ \href{chokri.ogabi@univ-eiffel.fr}{\tt chokri.ogabi@univ-eiffel.fr}}
\begin{abstract} 
In this article, we deal with  some problems involving a class of singularly perturbed elliptic operators. We prove the asymptotic preserving of a general Galerkin method associated to a semilinear problem. We use a particular Galerkin approximation to estimate the convergence rate on the whole domain, for the linear problem. Finally, we study the asymptotic behavior of the semigroup generated.
\end{abstract}
\subjclass{35J15, 35B60, 35B25, 47D03}
\keywords{Anisotropic singular perturbations, elliptic problems ,rate of convergence, second-order elliptic operator, perturbed semigroups}
\maketitle

\section{\protect\bigskip introduction}

Anisotropic singular perturbations problems were introduced by Chipot in \cite{Chipot}, these problems can model diffusion phenomena when the diffusion parameters become small in certain directions. We refer the reader to \cite{chip-guesm1},\cite{chip-guesm2},\cite{chip-guesm3},\cite{chip-guesm4},\cite{guesm1},\cite{guesm2},\cite{guesm3},\cite{ogabi1},\cite{ogabi2},\cite{ogabi3} for several works on this
topic. In this article, we will study some new theoretical aspects which
have not been studied before for these problems. 

Let us consider the following perturbed elliptic problem 
\begin{equation}
\beta (u_{\epsilon })-\text{div}(A_{\epsilon }\nabla u_{\epsilon })=f~\text{%
in}~\Omega ,  \label{u:probcont}
\end{equation}%
supplemented with the boundary condition 
\begin{equation}
u_{\epsilon }=0~\text{on}~\partial \Omega .  \label{u:probbound}
\end{equation}%
Here, $\Omega =\omega _{1}\times \omega _{2}$ where $\omega _{1}$ and $%
\omega _{2}$ are two bounded open sets of $%
\mathbb{R}
^{q}$ and $%
\mathbb{R}
^{N-q},$ with $N>q\geq 1$, and $f\in L^{2}(\Omega ).$ We denote by $%
x=(x_{1},...,x_{N})=(X_{1},X_{2})\in 
\mathbb{R}
^{q}\times 
\mathbb{R}
^{N-q}$ i.e. we split the coordinates into two parts. With this notation we
set%
\begin{equation*}
\nabla =(\partial _{x_{1}},...,\partial _{x_{N}})^{T}=\binom{\nabla _{X_{1}}%
}{\nabla _{X_{2}}},\text{ }
\end{equation*}%
where 
\begin{equation*}
\nabla _{X_{1}}=(\partial _{x_{1}},...,\partial _{x_{q}})^{T}\text{ and }%
\nabla _{X_{2}}=(\partial _{x_{q+1}},...,\partial _{x_{N}})^{T}.
\end{equation*}%
The function $A=(a_{ij})_{1\leq i,j\leq N}:\Omega \rightarrow \mathcal{M}%
_{N}(\mathbb{R})$ satisfies the ellipticity assumptions

\begin{itemize}
\item There exists $\lambda > 0$ such that for a.e. $x \in \Omega$ 
\begin{equation}
A\xi \cdot \xi \geq \lambda \left\vert \xi \right\vert ^{2}~\text{for any}~
\xi \in 
\mathbb{R}
^{N}.  \label{hypA1}
\end{equation}

\item The coefficients of $A$ are bounded, that is 
\begin{equation}  \label{hypA2}
a_{ij}\in L^{\infty }(\Omega )~\text{for any}~ (i,j) \in \{1,2,....,N\}^2.
\end{equation}
\end{itemize}
We have decomposed $A$ into four blocks

\begin{equation*}
A=%
\begin{pmatrix}
A_{11} & A_{12} \\ 
A_{21} & A_{22}%
\end{pmatrix}%
,
\end{equation*}
where $A_{11}$, $A_{22}$ are $q\times q$ and $(N-q)\times (N-q)$ matrices
respectively. For $\epsilon \in (0,1]$ we have set 
\begin{equation*}
A_{\epsilon }=%
\begin{pmatrix}
\epsilon ^{2}A_{11} & \epsilon A_{12} \\ 
\epsilon A_{21} & A_{22}%
\end{pmatrix}.%
\end{equation*}%
The function $\beta :%
\mathbb{R}
\rightarrow 
\mathbb{R}
$ satisfies the following conditions:%
\begin{equation}
\beta \text{ is continuous and nondecreasing with }\beta (0)=0.
\label{hypBeta1}
\end{equation}%
\begin{equation}
\exists M\geq 0:\forall s\in 
\mathbb{R}
,\left\vert \beta (s)\right\vert \leq M\left( 1+\left\vert s\right\vert
\right).  \label{hypBeta2}
\end{equation}
The weak formulation of problem ($\ref{u:probcont}$)-($\ref{u:probbound}$%
) is 
\begin{equation}
\left\{ 
\begin{array}{l}
\int_{\Omega }\beta (u_{\epsilon })\varphi dx+\int_{\Omega }A_{\epsilon
}\nabla u_{\epsilon }\cdot \nabla \varphi dx=\int_{\Omega }f\text{ }\varphi
dx\text{, }\forall \varphi \in H_{0}^{1}(\Omega )\text{}
\\ 
u_{\epsilon }\in H_{0}^{1}(\Omega )\text{,\ \ \ \ \ \ \ \ \ \ \ \ \ \ \ \ \ \
\ \ \ \ \ \ \ \ \ \ \ \ \ \ \ \ \ \ \ \ \ \ \ \ \ \ \ \ \ \ \ \ \ \ \ \ \ \
\ \ \ \ \ \ \ \ \ \ \ \ \ \ \ \ \ \ \ \ \ \ \ \ \ \ \ }%
\end{array}%
\right.   \label{probcontweak}
\end{equation}%
where the existence and the uniqueness follow from assumptions $(\ref%
{hypA1}-\ref{hypBeta2})$.
The limit problem is given by 
\begin{equation}
\beta (u)-\text{div}_{X_{2}}(A_{22}\nabla u)=f~\text{on}~\Omega ,
\label{u:probcontlimit}
\end{equation}%
supplemented with the boundary condition 
\begin{equation}
u(X_{1},\cdot )=0~\text{in}~\partial \omega _{2},~\text{for}~X_{1}\in \omega
_{1}.  \label{u:problimitbound}
\end{equation}%
We introduce the functional space 
\begin{equation*}
H_{0}^{1}(\Omega ;\omega _{2})=\left\{ v\in L^{2}(\Omega )~\text{such that}%
~\nabla _{X_{2}}v\in L^{2}(\Omega )^{N-q}\text{ and for a.e. }~X_{1}\in
\omega _{1},v(X_{1},\cdot )\in H_{0}^{1}(\omega _{2})\right\},
\end{equation*}%
equipped with the norm $\left\Vert \nabla _{X_{2}}(\cdot )\right\Vert
_{L^{2}(\Omega )^{N-q}}$. Notice that this norm is equivalent to 
\begin{equation*}
\left( \left\Vert (\cdot )\right\Vert _{L^{2}(\Omega )}^{2}+\left\Vert
\nabla _{X_{2}}(\cdot )\right\Vert _{L^{2}(\Omega )^{N-q}}\right) ^{1/2},
\end{equation*}%
thanks to Poincar\'{e}'s inequality 
\begin{equation}
\left\Vert v\right\Vert _{L^{2}(\Omega )}\leq C_{\omega _{2}}\left\Vert
\nabla _{X_{2}}v\right\Vert _{L^{2}(\Omega )^{N-q}},~\text{for any}~v\in
H_{0}^{1}(\Omega ;\omega _{2}).  \label{sob}
\end{equation}%
One can prove that $H_{0}^{1}(\Omega ;\omega _{2})$ is a Hilbert space. The
space $H_{0}^{1}(\Omega )$ will be normed by $\left\Vert \nabla (\cdot
)\right\Vert _{L^{2}(\Omega )^{N}}$. \ One can check immediately that the
embedding $H_{0}^{1}(\Omega )\hookrightarrow H_{0}^{1}(\Omega ,\omega _{2})$
is continuous.

The weak formulation of the limit problem $(\ref{u:probcontlimit})-(\ref{u:problimitbound})$ is given by 
\begin{equation}
\left\{ 
\begin{array}{ll}
\begin{array}{c}
\int_{\omega _{2}}\beta (u)\mathbf{(}X_{1},\cdot )\psi dX_{2}+\int_{\omega
_{2}}A_{22}\mathbf{(}X_{1},\cdot )\nabla _{X_{2}}u\mathbf{(}X_{1},\cdot
)\cdot \nabla _{X_{2}}\psi dX_{2} \\ 
=\int_{\omega _{2}}f\mathbf{(}X_{1},\cdot )\text{ }\psi dX_{2}\text{, }%
\forall \psi \in H_{0}^{1}(\omega _{2})\text{\ \ \ \ }%
\end{array}
&  \\ 
u\mathbf{(}X_{1},\cdot )\in H_{0}^{1}(\omega _{2})\text{,\ for a.e. }%
X_{1}\in \omega _{1}\text{ \ \ \ \ \ \ \ \ \ \ \ \ \ \ \ \ \ \ \ \ \ \ \ \ \
\ \ \ \ \ \ \ \ \ \ \ \ \ \ \ \ \ \ \ \ \ \ \ \ \ \ \ \ \ \ \ \ \ \ \ \ } & 
\text{ }%
\end{array}%
\right.  \label{probcontlimit}
\end{equation}%
This problem has been studied in \cite{ogabi1}, and the author proved the
following (see Proposition 4 in the above reference)

\begin{thm}
\label{thrmconvergence}Under assumptions $(%
\ref{hypA1}),(\ref{hypA2}),(\ref{hypBeta1})$ and $(\ref{hypBeta2})$ we have: 
\begin{equation*}
u_{\epsilon }\rightarrow u\text{ in }L^{2}(\Omega ),\text{ }\epsilon
\nabla _{X_{1}}u_{\epsilon }\rightarrow 0\ \text{ in }L^{2}(\Omega )^{q}%
\text{ and }\nabla _{X_{2}}u_{\epsilon }\rightarrow \nabla _{X_{2}}u\ \text{
in }L^{2}(\Omega )^{N-q},
\end{equation*}
where $u_{\epsilon }$ is the unique solution to $(\ref%
{probcontweak})$ in $H_{0}^{1}(\Omega )$ and $u$ is the unique
solution to $(\ref{probcontlimit})$ in $H_{0}^{1}(\Omega ;\omega _{2}).$
\end{thm}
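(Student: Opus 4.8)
The plan is to obtain $\epsilon$-uniform a priori bounds, extract weak limits, and identify the limit as the solution of the limit problem, then upgrade weak to strong convergence via an energy argument.

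\medskip

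\noindent\textbf{Step 1: Uniform estimates.} First I would test the weak formulation \eqref{probcontweak} with $\varphi = u_\epsilon$. Since $\beta$ is nondecreasing with $\beta(0)=0$, the term $\int_\Omega \beta(u_\epsilon) u_\epsilon \diff x \ge 0$, and the ellipticity assumption \eqref{hypA1} applied to $A_\epsilon$ gives $\int_\Omega A_\epsilon \nabla u_\epsilon \cdot \nabla u_\epsilon \diff x \ge \lambda\left( \epsilon^2 \|\nabla_{X_1} u_\epsilon\|_{L^2}^2 + \|\nabla_{X_2} u_\epsilon\|_{L^2}^2\right)$ --- more precisely one checks $A_\epsilon \xi\cdot\xi \ge \lambda(\epsilon^2|\xi_1|^2 + |\xi_2|^2)$ by writing $A_\epsilon\xi\cdot\xi = A\tilde\xi\cdot\tilde\xi$ with $\tilde\xi = (\epsilon \xi_1, \xi_2)$. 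Combined with the Cauchy--Schwarz bound on the right-hand side and Poincaré's inequality \eqref{sob}, this yields
\begin{equation*}
\|u_\epsilon\|_{L^2(\Omega)} + \epsilon\|\nabla_{X_1} u_\epsilon\|_{L^2(\Omega)^q} + \|\nabla_{X_2} u_\epsilon\|_{L^2(\Omega)^{N-q}} \le C,
\end{equation*}
with $C$ independent of $\epsilon$. Assumption \eqref{hypBeta2} then also bounds $\beta(u_\epsilon)$ in $L^2(\Omega)$ uniformly.

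\medskip

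\noindent\textbf{Step 2: Weak limits and identification.} From Step 1, up to a subsequence, $u_\epsilon \rightharpoonup u$ weakly in $H_0^1(\Omega;\omega_2)$ (hence weakly in $L^2$, and by Rellich strongly in $L^2(\Omega)$ since $\Omega$ is bounded --- one uses that $H_0^1(\Omega;\omega_2)$ embeds compactly into $L^2(\Omega)$), $\epsilon\nabla_{X_1} u_\epsilon \rightharpoonup \chi$ weakly in $L^2(\Omega)^q$, and $\nabla_{X_2} u_\epsilon \rightharpoonup \nabla_{X_2} u$ weakly in $L^2(\Omega)^{N-q}$; by continuity of $\beta$ and the strong $L^2$ convergence, $\beta(u_\epsilon) \to \beta(u)$ in $L^2(\Omega)$. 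Passing to the limit in \eqref{probcontweak} tested against $\varphi \in H_0^1(\Omega)$: the $\epsilon A_{12}, \epsilon A_{21}$ blocks and the $\epsilon^2 A_{11}$ block all vanish because they carry at least one factor of $\epsilon$ multiplying a bounded quantity (for the $\epsilon^2 A_{11}\nabla_{X_1}u_\epsilon$ term, write it as $\epsilon\cdot(\epsilon\nabla_{X_1}u_\epsilon)$), leaving $\int_\Omega \beta(u)\varphi\diff x + \int_\Omega A_{22}\nabla_{X_2}u\cdot\nabla_{X_2}\varphi\diff x = \int_\Omega f\varphi\diff x$ for all $\varphi\in H_0^1(\Omega)$, and by density this extends to all $\varphi\in H_0^1(\Omega;\omega_2)$, which is exactly the (integrated-in-$X_1$ form of the) weak formulation \eqref{probcontlimit}. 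By the uniqueness of the solution to \eqref{probcontlimit}, $u$ is identified, so the whole family converges, not just a subsequence.

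\medskip

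\noindent\textbf{Step 3: Strong convergence --- the main obstacle.} The weak convergences do not immediately give $\epsilon\nabla_{X_1}u_\epsilon \to 0$ and $\nabla_{X_2}u_\epsilon\to\nabla_{X_2}u$ strongly; this is the crux. The standard device is an energy-convergence argument. Testing \eqref{probcontweak} with $u_\epsilon$ gives $\int_\Omega \beta(u_\epsilon)u_\epsilon + \int_\Omega A_\epsilon\nabla u_\epsilon\cdot\nabla u_\epsilon = \int_\Omega f u_\epsilon$; letting $\epsilon\to0$, the right-hand side tends to $\int_\Omega fu$, and $\int_\Omega\beta(u_\epsilon)u_\epsilon \to \int_\Omega\beta(u)u$ (product of strong and strong $L^2$ convergence). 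Testing \eqref{probcontlimit} with $u$ gives $\int_\Omega\beta(u)u + \int_\Omega A_{22}\nabla_{X_2}u\cdot\nabla_{X_2}u = \int_\Omega fu$. Subtracting, $\limsup_{\epsilon\to0}\int_\Omega A_\epsilon\nabla u_\epsilon\cdot\nabla u_\epsilon \le \int_\Omega A_{22}\nabla_{X_2}u\cdot\nabla_{X_2}u$. Now one uses the coercivity $A_\epsilon\nabla u_\epsilon\cdot\nabla u_\epsilon \ge \lambda(\epsilon^2|\nabla_{X_1}u_\epsilon|^2 + |\nabla_{X_2}u_\epsilon|^2)$ together with a quadratic-expansion trick: consider $\int_\Omega A_\epsilon(\nabla u_\epsilon - \Pi)\cdot(\nabla u_\epsilon - \Pi)\diff x \ge 0$ where $\Pi = (0,\nabla_{X_2}u)$, expand, and pass to the limit using the weak convergences and the $\limsup$ bound to conclude that $\epsilon\nabla_{X_1}u_\epsilon\to0$ in $L^2(\Omega)^q$ and $\nabla_{X_2}u_\epsilon\to\nabla_{X_2}u$ in $L^2(\Omega)^{N-q}$. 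Since this whole argument is cited as Proposition 4 of \cite{ogabi1}, I would either reproduce this energy argument in detail or simply invoke that reference; the delicate points are the compact embedding $H_0^1(\Omega;\omega_2)\hookrightarrow\hookrightarrow L^2(\Omega)$ justifying the strong $L^2$ convergence of $u_\epsilon$ (needed to handle the nonlinear term $\beta$) and the careful bookkeeping of the $\epsilon$-weighted cross terms in the energy identity.
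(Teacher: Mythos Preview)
Your overall strategy is right, but there is a genuine gap in Step~2: the embedding $H_0^1(\Omega;\omega_2)\hookrightarrow L^2(\Omega)$ is \emph{not} compact. For a concrete counterexample, take $\omega_1=\omega_2=(0,\pi)$ and $u_n(x_1,x_2)=\sin(nx_1)\sin(x_2)$; then $\|\nabla_{X_2}u_n\|_{L^2(\Omega)}$ is bounded while $(u_n)$ has no $L^2$-convergent subsequence. Consequently you cannot obtain $u_\epsilon\to u$ strongly in $L^2(\Omega)$ at this stage, and hence you cannot identify $\beta(u_\epsilon)\to\beta(u)$ before the energy step. This in turn breaks your Step~3 as written, since you invoke the ``product of strong and strong $L^2$ convergence'' to pass to the limit in $\int_\Omega\beta(u_\epsilon)u_\epsilon\,dx$.

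The fix is to reorder the argument so that the strong $L^2$ convergence is an \emph{output} of the energy identity rather than an input. In Step~2, extract only weak limits: $u_\epsilon\rightharpoonup u$, $\nabla_{X_2}u_\epsilon\rightharpoonup\nabla_{X_2}u$, $\epsilon\nabla_{X_1}u_\epsilon\rightharpoonup 0$, and $\beta(u_\epsilon)\rightharpoonup v$ for some (as yet unidentified) $v\in L^2(\Omega)$; pass to the limit in \eqref{probcontweak} to obtain the limit equation with $v$ in place of $\beta(u)$. In Step~3, enlarge your quadratic-expansion quantity by adding the monotone term:
\[
I_\epsilon:=\int_\Omega A_\epsilon\bigl(\nabla u_\epsilon-(0,\nabla_{X_2}u)\bigr)\cdot\bigl(\nabla u_\epsilon-(0,\nabla_{X_2}u)\bigr)\,dx
+\int_\Omega\bigl(\beta(u_\epsilon)-\beta(u)\bigr)(u_\epsilon-u)\,dx\ \ge 0,
\]
the nonnegativity coming from ellipticity and \eqref{hypBeta1}. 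Expand, substitute the weak formulation tested with $u_\epsilon$, and use only weak convergences (in particular test the limit equation with $\varphi=u_\epsilon\in H_0^1(\Omega)$ and let $\epsilon\to 0$) to conclude $I_\epsilon\to 0$. Coercivity then gives $\epsilon\nabla_{X_1}u_\epsilon\to 0$ and $\nabla_{X_2}u_\epsilon\to\nabla_{X_2}u$ strongly; Poincar\'e's inequality \eqref{sob} yields $u_\epsilon\to u$ in $L^2(\Omega)$, and only then do you identify $v=\beta(u)$ via continuity of $\beta$ and \eqref{hypBeta2}. The paper does not prove Theorem~\ref{thrmconvergence} itself (it is quoted from \cite{ogabi1}), but this reordered argument is exactly the one the paper carries out in Appendix~\ref{AppendixC} for the closely related existence result.
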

Remark that for $\varphi \in H_{0}^{1}(\Omega ;\omega _{2})$, and for a.e $%
X_{1}$ in $\omega _{1}$ we have $\varphi (X_{1},\cdot )\in H_{0}^{1}(\omega
_{2})$. By testing with $\varphi (X_{1},\cdot )$ in (\ref{probcontlimit}) and by integrating over $%
\omega _{1}$ we get
\begin{equation}
\int_{\Omega }\beta (u)\varphi dx+\int_{\Omega }A_{22}\nabla _{X_{2}}u\cdot
\nabla _{X_{2}}\varphi dx=\int_{\Omega }f\text{ }\varphi dx,\text{\ }\forall
\varphi \in H_{0}^{1}(\Omega ;\omega _{2}).  \label{probcontlimitomega}
\end{equation}
This paper is organized as follows:
\begin{itemize}
\item As a first main result, we will prove the asymptotic preserving of the general Galerkin method for the elliptic problem (\ref{u:probcont})-(\ref{u:probbound}). This concept has been introduced by S. Jin in \cite{Sin} and it could be illustrated by
the following commutative diagram%
\begin{equation*}
\begin{array}{c}
\text{ }P_{\epsilon ,n}\underrightarrow{\text{ \ \ \ \ }^{{\small %
n\rightarrow \infty }\text{ \ }}}\text{\ \ \ \ }P_{\epsilon } \\ 
\left. ^{\substack{  \\ {\small \epsilon \rightarrow 0}}}\right\downarrow 
\text{ \ \ \ \ \ \ \ \ \ \ \ \ \ \ \ \ \ }\left\downarrow ^{\substack{  \\ %
\epsilon \rightarrow 0}}\right. \text{\ } \\ 
P_{n}\text{ }\underrightarrow{\text{ \ \ \ }^{{\small n\rightarrow \infty }%
\text{ \ \ \ }}}\text{\ \ \ }P_{0}%
\end{array}%
,
\end{equation*}%
here, $P_{\epsilon ,n}$ is the Galerkin approximation of the infinite dimensional perturbed problem $P_{\epsilon }$, and $P_{n}$ is the Galerkin approximation of the infinite dimensional limit problem $P_{0}$. We will derive an estimation of the error for a general Galerkin method, and by using a C\'{e}a type lemmas we prove the asymptotic-preserving of the method.

\item As a second main result, we will prove, in the linear case, a new
result on the estimation of the global convergence rate, such a result is of
the form $\left\Vert \nabla _{X_{2}}(u_{\epsilon }-u)\right\Vert
_{L^{2}(\Omega )^{N-q}}\leq C\epsilon .$ This estimation is an improvement
of the local one proved by Chipot and Guesmia in \cite{chip-guesm2}. Our
arguments are based on the use of a particular Galerkin approximation
constructed by a tensor product.

\item In section 4, we will prove our third main result on the asymptotic behavior of the semigroup generated by the perturbed elliptic operator $\text{div}(A_{\epsilon }\nabla
\cdot ),$ and we will give a simple application to linear parabolic problems.
\end{itemize}

Finally, to make the paper readable, we put some intermediate technical
lemmas in the appendix.

\section{Main theorems for the elliptic problem}

\begin{deff}
\label{def-approximanteHilbert}Let $(V_{n})$ be a sequence of finite
dimensional subspaces of a Hilbert space $H$. We say that $(V_{n})$
approximates $H$, if for every $w\in H.$ 
\begin{equation*}
\underset{v\in V_{n}}{\inf }\left\Vert w-v\right\Vert _{H}\longrightarrow 0%
\text{ as }n\rightarrow \infty.
\end{equation*}
\end{deff}

For a sequence $(V_{n})$ of a finite dimensional spaces of $H_{0}^{1}(\Omega
)$, and for every $\epsilon \in (0,1]$ and $n\in 
\mathbb{N}
$, we denote $u_{\epsilon ,n\text{ }}$ the unique solution of%
\begin{equation}
\left\{ 
\begin{array}{l}
\int_{\Omega }\beta (u_{\epsilon ,n})\varphi dx+\int_{\Omega }A_{\epsilon
}\nabla u_{\epsilon ,n}\cdot \nabla \varphi dx=\int_{\Omega }f\text{ }%
\varphi dx\text{, }\forall \varphi \in V_{n}\text{. } \\ 
u_{\epsilon ,n}\in V_{n}\text{.}%
\end{array}%
\right.   \label{Galerkin0}
\end{equation}%
We suppose that 
\begin{equation}
\partial _{x_{i}}a_{ij}\in L^{\infty }(\Omega ),\partial _{x_{j}}a_{ij}\in
L^{\infty }(\Omega )\text{ for }i=1,...,q\text{ and }j=q+1,...,N.
\label{hypAd1}
\end{equation}%
We have the following:

\begin{thm}
\label{mainthm0}Let $\Omega =\omega _{1}\times \omega _{2}$ where $\omega
_{1}$ and $\omega _{2}$ are two bounded open sets of $%
\mathbb{R}
^{q}$ and $%
\mathbb{R}
^{N-q}$ respectively, with $N>q\geq 1$. Suppose that $f\in L^{2}(\Omega )$ and assume $(%
\ref{hypA1}),(\ref{hypA2}),(\ref{hypBeta1})$, $(\ref{hypBeta2})$, and $(\ref{hypAd1})$. Let $%
(V_{n})$ be a sequence of finite dimensional spaces of $H_{0}^{1}(\Omega )$
which approximates it in the sense of Definition \ref%
{def-approximanteHilbert}. Let $(u_{\epsilon ,n})$ be the sequence
of solutions of $(\ref{Galerkin0})$ then we have: 
\begin{equation*}
\underset{_{\epsilon }}{\lim }(\underset{n}{\lim }u_{\epsilon ,n})=\underset{%
n}{\lim }(\underset{_{\epsilon }}{\lim }u_{\epsilon ,n})=u,\text{ in }%
H_{0}^{1}(\Omega ;\omega _{2}),
\end{equation*}%
where $u$ is the unique solution of $(\ref{probcontlimit})$ in $%
H_{0}^{1}(\Omega ;\omega _{2}).$
\end{thm}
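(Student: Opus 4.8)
The plan is to establish separately the four one-parameter limits appearing in the commutative diagram of the introduction: for fixed $\epsilon$, $u_{\epsilon,n}\to u_\epsilon$ in $H_0^1(\Omega)$ as $n\to\infty$; for fixed $n$, $u_{\epsilon,n}\to u_n$ in $V_n$ as $\epsilon\to0$, where $u_n\in V_n$ is the Galerkin solution of the limit problem; $u_\epsilon\to u$ in $H_0^1(\Omega;\omega_2)$ as $\epsilon\to0$, which is exactly Theorem~\ref{thrmconvergence}; and $u_n\to u$ in $H_0^1(\Omega;\omega_2)$ as $n\to\infty$. No joint estimate in $(\epsilon,n)$ is required. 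The common starting point is the uniform a priori bound: testing $(\ref{Galerkin0})$ with $\varphi=u_{\epsilon,n}$, using $\beta(s)s\ge0$ from $(\ref{hypBeta1})$, the inequality $A_\epsilon\xi\cdot\xi=A(\epsilon\xi_1,\xi_2)\cdot(\epsilon\xi_1,\xi_2)\ge\lambda(\epsilon^2|\xi_1|^2+|\xi_2|^2)$ from $(\ref{hypA1})$, Cauchy--Schwarz, and Poincar\'e's inequality $(\ref{sob})$, one gets
\[
\epsilon\,\|\nabla_{X_1}u_{\epsilon,n}\|_{L^2(\Omega)^q}+\|\nabla_{X_2}u_{\epsilon,n}\|_{L^2(\Omega)^{N-q}}+\|u_{\epsilon,n}\|_{L^2(\Omega)}\le C\|f\|_{L^2(\Omega)},
\]
with $C$ independent of $\epsilon\in(0,1]$ and of $n$; by $(\ref{hypBeta2})$ the family $(\beta(u_{\epsilon,n}))$ is then bounded in $L^2(\Omega)$. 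The same test run on $(\ref{probcontweak})$, on $(\ref{probcontlimitomega})$ and on the finite-dimensional problem $\int_\Omega\beta(u_n)\varphi+\int_\Omega A_{22}\nabla_{X_2}u_n\cdot\nabla_{X_2}\varphi=\int_\Omega f\varphi$ for all $\varphi\in V_n$ yields the corresponding bounds for $u_\epsilon$, for $u$ and for $u_n$; existence and uniqueness of the latter follow from strict monotonicity of the associated operator on the finite-dimensional space $V_n$, whose principal part is coercive by $(\ref{sob})$ (note that $\|\nabla_{X_2}(\cdot)\|$ is a genuine norm on $H_0^1(\Omega)$, hence on $V_n$).

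For the limit in $n$ at fixed $\epsilon$, since $V_n\subset H_0^1(\Omega)$ the solution $u_\epsilon$ also satisfies $(\ref{probcontweak})$ tested against $V_n$. Subtracting this from $(\ref{Galerkin0})$, testing the difference with $u_{\epsilon,n}-v_n$ for arbitrary $v_n\in V_n$, splitting $u_{\epsilon,n}-v_n=(u_{\epsilon,n}-u_\epsilon)+(u_\epsilon-v_n)$, discarding the nonnegative monotonicity term $\int_\Omega(\beta(u_{\epsilon,n})-\beta(u_\epsilon))(u_{\epsilon,n}-u_\epsilon)$, and using $A_\epsilon\nabla w\cdot\nabla w\ge\lambda\epsilon^2|\nabla w|^2$ together with Young's inequality, I obtain a C\'ea-type bound
\[
\|\nabla(u_{\epsilon,n}-u_\epsilon)\|_{L^2(\Omega)^N}^2\le C_\epsilon\Big(\|\nabla(u_\epsilon-v_n)\|_{L^2(\Omega)^N}^2+\|u_\epsilon-v_n\|_{L^2(\Omega)}\Big),\qquad v_n\in V_n,
\]
in which $C_\epsilon$ absorbs the uniform bound on $\|\beta(u_{\epsilon,n})-\beta(u_\epsilon)\|_{L^2(\Omega)}$. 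Taking $v_n\to u_\epsilon$ in $H_0^1(\Omega)$, which is possible by Definition~\ref{def-approximanteHilbert}, forces the right-hand side to $0$, so $u_{\epsilon,n}\to u_\epsilon$ in $H_0^1(\Omega)$ and a fortiori in $H_0^1(\Omega;\omega_2)$. Combined with Theorem~\ref{thrmconvergence}, this gives $\lim_\epsilon(\lim_n u_{\epsilon,n})=u$.

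For the limit in $\epsilon$ at fixed $n$, the uniform bound places $(u_{\epsilon,n})_\epsilon$ in a bounded subset of the finite-dimensional space $V_n$; along any sequence $\epsilon_k\to0$ extract $u_{\epsilon_k,n}\to z$ strongly in $V_n$. Passing to the limit in $(\ref{Galerkin0})$ against a fixed $\varphi\in V_n$, every block of $A_\epsilon$ other than the $A_{22}$-block carries a factor $\epsilon$ or $\epsilon^2$ multiplying an $L^2$-convergent gradient and hence drops out, while $\beta(u_{\epsilon_k,n})\to\beta(z)$ in $L^2(\Omega)$ by continuity of $\beta$, strong convergence in $V_n$, and the equi-integrability supplied by $(\ref{hypBeta2})$; thus $z=u_n$ by uniqueness and the whole family converges, $u_{\epsilon,n}\to u_n$ in $V_n\subset H_0^1(\Omega;\omega_2)$. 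There remains $u_n\to u$ in $H_0^1(\Omega;\omega_2)$. I first check that $(V_n)$ approximates $H_0^1(\Omega;\omega_2)$: given $w$ there and $\delta>0$, the density of $H_0^1(\Omega)$ in $H_0^1(\Omega;\omega_2)$ (by mollification in the $X_1$ variable followed by a cut-off in $X_1$, only $\|\nabla_{X_2}(\cdot)\|$ having to be controlled) provides $\tilde w\in H_0^1(\Omega)$ with $\|\nabla_{X_2}(w-\tilde w)\|\le\delta/2$, and Definition~\ref{def-approximanteHilbert} then provides, for $n$ large, $v\in V_n$ with $\|\nabla(\tilde w-v)\|_{L^2(\Omega)^N}\le\delta/2$, whence $\|\nabla_{X_2}(w-v)\|\le\delta$. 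Repeating the C\'ea argument above with $(\ref{probcontlimitomega})$ in place of $(\ref{probcontweak})$ — the coercivity constant now being $\lambda$ on $H_0^1(\Omega;\omega_2)$ by $(\ref{sob})$ — and choosing $v_n\to u$ in $H_0^1(\Omega;\omega_2)$ gives $u_n\to u$ in $H_0^1(\Omega;\omega_2)$, using once more that $\|\beta(u_n)-\beta(u)\|_{L^2(\Omega)}$ is bounded and $\|u-v_n\|_{L^2(\Omega)}\to0$. Hence $\lim_n(\lim_\epsilon u_{\epsilon,n})=u$, which together with the second step proves the theorem.

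The main obstacle I anticipate is not the nonlinearity: the sublinear growth $(\ref{hypBeta2})$ keeps $\beta$ of a bounded family bounded in $L^2(\Omega)$, which is precisely what neutralizes $\beta$ in all three C\'ea estimates and makes the passage to the limit in $\epsilon$ work. The real difficulty is that the approximation property of $(V_n)$ must be transferred from $H_0^1(\Omega)$ to the coarser space $H_0^1(\Omega;\omega_2)$, i.e.\ the density of $H_0^1(\Omega)$ in $H_0^1(\Omega;\omega_2)$, which is where the anisotropic structure of the function spaces genuinely enters. I note, finally, that the regularity assumption $(\ref{hypAd1})$ on the mixed coefficients is not used in the argument above; it is what is needed to turn the C\'ea estimates into quantitative rates, in the spirit of the paper's second main result.
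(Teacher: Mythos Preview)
Your proof is correct. The overall architecture (four one-parameter limits, C\'ea-type estimates, density of $H_0^1(\Omega)$ in $H_0^1(\Omega;\omega_2)$) matches the paper, but for the limit $\epsilon\to0$ at fixed $n$ you take a genuinely different route. The paper proves a quantitative estimate (its Lemma~\ref{lem:errorest}),
\[
\|\nabla_{X_2}(u_{\epsilon,n}-u_n)\|_{L^2(\Omega)^{N-q}}\le \epsilon\big(C_1\|\nabla_{X_1}u_n\|_{L^2(\Omega)^q}+C_2\|f\|_{L^2(\Omega)}\big),
\]
obtained by an integration-by-parts manipulation of the cross-term $\epsilon\int_\Omega A_{12}\nabla_{X_2}u_n\cdot\nabla_{X_1}(u_{\epsilon,n}-u_n)$; this is where assumption $(\ref{hypAd1})$ enters. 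You bypass this entirely by exploiting finite-dimensionality of $V_n$: equivalence of norms turns the $\epsilon$-uniform bound on $\|\nabla_{X_2}u_{\epsilon,n}\|$ into compactness in $V_n$, and the limit identification is soft. Your observation that $(\ref{hypAd1})$ is not actually needed for Theorem~\ref{mainthm0} is therefore correct. What the paper's approach buys is the explicit rate in $\epsilon$, with constants independent of $V$, which it reuses immediately (Corollary~\ref{corollaire-errorglobal}) and which drives the proof of the $O(\epsilon)$ estimate in Theorem~\ref{mainthm}; your compactness argument gives convergence only, with no rate, and the implicit constants depend on $n$ through the norm equivalence on $V_n$.
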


Our second result concerns the estimation of the rate of convergence for problem (\ref{probcontweak}) in the linear case, this result
could be seen as a refinement of the following result proved in \cite{chip-guesm2}
:%
\begin{equation}
\forall\omega _{1}^{\prime }\subset \subset \text{ }\omega
_{1} \text{ open}:\left\Vert \nabla _{X_{2}}(u_{\epsilon }-u)\right\Vert _{L^{2}(\omega
_{1}^{\prime }\times \omega _{2})}=O(\epsilon ),\text{ and }\left\Vert
\nabla _{X_{1}}(u_{\epsilon }-u)\right\Vert _{L^{2}(\omega _{1}^{\prime
}\times \omega _{2})}=O(1).  \label{interiorestim}
\end{equation}%
In the above reference, the authors have supposed that 
\begin{equation}
\nabla _{X_{1}}f\in L^{2}(\Omega )^{q},  \label{hypFad1}
\end{equation}%
assumption (\ref{hypAd1}), and that $\nabla _{X_{1}}A_{22}\in L^{\infty }(\Omega )$.
Our contribution consists in extending (\ref{interiorestim}) to the whole
domain $\Omega $, to obtain such a result we take some additional hypothesis
on $A$ and $f$, namely:%
\begin{equation}
\text{For a.e. }X_{2}\in \omega _{2}:f(\cdot ,X_{2})\in H_{0}^{1}(\omega
_{1}),  \label{hypFad2}
\end{equation}%
and 
\begin{equation}
\text{ The block }A_{22}\text{ depends only on }X_{2}.  \label{hypAd2}
\end{equation}

\begin{thm}
\label{mainthm} Let $\Omega =\omega _{1}\times \omega _{2}$ where $\omega
_{1}$ and $\omega _{2}$ are two bounded open sets of $%
\mathbb{R}
^{q}$ and $%
\mathbb{R}
^{N-q}$ respectively, with $N>q\geq 1.$ Suppose that $\beta =0,$ and let us assume that $%
A $ satisfies $(\ref{hypA1})$, $(\ref{hypA2})$, $(\ref{hypAd1})$ and $(\ref%
{hypAd2})$. Let $f\in L^{2}(\Omega )$ such that $(\ref{hypFad1})$ and $(\ref%
{hypFad2}),$ then there exists $C>0$ depending on $f$, $\lambda $, $C_{\omega
_{2}}$ and $A$ such that: 
\begin{equation*}
\forall \epsilon \in (0,1]:\left\Vert \nabla _{X_{2}}(u_{\epsilon
}-u)\right\Vert _{L^{2}(\Omega )^{N-q}}\leq C\epsilon ,
\end{equation*}%
where $u_{\epsilon }$ is the unique solution of ($\ref{probcontweak}$) in $%
H_{0}^{1}(\Omega )$ and $u$ is the unique solution to $(\ref{probcontlimit})$
in $H_{0}^{1}(\Omega ;\omega _{2}).$ Moreover, we have: 
\begin{equation*}
u\in H_{0}^{1}(\Omega )\text{ and }\nabla _{X_{1}}(u_{\epsilon
}-u)\rightharpoonup 0\text{ weakly in }L^{2}(\Omega )^{q}, \text{as } \epsilon\rightarrow 0.
\end{equation*}
The constant $C$ is of the form $C_{1}\left\Vert \nabla _{X_{1}}f\right\Vert
_{L^{2}(\Omega )^{q}}+C_{2}\left\Vert f\right\Vert _{L^{2}(\Omega )}$ where $%
C_{1},C_{2}$ are dependent on $A,\lambda ,C_{\omega _{2}}.$
\end{thm}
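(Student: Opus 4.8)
The plan is to exploit the assumptions \eqref{hypFad1}--\eqref{hypFad2} to show that $\partial_{x_i} u_\epsilon$, for $i=1,\dots,q$, is controlled, and then to use $\nabla_{X_1} u_\epsilon$ as a test function (in a suitable difference quotient or weak sense) to get the $O(\epsilon)$ rate. Concretely, since $\beta=0$ the perturbed problem is linear: $-\div(A_\epsilon \nabla u_\epsilon)=f$. The first step is a uniform estimate: testing \eqref{probcontweak} with $u_\epsilon$ and using \eqref{hypA1} gives the standard bounds $\|\epsilon\nabla_{X_1}u_\epsilon\|_{L^2}+\|\nabla_{X_2}u_\epsilon\|_{L^2}\le C\|f\|_{L^2}$, which already yields (via Poincar\'e \eqref{sob}) $\|u_\epsilon\|_{L^2}\le C\|f\|_{L^2}$.

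The key step is to differentiate the equation in the $X_1$ variables. Because $A_{22}$ depends only on $X_2$ by \eqref{hypAd2}, and because $f(\cdot,X_2)\in H^1_0(\omega_1)$ by \eqref{hypFad2} together with $\nabla_{X_1}f\in L^2$ by \eqref{hypFad1}, one can justify (rigorously via translations/difference quotients in $X_1$, staying inside $\omega_1$ is not an issue since $f$ vanishes on $\partial\omega_1$ and the operator in the $X_2$ block has no $X_1$-dependence) that $v_\epsilon:=\partial_{x_k}u_\epsilon$ (for $1\le k\le q$) solves a problem of the same type with right-hand side $\partial_{x_k}f$ plus commutator terms coming from $\nabla_{X_1}A_{11}$, $\nabla_{X_1}A_{12}$, $\nabla_{X_1}A_{21}$ — these are all in $L^\infty$ by \eqref{hypAd1} — acting on the already-bounded quantities $\epsilon\nabla_{X_1}u_\epsilon$, $\nabla_{X_2}u_\epsilon$. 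Testing this differentiated equation with $v_\epsilon$ itself and using ellipticity gives a bound of the form $\|\epsilon\nabla_{X_1}\partial_{x_k}u_\epsilon\|_{L^2}+\|\nabla_{X_2}\partial_{x_k}u_\epsilon\|_{L^2}\le C(\|\nabla_{X_1}f\|_{L^2}+\|f\|_{L^2})$, hence in particular $\|\nabla_{X_2}\nabla_{X_1}u_\epsilon\|_{L^2}\le C$ uniformly in $\epsilon$; combined with Poincar\'e this gives $\|\nabla_{X_1}u_\epsilon\|_{L^2}\le C$ uniformly, which is the source of the statement $u\in H^1_0(\Omega)$ and $\nabla_{X_1}(u_\epsilon-u)\rightharpoonup 0$ in $L^2(\Omega)^q$ (pass to a weak limit, identify it with $\nabla_{X_1}u$ using Theorem \ref{thrmconvergence}).

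For the rate itself, subtract: using \eqref{probcontweak} and \eqref{probcontlimitomega} with a common test function $\varphi\in H^1_0(\Omega)\subset H^1_0(\Omega;\omega_2)$, one obtains for $w_\epsilon:=u_\epsilon-u$ the identity
\begin{equation*}
\int_\Omega A_{22}\nabla_{X_2}w_\epsilon\cdot\nabla_{X_2}\varphi\,dx = -\int_\Omega\big(\epsilon^2 A_{11}\nabla_{X_1}u_\epsilon\cdot\nabla_{X_1}\varphi+\epsilon A_{12}\nabla_{X_2}u_\epsilon\cdot\nabla_{X_1}\varphi+\epsilon A_{21}\nabla_{X_1}u_\epsilon\cdot\nabla_{X_2}\varphi\big)dx.
\end{equation*}
One would like to choose $\varphi=w_\epsilon$, but $u$ need not a priori be in $H^1_0(\Omega)$ — this is the main obstacle, and it is resolved precisely by the uniform $H^1$ bound on $u_\epsilon$ established above, which forces $u\in H^1_0(\Omega)$ so that $w_\epsilon$ is an admissible test function. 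Taking $\varphi=w_\epsilon$, the left side is $\gtrsim\lambda\|\nabla_{X_2}w_\epsilon\|_{L^2}^2$ by \eqref{hypA1}; on the right side every term carries at least one power of $\epsilon$ and pairs a uniformly bounded factor ($\epsilon\nabla_{X_1}u_\epsilon$, $\nabla_{X_2}u_\epsilon$, or $\nabla_{X_1}u_\epsilon$ — the last being bounded by the differentiated estimate) with either $\nabla_{X_2}w_\epsilon$ or $\nabla_{X_1}w_\epsilon$ (also uniformly bounded). Absorbing the $\nabla_{X_2}w_\epsilon$ terms by Young's inequality into the left side and estimating the remaining $\nabla_{X_1}w_\epsilon$ terms by their uniform bound yields $\lambda\|\nabla_{X_2}w_\epsilon\|_{L^2}^2\le C\epsilon\|\nabla_{X_2}w_\epsilon\|_{L^2}+C\epsilon$, hence $\|\nabla_{X_2}w_\epsilon\|_{L^2}\le C\epsilon$. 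Tracking the constants through Young's inequality and the a priori estimates shows $C$ has the claimed form $C_1\|\nabla_{X_1}f\|_{L^2}+C_2\|f\|_{L^2}$ with $C_1,C_2$ depending only on $A,\lambda,C_{\omega_2}$. I expect the genuinely delicate point to be the rigorous justification of differentiating in $X_1$ (the difference-quotient argument and the identification of the commutator terms), rather than the final energy estimate, which is routine once $u\in H^1_0(\Omega)$ is known.
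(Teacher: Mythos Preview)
Your approach has two genuine gaps, one in each of the two main steps.

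\textbf{The difference-quotient step.} You propose to differentiate the \emph{perturbed} problem in $x_k$ ($k\le q$) and test with $\partial_{x_k}u_\epsilon$. But $D_h^k u_\epsilon\notin H_0^1(\Omega)$: the translated function does not vanish on the part of $\partial\omega_1\times\omega_2$ where mass has been pushed in, so it is not an admissible global test function in \eqref{probcontweak}. The standard cure (cutoffs in $X_1$) recovers only the \emph{interior} estimate --- which is precisely the Chipot--Guesmia result \eqref{interiorestim} the theorem is meant to improve. Also note that the commutators you need involve $\partial_{x_k}A_{11}$ and $\partial_{x_k}A_{21}$ for all $k\le q$, which are \emph{not} provided by \eqref{hypAd1}; that hypothesis concerns only the $A_{12}$ block and only the specific derivatives $\partial_{x_i}a_{ij}$, $\partial_{x_j}a_{ij}$.

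The paper circumvents this by never differentiating the perturbed problem. It introduces a tensor-product Galerkin space $V=V_1\otimes V_2$ and applies the difference quotient to the \emph{limit} problem \eqref{galerkin-tensor}. Because $A_{22}$ depends only on $X_2$ (assumption \eqref{hypAd2}), that problem decouples into a family of equations on $\omega_2$ parametrised by $X_1$; for each fixed $X_1$ the translated difference $\tau_h u_{V,f}(X_1,\cdot)-u_{V,f}(X_1,\cdot)$ lies in $V_2$ and is a legitimate test function. This yields $\|\nabla_{X_1}u_{V,f}\|\le C_3\|\nabla_{X_1}f\|$ uniformly in $V$ (Lemma~\ref{estgradu}), and one then passes to the limit $V\nearrow H_0^1(\Omega)$ via C\'ea and a density argument in $H_0^1(\Omega;\omega_1)$.

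\textbf{The final energy estimate.} Even granting $u\in H_0^1(\Omega)$ and $\|\nabla_{X_1}u_\epsilon\|$ bounded, your estimate as written only yields $O(\sqrt\epsilon)$, not $O(\epsilon)$. The culprit is the term $\epsilon\int_\Omega A_{12}\nabla_{X_2}u_\epsilon\cdot\nabla_{X_1}w_\epsilon$: both factors are merely bounded, so this contributes a naked $C\epsilon$ to the right-hand side, and from $\lambda\|\nabla_{X_2}w_\epsilon\|^2\le C\epsilon\|\nabla_{X_2}w_\epsilon\|+C\epsilon$ you only get $\|\nabla_{X_2}w_\epsilon\|\le C\sqrt\epsilon$. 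The fix (used in the paper's Lemma~\ref{lem:errorest}) is to integrate by parts in the $A_{12}$ term using \eqref{hypAd1}, swapping $\partial_{x_i}$ and $\partial_{x_j}$ so that every resulting term carries a factor $\|\nabla_{X_2}w_\epsilon\|$ (via Poincar\'e) or a factor $\|\nabla_{X_1}u\|$ paired with $\|\nabla_{X_2}w_\epsilon\|$; then Young's inequality gives the genuine $O(\epsilon)$ rate.
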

The proof of this theorem will be done in two steps. First, we give the
proof in the case $f\in H_{0}^{1}(\omega _{1})\otimes H_{0}^{1}(\omega _{2})$%
, and next that we conclude by a density argument. Let us recall this density rule, which will be used throughout this article: If $(E,\tau )$ and 
$(F,\tau ^{\prime })$ are two topological spaces such that $E\subset F$, and 
$E$ is dense in $F$ and the canonical injection $E\rightarrow F$ is
continuous, then every dense subset in $(E,\tau )$ is dense in $(F,\tau
^{\prime }).$

\begin{rmk}
The hypothesis $(\ref{hypFad2})$ is necessary to obtain the global
boundedness of $\nabla _{X_{1}}(u_{\epsilon }-u).$ We can observe that
through this 2d example, we take 
\begin{equation*}
A=id_{2}\text{, }f:(x_{1},x_{2})\longmapsto \cos (x_{1})\sin (x_{2}),\text{
and }\Omega =(0,\pi )\times (0,\pi ).
\end{equation*}%
In this case, we have $u(x_{1},x_{2})=\cos (x_{1})\sin (x_{2}).$ The
quantity $\left\Vert \nabla _{X_{1}}(u_{\epsilon }-u)\right\Vert
_{L^{2}(\Omega )^{q}}$ could not be bounded. Indeed, if we suppose the
converse then according to Theorem \ref{thrmconvergence} there exists a subsequence still labeled $(u_{\epsilon })$ such
that $\nabla _{X_{1}}(u_{\epsilon }-u)\rightharpoonup 0$ weakly in $%
L^{2}(\Omega )^{q},$ and $\left\Vert \nabla _{X_{2}}(u_{\epsilon }-u)\right\Vert _{L^{2}(\Omega
)^{N-q}}\rightarrow 0.$ Whence $u\in H_{0}^{1}(\Omega )$ which is a
contradiction.
\end{rmk}

Let us finish by giving this remark which will be used later in section 4.

\begin{rmk}
\label{rem-semigroup-importante}Suppose that $\beta :s\longmapsto \mu s$,
for some $\mu >0$, and suppose that assumptions of Theorem \ref{mainthm}
hold, then we have the same results of Theorem \ref{mainthm} with the same constants. Assume, in
addition, that the block $A_{12}$ satisfies the following: 
\begin{equation}
\partial _{x_{i}x_{j}}^{2}a_{ij}\in L^{2}(\Omega )\text{, for }i=1,...,q%
\text{, }j=q+1,...,N,  \label{hypA12-seconde}
\end{equation}%
then we have:%
\begin{equation*}
\forall\epsilon\in(0,1]:\left\Vert \nabla_{X_{2}}(u_{\epsilon }-u)\right\Vert _{L^{2}(\Omega )^{N-q}}\leq \frac{\epsilon }{%
\mu }\left( C_{1}^{\prime }\left\Vert \nabla _{X_{1}}f\right\Vert
_{L^{2}(\Omega )^{q}}+C_{2}^{\prime }\left\Vert f\right\Vert _{L^{2}(\Omega
)}\right),
\end{equation*}%
where $C_{1}^{\prime },C_{2}^{\prime }$ are only dependent on $A,\lambda
,C_{\omega _{2}}.$
\end{rmk}

\section{ The Analysis of a general Galerkin method}

\subsection{Preliminaries}

Let $V\subset H_{0}^{1}(\Omega )$ be a closed subspace of $H_{0}^{1}(\Omega
,\omega _{2}).$ Notice that $V$ is closed in $H_{0}^{1}(\Omega )$, thanks to
the continuous embedding $H_{0}^{1}(\Omega )\hookrightarrow H_{0}^{1}(\Omega
,\omega _{2}).$ Let $f\in L^{2}(\Omega )$, we denote by $u_{\epsilon ,V,f}$
the unique solution of 
\begin{equation}
\left\{ 
\begin{array}{l}
\int_{\Omega }\beta (u_{\epsilon ,V,f})\varphi dx+\int_{\Omega }A_{\epsilon
}\nabla u_{\epsilon ,V,f}\cdot \nabla \varphi dx=\int_{\Omega }f\text{ }%
\varphi dx\text{, }\forall \varphi \in V\text{\ \ \ \ \ } \\ 
u_{\epsilon ,V,f}\in V\text{.}%
\end{array}%
\right.  \label{Galerkin}
\end{equation}%
We denote by $u_{V,f}$ the unique solution of 
\begin{equation}
\left\{ 
\begin{array}{l}
\int_{\Omega }\beta (u_{V,f})\varphi dx+\int_{\Omega }A_{22}\nabla
_{X_{2}}u_{V,f}\cdot \nabla _{X_{2}}\varphi dx=\int_{\Omega }f\text{ }%
\varphi dx\text{, }\forall \varphi \in V\text{\ \ \ } \\ 
u_{V,f}\in V\text{.}%
\end{array}%
\right.  \label{Galerkin-limit}
\end{equation}%
Under assumptions of Theorem \ref{thrmconvergence}, one can prove by using the Schauder fixed point theorem that $u_{\epsilon ,V,f}$ exists. For the existence of $u_{V,f}$ see Appendix \ref{AppendixC}. The
uniqueness, for the two problems, follows immediately from (\ref{hypA1}) and
(\ref{hypBeta1})$.$ Now, let us begin by some preliminary lemmas

\begin{lem}
\label{estbasic} Under assumptions of Theorem \ref{thrmconvergence} and for
any $\epsilon \in (0,1]$, we have the following bounds: 
\begin{equation}
\left\Vert \nabla u_{\epsilon ,V,f}\right\Vert _{L^{2}(\Omega
)^{N}}\leq \frac{C_{\Omega}\left\Vert f\right\Vert _{L^{2}(\Omega )}}{%
\lambda \epsilon^{2} }, \text{and } \left\Vert \nabla u_{\epsilon ,f}\right\Vert _{L^{2}(\Omega
)^{N}}\leq \frac{C_{\Omega}\left\Vert f\right\Vert _{L^{2}(\Omega )}}{%
\lambda \epsilon^{2} }.
 \label{bound1}
\end{equation}%
\begin{equation}
\left\Vert \nabla _{X_{2}}u_{V,f}\right\Vert _{L^{2}(\Omega )^{N-q}}\leq 
\frac{C_{\omega _{2}}\left\Vert f\right\Vert _{L^{2}(\Omega )}}{\lambda }, \text{and } \left\Vert \nabla _{X_{2}}u_{f}\right\Vert _{L^{2}(\Omega )^{N-q}}\leq \frac{%
C_{\omega _{2}}\left\Vert f\right\Vert _{L^{2}(\Omega )}}{\lambda }.
\label{bound2}
\end{equation}%
\begin{equation}
\left\Vert \beta (u_{\epsilon ,V,f})\right\Vert _{L^{2}(\Omega )}\leq
\frac{M}{\epsilon^2}\left( \left\vert \Omega \right\vert ^{\frac{1}{2}}+\frac{C_{\Omega
}^{2}\left\Vert f\right\Vert _{L^{2}(\Omega )}}{\lambda }\right), \text{and } \left\Vert \beta (u_{\epsilon,f})\right\Vert _{L^{2}(\Omega )}\leq
\frac{M}{\epsilon^2}\left( \left\vert \Omega \right\vert ^{\frac{1}{2}}+\frac{C_{\Omega
}^{2}\left\Vert f\right\Vert _{L^{2}(\Omega )}}{\lambda }\right).
\label{bound3}
\end{equation}%
\begin{equation}
\left\Vert \beta (u_{V,f})\right\Vert _{L^{2}(\Omega )}\leq M\left(
\left\vert \Omega \right\vert ^{\frac{1}{2}}+\frac{C_{\omega
_{2}}^{2}\left\Vert f\right\Vert _{L^{2}(\Omega )}}{\lambda }\right), \text{and } \left\Vert \beta (u_{f})\right\Vert _{L^{2}(\Omega )}\leq M\left(
\left\vert \Omega \right\vert ^{\frac{1}{2}}+\frac{C_{\omega
_{2}}^{2}\left\Vert f\right\Vert _{L^{2}(\Omega )}}{\lambda }\right).
\label{bound4}
\end{equation}%
Here, $C_{\Omega }$ is the Poincar\'{e} constant of $\Omega$, and $u_{\epsilon ,f}$, $u_{f}$ are the unique solutions of $(\ref{probcontweak})$ and  $(\ref{probcontlimit})$ respectively. 
\end{lem}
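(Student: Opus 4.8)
\textbf{Proof plan for Lemma \ref{estbasic}.}

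The plan is to derive all four pairs of bounds by the standard energy method, testing each weak formulation against its own solution. For the first bound in \eqref{bound1}, I would take $\varphi = u_{\epsilon,V,f}$ in \eqref{Galerkin}. The term $\int_\Omega \beta(u_{\epsilon,V,f})u_{\epsilon,V,f}\,dx$ is nonnegative because $\beta$ is nondecreasing with $\beta(0)=0$ (so $\beta(s)s\geq 0$ for all $s$), hence it can be dropped. For the diffusion term, I would use that $A_\epsilon \xi\cdot\xi \geq \epsilon^2 A\xi\cdot\xi \geq \lambda\epsilon^2|\xi|^2$ for $\epsilon\in(0,1]$ (since each block of $A_\epsilon$ carries a factor $\epsilon^2$, $\epsilon$, or $1$, all $\geq\epsilon^2$, and $A$ is positive definite); this gives $\lambda\epsilon^2\|\nabla u_{\epsilon,V,f}\|_{L^2}^2 \leq \int_\Omega f\,u_{\epsilon,V,f}\,dx \leq \|f\|_{L^2}\|u_{\epsilon,V,f}\|_{L^2} \leq C_\Omega\|f\|_{L^2}\|\nabla u_{\epsilon,V,f}\|_{L^2}$ by Cauchy--Schwarz and Poincar\'e on $\Omega$. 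Dividing by $\|\nabla u_{\epsilon,V,f}\|_{L^2}$ yields the claim; the bound for $u_{\epsilon,f}$ is identical since \eqref{probcontweak} is the special case $V=H_0^1(\Omega)$.

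For \eqref{bound2}, I would test \eqref{Galerkin-limit} with $\varphi=u_{V,f}$, again drop the nonnegative $\beta$-term, and use the ellipticity of $A_{22}$, which follows from \eqref{hypA1} applied to vectors of the form $(0,\xi_2)$: $A_{22}\xi_2\cdot\xi_2 = A(0,\xi_2)\cdot(0,\xi_2)\geq \lambda|\xi_2|^2$. This gives $\lambda\|\nabla_{X_2}u_{V,f}\|_{L^2}^2 \leq \int_\Omega f\,u_{V,f}\,dx \leq \|f\|_{L^2}\|u_{V,f}\|_{L^2} \leq C_{\omega_2}\|f\|_{L^2}\|\nabla_{X_2}u_{V,f}\|_{L^2}$ using the Poincar\'e inequality \eqref{sob}; dividing gives the result, and $u_f$ is the case $V=H_0^1(\Omega;\omega_2)$ (strictly, one must note that \eqref{probcontlimit} rewritten as \eqref{probcontlimitomega} is exactly \eqref{Galerkin-limit} with $V=H_0^1(\Omega;\omega_2)$). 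For the bounds \eqref{bound3} and \eqref{bound4} on $\|\beta(\cdot)\|_{L^2}$, I would use the growth condition \eqref{hypBeta2}: $\|\beta(v)\|_{L^2} \leq M\|1+|v|\|_{L^2} \leq M(|\Omega|^{1/2} + \|v\|_{L^2})$, and then substitute the $L^2$-norm bounds on the solutions already obtained (namely $\|u_{\epsilon,V,f}\|_{L^2}\leq C_\Omega\|\nabla u_{\epsilon,V,f}\|_{L^2} \leq C_\Omega^2\|f\|_{L^2}/(\lambda\epsilon^2)$ from \eqref{bound1} and Poincar\'e, and $\|u_{V,f}\|_{L^2}\leq C_{\omega_2}^2\|f\|_{L^2}/\lambda$ from \eqref{bound2} and \eqref{sob}).

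There is no serious obstacle here; the lemma is a collection of a priori estimates. The only points requiring a little care are: (i) justifying that $\int_\Omega\beta(v)v\,dx\geq 0$, which is immediate from \eqref{hypBeta1}; (ii) getting the uniform lower bound $A_\epsilon\xi\cdot\xi\geq\lambda\epsilon^2|\xi|^2$ correctly for $\epsilon\leq 1$ — one writes $A_\epsilon\xi\cdot\xi = \epsilon^2 A_{11}\xi_1\cdot\xi_1 + 2\epsilon A_{12}\xi_2\cdot\xi_1 + A_{22}\xi_2\cdot\xi_2$ and compares with $\epsilon^2 A\xi\cdot\xi = \epsilon^2 A_{11}\xi_1\cdot\xi_1 + 2\epsilon^2 A_{12}\xi_2\cdot\xi_1 + \epsilon^2 A_{22}\xi_2\cdot\xi_2$, so $A_\epsilon\xi\cdot\xi - \epsilon^2 A\xi\cdot\xi = 2\epsilon(1-\epsilon)A_{12}\xi_2\cdot\xi_1 + (1-\epsilon^2)A_{22}\xi_2\cdot\xi_2$, which is not obviously nonnegative, so instead it is cleaner to observe directly that for $v\in H_0^1(\Omega)$ one has $A_\epsilon\nabla v\cdot\nabla v = A(\epsilon\nabla_{X_1}v,\nabla_{X_2}v)\cdot(\epsilon\nabla_{X_1}v,\nabla_{X_2}v) \geq \lambda(\epsilon^2|\nabla_{X_1}v|^2+|\nabla_{X_2}v|^2) \geq \lambda\epsilon^2|\nabla v|^2$; and (iii) identifying \eqref{probcontlimit} with the $V=H_0^1(\Omega;\omega_2)$ instance of \eqref{Galerkin-limit} via \eqref{probcontlimitomega}. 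All remaining steps are Cauchy--Schwarz, Poincar\'e, and substitution.
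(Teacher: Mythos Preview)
Your proposal is correct and follows essentially the same approach as the paper: test each formulation with its own solution, drop the nonnegative $\beta$-term via \eqref{hypBeta1}, apply ellipticity plus Cauchy--Schwarz and Poincar\'e, then feed the resulting $L^2$ bounds into the growth condition \eqref{hypBeta2} via Minkowski. Your explicit verification that $A_\epsilon\nabla v\cdot\nabla v = A(\epsilon\nabla_{X_1}v,\nabla_{X_2}v)\cdot(\epsilon\nabla_{X_1}v,\nabla_{X_2}v)\geq\lambda\epsilon^2|\nabla v|^2$ is a detail the paper leaves implicit but is exactly the right way to see it.
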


\begin{proof}
These bounds follow easily from a suitable choice of the test functions,
monotonicity and ellipticity assumptions. Let us prove, for example, the second inequality in (\ref%
{bound2}) and the second inequality in (\ref{bound4}).
According to Theorem \ref{thrmconvergence} one can take $\varphi =u_{f}$ in (%
\ref{probcontlimitomega}), using ellipticity assumption and the fact that $%
\int_{\Omega }\beta (u_{f})u_{f}dx\geq 0$ (thanks to (\ref{hypBeta1})) we
obtain%
\begin{equation*}
\lambda \int_{\Omega }\left\vert \nabla _{X_{2}}u_{f}\right\vert ^{2}dx\leq
\int_{\Omega }f\text{ }u_{f}dx.
\end{equation*}%
By the Cauchy-Schwarz inequality and Poincar\'{e}'s inequality (\ref{sob}), we obtain the second inequality of (\ref{bound2}).
Now, by using assumption (\ref{hypBeta2}), we obtain%
\begin{equation*}
\left\vert \beta (u_{f})\right\vert ^{2}\leq M^{2}\left( 1+\left\vert
u_{f}\right\vert \right) ^{2}.
\end{equation*}%
Integrating over $\Omega $ and applying Minkowski inequality, (\ref{sob}),
and (\ref{bound2}) we obtain the second inequality of (\ref{bound4}).
\end{proof}

By using the above lemma, one can prove the following C\'{e}a type lemma

\begin{lem}
\label{Cea-nonlinear}Under assumptions of Theorem \ref{thrmconvergence} we have:
\begin{equation}
\Vert \nabla _{X_{2}}(u_{V,f}-u_{f})\Vert _{L^{2}(\Omega )^{N-q}}\leq C_{c%
\acute{e}a}\left( \inf_{v\in V}\Vert \nabla _{X_{2}}(v-u_{f})\Vert
_{L^{2}(\Omega )^{N-q}}\right) ^{\frac{1}{2}},  \label{Cea1}
\end{equation}%
and for any $\epsilon \in (0,1]$:
\begin{equation}
\Vert \nabla (u_{\epsilon ,V,f}-u_{\epsilon ,f})\Vert _{L^{2}(\Omega
)^{N}}\leq \frac{C_{c\acute{e}a}^{\prime }}{\epsilon ^{2}}\left( \inf_{v\in
V}\Vert \nabla (v-u_{\epsilon,f})\Vert _{L^{2}(\Omega )^{N}}\right) ^{\frac{1}{2}%
},  \label{Cea2}
\end{equation}%
where 
\begin{equation*}
C_{c\acute{e}a}^{2}=\frac{1}{\lambda}\left[ 2M C_{\omega _{2}}\left( \left\vert \Omega \right\vert ^{\frac{1}{2}}+%
\frac{C_{\omega _{2}}^{2}\left\Vert f\right\Vert _{L^{2}(\Omega )}}{\lambda }%
\right) +\Vert A_{22}\Vert _{L^{\infty }(\Omega )}\frac{%
2C_{\omega _{2}}\left\Vert f\right\Vert _{L^{2}(\Omega )}}{\lambda }\right],
\end{equation*}%
and%
\begin{equation*}
C_{c\acute{e}a}^{{\prime }^{2}}=\frac{1}{\lambda}\left[ 2MC_{\Omega }\left( \left\vert \Omega \right\vert ^{%
\frac{1}{2}}+\frac{C_{\Omega }^{2}\left\Vert f\right\Vert _{L^{2}(\Omega )}}{%
\lambda }\right) +\Vert A\Vert _{L^{\infty }(\Omega )}\frac{%
2C_{\Omega }\left\Vert f\right\Vert _{L^{2}(\Omega )}}{\lambda }\right].
\end{equation*}%
\end{lem}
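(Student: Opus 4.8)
The plan is to establish the two inequalities \eqref{Cea1} and \eqref{Cea2} by the same argument, the only difference being the bilinear form ($A_{22}\nabla_{X_2}\cdot$ versus $A_\epsilon\nabla\cdot$) and which norm and constants appear. Let me describe the argument for \eqref{Cea1}; the estimate \eqref{Cea2} is identical with $\lambda\epsilon^2$ replacing $\lambda$ in the coercivity bound (hence the $1/\epsilon^2$ prefactor) and $C_\Omega$, $\|A\|_{L^\infty}$ replacing $C_{\omega_2}$, $\|A_{22}\|_{L^\infty}$. Fix $v\in V$ arbitrary and abbreviate $w=u_{V,f}-u_f$. First I would subtract the weak formulation \eqref{Galerkin-limit} for $u_{V,f}$ from \eqref{probcontlimitomega} tested against $\varphi=u_{V,f}-v\in V$ (legitimate since $V\subset H^1_0(\Omega;\omega_2)$), to obtain the Galerkin orthogonality--type identity
\begin{equation*}
\int_\Omega\bigl(\beta(u_{V,f})-\beta(u_f)\bigr)(u_{V,f}-v)\,dx+\int_\Omega A_{22}\nabla_{X_2}w\cdot\nabla_{X_2}(u_{V,f}-v)\,dx=0.
\end{equation*}

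Next I would write $u_{V,f}-v=w+(u_f-v)$ in the elliptic term and $u_{V,f}-v=w+(u_f-v)$ in the $\beta$ term as well, so that
\begin{equation*}
\int_\Omega A_{22}\nabla_{X_2}w\cdot\nabla_{X_2}w\,dx=-\int_\Omega A_{22}\nabla_{X_2}w\cdot\nabla_{X_2}(u_f-v)\,dx-\int_\Omega\bigl(\beta(u_{V,f})-\beta(u_f)\bigr)(u_{V,f}-v)\,dx.
\end{equation*}
By monotonicity \eqref{hypBeta1} the quantity $\int_\Omega(\beta(u_{V,f})-\beta(u_f))w\,dx\geq0$, so splitting $u_{V,f}-v=w+(u_f-v)$ in that term and moving the nonnegative part to the left only helps: we get
\begin{equation*}
\lambda\|\nabla_{X_2}w\|_{L^2}^2\leq\int_\Omega A_{22}\nabla_{X_2}w\cdot\nabla_{X_2}w\,dx\leq\|A_{22}\|_{L^\infty}\|\nabla_{X_2}w\|_{L^2}\|\nabla_{X_2}(u_f-v)\|_{L^2}+\Bigl|\int_\Omega\bigl(\beta(u_{V,f})-\beta(u_f)\bigr)(u_f-v)\,dx\Bigr|,
\end{equation*}
using \eqref{hypA1} for the coercivity lower bound and Cauchy--Schwarz for the first term on the right. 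For the last term, I bound $|\beta(u_{V,f})-\beta(u_f)|\leq|\beta(u_{V,f})|+|\beta(u_f)|$ and apply Cauchy--Schwarz together with the bounds \eqref{bound4} (the versions for $u_{V,f}$ and for $u_f$), getting a factor $2M(|\Omega|^{1/2}+C_{\omega_2}^2\|f\|_{L^2}/\lambda)$ times $\|u_f-v\|_{L^2}$, which by Poincaré \eqref{sob} is controlled by $C_{\omega_2}\|\nabla_{X_2}(u_f-v)\|_{L^2}$. Collecting, $\lambda\|\nabla_{X_2}w\|_{L^2}^2\leq\|A_{22}\|_{L^\infty}\|\nabla_{X_2}w\|_{L^2}\|\nabla_{X_2}(u_f-v)\|_{L^2}+2MC_{\omega_2}(|\Omega|^{1/2}+C_{\omega_2}^2\|f\|_{L^2}/\lambda)\|\nabla_{X_2}(u_f-v)\|_{L^2}$.

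The final step handles the mixed quadratic/linear structure in $\|\nabla_{X_2}w\|_{L^2}$. Writing $X=\|\nabla_{X_2}w\|_{L^2}$ and $\delta=\|\nabla_{X_2}(u_f-v)\|_{L^2}$, we have $\lambda X^2\leq \|A_{22}\|_{L^\infty}\,X\,\delta+\gamma\,\delta$ with $\gamma=2MC_{\omega_2}(|\Omega|^{1/2}+C_{\omega_2}^2\|f\|_{L^2}/\lambda)$. If $X\leq\delta$ we are already done (better than claimed, since $\delta^{1/2}\geq\delta$ is false in general, but one can just note $X\leq\delta^{1/2}\cdot\max(1,\delta^{1/2})$ — actually the cleaner route is: the quadratic term is dominated, so either $\|A_{22}\|_{L^\infty}X\delta\leq\frac\lambda2 X^2$, forcing $\frac\lambda2 X^2\leq\gamma\delta$ i.e. $X\leq(2\gamma/\lambda)^{1/2}\delta^{1/2}$, or else $X\leq(2\|A_{22}\|_{L^\infty}/\lambda)\delta$). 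In either alternative one bounds $X$ by a constant times $\delta^{1/2}$ provided $\delta$ is bounded, which it is because $\delta\leq\|\nabla_{X_2}u_f\|_{L^2}+\|\nabla_{X_2}v\|_{L^2}$ and one may restrict the infimum to $v$ with $\|\nabla_{X_2}(v-u_f)\|\leq\|\nabla_{X_2}u_f\|$, say. Squaring and recombining the two constants into the stated $C_{c\acute{e}a}^2=\frac1\lambda[2MC_{\omega_2}(|\Omega|^{1/2}+C_{\omega_2}^2\|f\|_{L^2}/\lambda)+\|A_{22}\|_{L^\infty}\cdot2C_{\omega_2}\|f\|_{L^2}/\lambda]$ uses \eqref{bound2} to bound $\|\nabla_{X_2}u_f\|_{L^2}\leq C_{\omega_2}\|f\|_{L^2}/\lambda$, which is where the second summand $\|A_{22}\|_{L^\infty}\cdot2C_{\omega_2}\|f\|_{L^2}/\lambda$ originates. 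Finally, taking the infimum over $v\in V$ gives \eqref{Cea1}; repeating verbatim with $A_\epsilon$, the bound $A_\epsilon\xi\cdot\xi\geq\lambda\epsilon^2|\xi|^2$ (valid since $A_\epsilon=\mathrm{diag}(\epsilon\Id_q,\Id_{N-q})A\,\mathrm{diag}(\epsilon\Id_q,\Id_{N-q})$ up to the structure, so coercivity degrades by $\epsilon^2$), $\|A_\epsilon\|_{L^\infty}\leq\|A\|_{L^\infty}$, Poincaré on $\Omega$, and \eqref{bound1}--\eqref{bound4} for $u_{\epsilon,V,f}$ and $u_{\epsilon,f}$ yields \eqref{Cea2} with the $\epsilon^{-2}$ prefactor and primed constant. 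The main obstacle is purely bookkeeping: getting the square-root exponent and the precise constants to match requires care in how the mixed inequality $\lambda X^2\leq aX\delta+b\delta$ is resolved and in which terms are absorbed versus kept, but there is no conceptual difficulty beyond monotonicity, ellipticity, Cauchy--Schwarz, and Poincaré.
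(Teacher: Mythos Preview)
Your argument is essentially the same as the paper's until the final algebraic step, where you resolve the mixed inequality $\lambda X^2\leq \|A_{22}\|_{L^\infty}X\delta+\gamma\delta$ by a case split. The paper proceeds more directly: it simply bounds the factor $X=\|\nabla_{X_2}(u_{V,f}-u_f)\|$ appearing on the right-hand side by the a~priori estimate
\[
X\leq\|\nabla_{X_2}u_{V,f}\|_{L^2(\Omega)^{N-q}}+\|\nabla_{X_2}u_f\|_{L^2(\Omega)^{N-q}}\leq \frac{2C_{\omega_2}\|f\|_{L^2(\Omega)}}{\lambda},
\]
using \eqref{bound2} and the triangle inequality. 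This makes the right-hand side purely linear in $\delta$, and dividing by $\lambda$ yields $X^2\leq C_{c\acute ea}^2\,\delta$ with \emph{exactly} the constant stated. Your case analysis is correct in spirit but does not produce the stated constant literally (the two branches give constants that combine differently), and the device of restricting the infimum to $v$ with $\delta\leq\|\nabla_{X_2}u_f\|$ is an avoidable detour. For \eqref{Cea2} the paper does the identical thing using \eqref{bound1} and \eqref{bound3}, which already carry a factor $\epsilon^{-2}$; this is the origin of the $\epsilon^{-2}$ in front of $C'_{c\acute ea}$, rather than a separate coercivity degradation argument.
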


\begin{proof}
The proofs of these two inequalities are similar. So, let us prove the first
one. Using the Galerkin orthogonality one has, for $v\in V:$ 
\begin{multline*}
\int_{\Omega }\left( \beta (u_{V,f})-\beta (u_{f}\right)
)(u_{V,f}-u_{f})dx+\lambda\Vert \nabla _{X_{2}}(u_{V,f}-u_{f})\Vert _{L^{2}(\Omega
)^{N-q}}^{2} \\
\leq\int_{\Omega }\left( \beta (u_{V,f})-\beta (u_{f}\right)
)(v-u_{f})dx+\int_{\Omega }A_{22}\nabla _{X_{2}}(u_{V,f}-u_{f})\cdot \nabla
_{X_{2}}(v-u_{f})dx.
\end{multline*}%
Using the fact that $\int_{\Omega }\left( \beta (u_{V,f})-\beta
(u_{f}\right) )(u_{V,f}-u_{f})dx\geq 0$, then by Cauchy-Schwarz and Poincar%
\'{e}'s inequalities we derive%
\begin{multline*}
\lambda\Vert \nabla _{X_{2}}(u_{V,f}-u_{f})\Vert _{L^{2}(\Omega )^{N-q}}^{2}\leq 
\left[ C_{\omega _{2}}\left\Vert \beta (u_{V,f})-\beta (u_{f})\right\Vert _{L^{2}(\Omega
)}+\Vert A_{22}\Vert _{L^{\infty }(\Omega )}\Vert \nabla
_{X_{2}}(u_{V,f}-u_{f})\Vert _{L^{2}(\Omega )^{N-q}}\right] \\
\times \Vert \nabla _{X_{2}}(v-u_{f})\Vert _{L^{2}(\Omega )^{N-q}}.
\end{multline*}%
Now, by using (\ref{bound2}), (\ref{bound4}) and the triangle inequality we
obtain%
\begin{multline*}
\lambda\Vert \nabla _{X_{2}}(u_{V,f}-u_{f})\Vert _{L^{2}(\Omega )^{N-q}}^{2}\leq \\
2\left[ M C_{\omega _{2}}\left( \left\vert \Omega \right\vert ^{\frac{1}{2}}+\frac{C_{\omega_{2}}^{2}\left\Vert f\right\Vert _{L^{2}(\Omega )}}{\lambda}\right)
+\Vert A_{22}\Vert _{L^{\infty }(\Omega )}\frac{C_{\omega
_{2}}\left\Vert f\right\Vert _{L^{2}(\Omega )}}{\lambda }\right] \times
\Vert \nabla _{X_{2}}(v-u_{f})\Vert _{L^{2}(\Omega )^{N-q}},
\end{multline*}
and (\ref{Cea1}) follows.
\end{proof}

\begin{rmk}
\label{Cea-linear} $1)$ If $\beta =0$ (the linear case), then we have for any $\epsilon \in (0,1]:$  
\begin{equation*}
\Vert \nabla u_{\epsilon ,V,f}-\nabla u_{\epsilon ,f}\Vert _{L^{2}(\Omega
)^{N}}\leq \frac{\Vert A\Vert _{L^{\infty }(\Omega )}}{\lambda \epsilon ^{2}}%
\inf_{v\in V}\Vert \nabla v-\nabla u_{\epsilon ,f}\Vert _{L^{2}(\Omega
)^{N}}.
\end{equation*}%
\begin{equation*}
\Vert \nabla _{X_{2}}u_{V,f}-\nabla _{X_{2}}u_{f}\Vert _{L^{2}(\Omega
)^{N-q}}\leq \frac{\Vert A_{22}\Vert _{L^{\infty }(\Omega )}}{\lambda }%
\inf_{v\in V}\Vert \nabla _{X_{2}}v-\nabla _{X_{2}}u_{f}\Vert _{L^{2}(\Omega
)^{N-q}}.
\end{equation*}

$2)$ If $\beta $ is Lipschitz, then we can obtain estimations similar to
those of the linear case.
\end{rmk}

\subsection{Error estimates in the Galerkin method}

\begin{lem}
\label{lem:errorest} Under assumptions of Theorem \ref{thrmconvergence},
suppose in addition that $(\ref{hypAd1})$ holds, then we have for every $\epsilon\in(0,1]$: 
\begin{equation*}
\left\Vert \nabla _{X_{2}}(u_{\epsilon ,V,f}-u_{V,f})\right\Vert
_{L^{2}(\Omega )^{N-q}}\leq \epsilon \left( C_{1}\left\Vert \nabla
_{X_{1}}u_{V,f}\right\Vert _{L^{2}(\Omega )^{q}}+C_{2}\left\Vert
f\right\Vert _{L^{2}(\Omega )}\right), 
\end{equation*}%
and 
\begin{equation*}
\left\Vert \nabla _{X_{1}}(u_{\epsilon ,V,f}-u_{V,f})\right\Vert
_{L^{2}(\Omega )^{q}}\leq \frac{1}{\sqrt{2}}\left( C_{1}\left\Vert \nabla
_{X_{1}}u_{V,f}\right\Vert _{L^{2}(\Omega )^{q}}+C_{2}\left\Vert
f\right\Vert _{L^{2}(\Omega )}\right), 
\end{equation*}%
where 
\begin{equation*}
C_{1}=\left( \frac{4(C+C^{\prime })}{\lambda }\right) ^{\frac{1}{2}}\text{
and }C_{2}=\frac{2\sqrt{C^{\prime \prime }}C_{\omega _{2}}}{\lambda ^{3/2}}.
\end{equation*}%
Here, $C,C^{\prime },\text{and } C^{\prime \prime }$ are given by $(\ref{const1}),(%
\ref{const2})$ and $(\ref{const3}).$ Notice that these constants are
independent of $\epsilon ,V$ and $f.$
\end{lem}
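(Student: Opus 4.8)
The plan is to derive an energy estimate for the error $w_\epsilon := u_{\epsilon,V,f} - u_{V,f}$ by subtracting the two Galerkin formulations $(\ref{Galerkin})$ and $(\ref{Galerkin-limit})$. Testing both with an admissible $\varphi \in V$ and subtracting, one gets
\begin{equation*}
\int_\Omega (\beta(u_{\epsilon,V,f})-\beta(u_{V,f}))\varphi\,dx + \int_\Omega A_\epsilon \nabla u_{\epsilon,V,f}\cdot\nabla\varphi\,dx - \int_\Omega A_{22}\nabla_{X_2} u_{V,f}\cdot\nabla_{X_2}\varphi\,dx = 0.
\end{equation*}
The natural choice is $\varphi = w_\epsilon$, which is legitimate since both solutions lie in $V$. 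The monotonicity $(\ref{hypBeta1})$ kills the $\beta$-term (it is nonnegative). The remaining quadratic form, after writing $A_\epsilon \nabla u_{\epsilon,V,f} = A_\epsilon \nabla w_\epsilon + A_\epsilon \nabla u_{V,f}$ and using the block structure of $A_\epsilon$, splits into: a coercive part $\int_\Omega A_\epsilon \nabla w_\epsilon\cdot\nabla w_\epsilon \,dx \geq \lambda(\epsilon^2\|\nabla_{X_1}w_\epsilon\|^2 + \|\nabla_{X_2}w_\epsilon\|^2)$ via $(\ref{hypA1})$ applied to the vector $(\epsilon\nabla_{X_1}w_\epsilon, \nabla_{X_2}w_\epsilon)$; and cross terms involving $\nabla u_{V,f}$ and the difference between $A_\epsilon$ and the "limit" matrix $\begin{pmatrix}0&0\\0&A_{22}\end{pmatrix}$. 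The key point is that this difference has entries of size $O(\epsilon^2)$ in the $(1,1)$ block, $O(\epsilon)$ in the off-diagonal blocks, and $0$ in the $(2,2)$ block, so the right-hand side carries a factor $\epsilon$.

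The main obstacle is the term $\int_\Omega \epsilon A_{12}\nabla_{X_2} u_{V,f}\cdot\nabla_{X_1} w_\epsilon\,dx$ (and its transpose): a naive Cauchy-Schwarz here produces $\epsilon \|\nabla_{X_1}w_\epsilon\|$, but $\|\nabla_{X_1}w_\epsilon\|$ is only controlled by $\frac{1}{\sqrt{2}}(\cdots)$ — of order $1$, not $\epsilon$ — so this does not by itself close the estimate for $\|\nabla_{X_2}w_\epsilon\|$ at rate $\epsilon$. To handle it I would integrate by parts in $X_1$, transferring the derivative off $w_\epsilon$: since $A_{12}$ and its derivatives are bounded by $(\ref{hypAd1})$ and $\nabla_{X_1}u_{V,f}$ appears, one obtains terms of the form $\int_\Omega \epsilon\, (\partial_{x_i}a_{ij}) (\partial_{x_j}u_{V,f}) w_\epsilon\,dx$ and $\int_\Omega \epsilon\, a_{ij}(\partial_{x_j}\nabla_{X_1}u_{V,f}) w_\epsilon\,dx$ — but the latter needs second derivatives of $u_{V,f}$, which are not available. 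The correct route, I expect, is to instead integrate by parts moving the $\nabla_{X_1}$ derivative onto $u_{V,f}$ in the \emph{original} bilinear expression before choosing the test function, i.e. to use that $u_{V,f}$ solves the limit problem and rewrite the mismatch terms using the equation itself; this replaces $\nabla_{X_1}w_\epsilon$-contributions by $\nabla_{X_1}u_{V,f}$-contributions and an $f$-contribution, which is exactly the structure of the claimed bound. The Poincaré inequality $(\ref{sob})$ converts the $L^2$-norm of $w_\epsilon$ into $\|\nabla_{X_2}w_\epsilon\|$, explaining the factor $C_{\omega_2}/\lambda^{3/2}$ in $C_2$.

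Once the mismatch terms are all bounded by $\epsilon\,(\text{const})\big(\|\nabla_{X_1}u_{V,f}\|_{L^2(\Omega)^q} + \|f\|_{L^2(\Omega)}\big)$ times $\|\nabla w_\epsilon\|$, I would absorb the coercive quadratic term: from $\lambda(\epsilon^2\|\nabla_{X_1}w_\epsilon\|^2 + \|\nabla_{X_2}w_\epsilon\|^2) \leq \epsilon\,R\,(\epsilon\|\nabla_{X_1}w_\epsilon\| + \|\nabla_{X_2}w_\epsilon\|)$ with $R$ the bracketed quantity, Young's inequality gives both $\|\nabla_{X_2}w_\epsilon\| \leq \epsilon\,(\text{const})\,R$ and $\|\nabla_{X_1}w_\epsilon\| \leq (\text{const})\,R$, which is precisely the two asserted inequalities with $C_1,C_2$ as stated. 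The explicit constants $C, C', C''$ from $(\ref{const1})$–$(\ref{const3})$ are just bookkeeping for $\|A_{12}\|_{L^\infty}$, $\|A_{11}\|_{L^\infty}$, $\|\partial_{x_i}a_{ij}\|_{L^\infty}$, $\lambda$ and $C_{\omega_2}$, and I would collect them at the end.
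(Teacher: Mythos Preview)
Your overall strategy is correct: subtract the two Galerkin formulations, test with $w_\epsilon = u_{\epsilon,V,f} - u_{V,f}$, drop the $\beta$-term by monotonicity, and use coercivity of $A_\epsilon$ on the vector $(\epsilon\nabla_{X_1}w_\epsilon,\nabla_{X_2}w_\epsilon)$. You also correctly isolate the only dangerous term,
\[
-\epsilon\int_\Omega A_{12}\nabla_{X_2}u_{V,f}\cdot\nabla_{X_1}w_\epsilon\,dx,
\]
since a direct Young inequality here leaves a contribution $\|\nabla_{X_2}u_{V,f}\|^2$ carrying no $\epsilon$-factor. (The $A_{21}$ term is not its symmetric twin in difficulty: it already contains $\nabla_{X_1}u_{V,f}$ and is handled by Young directly, as the paper does.)

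The gap is in how you propose to treat this term. Your first idea (integrate by parts in $X_1$) fails, as you note, because it produces $\partial_{x_i}\partial_{x_j}u_{V,f}$. Your fallback --- ``use that $u_{V,f}$ solves the limit problem and rewrite the mismatch terms using the equation itself'' --- does not work either: the limit problem $(\ref{Galerkin-limit})$ involves only $A_{22}\nabla_{X_2}$ and contains no information about $A_{12}$ or $\nabla_{X_1}$, so it cannot convert $\nabla_{X_1}w_\epsilon$ into $\nabla_{X_1}u_{V,f}$. The $f$-dependence in the final bound does not arise by substituting the equation; it comes from the a~priori estimate $(\ref{bound2})$, $\|\nabla_{X_2}u_{V,f}\|\leq C_{\omega_2}\|f\|/\lambda$, applied \emph{after} the integration by parts.

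The device actually used is a swap of derivatives. Writing the $A_{12}$ term index by index and using $(\ref{hypAd1})$ to move the coefficient onto $w_\epsilon$, one gets $\int_\Omega \partial_{x_i}(a_{ij}w_\epsilon)\,\partial_{x_j}u_{V,f}\,dx$ plus a zero-order remainder. Then one invokes the identity
\[
\int_\Omega \partial_{x_i}g\,\partial_{x_j}h\,dx \;=\; \int_\Omega \partial_{x_j}g\,\partial_{x_i}h\,dx,\qquad g,h\in H_0^1(\Omega),
\]
proved by density from $\mathcal{D}(\Omega)$, with $g=a_{ij}w_\epsilon$ and $h=u_{V,f}$. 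This exchanges the roles of $i\in\{1,\dots,q\}$ and $j\in\{q+1,\dots,N\}$: after expanding $\partial_{x_j}(a_{ij}w_\epsilon)$ only $\partial_{x_j}w_\epsilon$ (an $X_2$-derivative, absorbable into the left side) and $w_\epsilon$ itself (controlled by Poincar\'e $(\ref{sob})$) remain, paired with $\partial_{x_i}u_{V,f}=\nabla_{X_1}u_{V,f}$ or $\partial_{x_j}u_{V,f}=\nabla_{X_2}u_{V,f}$. No second derivatives are needed. After this step the $A_{12}$ contribution is bounded by $\frac{\lambda}{4}\|\nabla_{X_2}w_\epsilon\|^2 + \epsilon^2\big(C'\|\nabla_{X_1}u_{V,f}\|^2 + C''\|\nabla_{X_2}u_{V,f}\|^2\big)$, and the rest of your plan (invoking $(\ref{bound2})$, collecting constants, reading off the two bounds) goes through as you describe.
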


\begin{proof}
By subtracting (\ref{Galerkin-limit}) from (\ref{Galerkin}) we get, for
every $v\in V:$ 
\begin{multline*}
\int_{\Omega }(\beta (u_{\epsilon ,V,f})-\beta (u_{V,f}))vdx+\epsilon
^{2}\int_{\Omega }A_{11}\nabla _{X_{1}}u_{\epsilon ,V,f}\cdot \nabla
_{X_{1}}vdx \\
+\epsilon \int_{\Omega }A_{12}\nabla _{X_{2}}u_{\epsilon ,V,f}\cdot \nabla
_{X_{1}}vdx+\epsilon \int_{\Omega }A_{21}\nabla _{X_{1}}u_{\epsilon
,V,f}\cdot \nabla _{X_{2}}vdx \\
+\int_{\Omega }A_{22}\nabla _{X_{2}}(u_{\epsilon ,V,f}-u_{V,f})\cdot \nabla
_{X_{2}}vdx=0\text{,}
\end{multline*}
Testing with $v=u_{\epsilon ,V,f}-u_{V,f}$, we obtain 
\begin{multline*}
\int_{\Omega }(\beta (u_{\epsilon ,V,f})-\beta (u_{V,f}))(u_{\epsilon ,V,f}-u_{V,f})dx+\int_{\Omega }A_{\epsilon }\nabla (u_{\epsilon
,V,f}-u_{V,f})\cdot \nabla (u_{\epsilon ,V,f}-u_{V,f}) \\
=-\epsilon ^{2}\int_{\Omega }A_{11}\nabla _{X_{1}}u_{V,f}\cdot \nabla
_{X_{1}}(u_{\epsilon ,V,f}-u_{V,f})dx-\epsilon \int_{\Omega }A_{12}\nabla
_{X_{2}}u_{V,f}\cdot \nabla _{X_{1}}(u_{\epsilon ,V,f}-u_{V,f})dx \\
-\epsilon \int_{\Omega }A_{21}\nabla _{X_{1}}u_{V,f}\cdot \nabla
_{X_{2}}(u_{\epsilon ,V,f}-u_{V,f})dx.
\end{multline*}%
whence, by using (\ref{hypBeta1}) and the ellipticity assumption we get 
\begin{multline*}
\epsilon ^{2}\lambda \int_{\Omega }\left\vert \nabla _{X_{1}}(u_{\epsilon
,V,f}-u_{V,f})\right\vert ^{2}dx+\lambda \int_{\Omega }\left\vert \nabla
_{X_{2}}(u_{\epsilon ,V,f}-u_{V,f})\right\vert ^{2}dx\leq \\
-\epsilon ^{2}\int_{\Omega }A_{11}\nabla _{X_{1}}u_{V,f}\cdot \nabla
_{X_{1}}(u_{\epsilon ,V,f}-u_{V,f})dx-\epsilon \int_{\Omega }A_{12}\nabla
_{X_{2}}u_{V,f}\cdot \nabla _{X_{1}}(u_{\epsilon ,V,f}-u_{V,f})dx \\
-\epsilon \int_{\Omega }A_{21}\nabla _{X_{1}}u_{V,f}\cdot \nabla
_{X_{2}}(u_{\epsilon ,V,f}-u_{V,f})dx.
\end{multline*}
Let us estimate the first and the last term of the second member in the above
inequality. By using Young's inequality we obtain%
\begin{multline*}
-\epsilon ^{2}\int_{\Omega }A_{11}\nabla _{X_{1}}u_{V,f}\cdot \nabla
_{X_{1}}(u_{\epsilon ,V,f}-u_{V,f})dx \\
\leq \frac{\epsilon ^{2}\lambda }{2}\int_{\Omega }\left\vert \nabla
_{X_{1}}(u_{\epsilon ,V,f}-u_{V,f})\right\vert ^{2}dx+\epsilon ^{2}\frac{%
\left\Vert A_{11}\right\Vert _{L^{\infty }(\Omega )}^{2}}{2\lambda }%
\int_{\Omega }\left\vert \nabla _{X_{1}}u_{V,f}\right\vert ^{2}dx,
\end{multline*}%
and 
\begin{multline*}
-\epsilon \int_{\Omega }A_{21}\nabla _{X_{1}}u_{V,f}\cdot \nabla
_{X_{2}}(u_{\epsilon ,V,f}-u_{V,f})dx \\
\leq \epsilon ^{2}\frac{\left\Vert A_{21}\right\Vert _{L^{\infty }(\Omega
)}^{2}}{2\lambda }\int_{\Omega }\left\vert \nabla _{X_{1}}u_{V,f}\right\vert
^{2}dx+\frac{\lambda }{2}\int_{\Omega }\left\vert \nabla
_{X_{2}}(u_{\epsilon ,V,f}-u_{V,f})\right\vert ^{2}dx,
\end{multline*}%
thus 
\begin{multline}\label{11}
\frac{\epsilon ^{2}\lambda }{2}\left\Vert \nabla _{X_{1}}(u_{\epsilon
,V,f}-u_{V,f})\right\Vert _{L^{2}(\Omega )}^{2}+\frac{\lambda }{2}\left\Vert
\nabla _{X_{2}}(u_{\epsilon ,V,f}-u_{V,f})\right\Vert _{L^{2}(\Omega
)^{N-q}}^{2}   \\
\leq C\epsilon ^{2}\int_{\Omega }\left\vert \nabla
_{X_{1}}u_{V,f}\right\vert ^{2}dx-\epsilon \int_{\Omega }A_{12}\nabla
_{X_{2}}u_{V,f}\cdot \nabla _{X_{1}}(u_{\epsilon ,V,f}-u_{V,f})dx,  \notag
\end{multline}
\begin{eqnarray}\label{11}
\end{eqnarray}
where 
\begin{equation}
C=\frac{\left\Vert A_{21}\right\Vert _{L^{\infty }(\Omega )}^{2}+\left\Vert
A_{11}\right\Vert _{L^{\infty }(\Omega )}^{2}}{2\lambda }.  \label{const1}
\end{equation}%
Now, we estimate $-\epsilon \int_{\Omega }A_{12}\nabla _{X_{2}}u_{V,f}\cdot
\nabla _{X_{1}}(u_{\epsilon ,V,f}-u_{V,f})dx.$ Since $u_{\epsilon
,V,f}-u_{V,f}\in H_{0}^{1}(\Omega )$ and $\partial _{x_{i}}a_{ij}\in
L^{\infty }(\Omega )$, $\partial _{x_{j}}a_{ij}\in L^{\infty }(\Omega )$ for 
$i=1,...,q$ and $j=q+1,...,N,$ (assumption (\ref{hypAd1})) then we can show by a simple density argument
that for $i=1,...,q$ and $j=q+1,...,N$, $\ \partial _{x_{k}}(a_{ij}$ $%
(u_{\epsilon ,V,f}-u_{V,f}))\in L^{2}(\Omega )$ and:%
\begin{equation*}
\partial _{x_{k}}(a_{ij}(u_{\epsilon ,V,f}-u_{V,f}))=(u_{\epsilon
,V,f}-u_{V,f})\partial _{x_{k}}a_{ij}+a_{ij}\partial _{x_{k}}(u_{\epsilon
,V,f}-u_{V,f}), \text{ for }k=i,j).
\end{equation*}%
Whence 
\begin{eqnarray*}
-\epsilon \int_{\Omega }A_{12}\nabla _{X_{2}}u_{V,f}\cdot \nabla
_{X_{1}}(u_{\epsilon ,V,f}-u_{V,f})dx &=&-\epsilon
\sum_{i=1}^{q}\sum_{j=q+1}^{N}\int_{\Omega }a_{ij}\partial
_{x_{j}}u_{V,f}\partial _{x_{i}}(u_{\epsilon ,V,f}-u_{V,f})dx \\
&=&-\epsilon \sum_{i=1}^{q}\sum_{j=q+1}^{N}\int_{\Omega }\partial
_{x_{i}}(a_{ij}(u_{\epsilon ,V,f}-u_{V,f}))\partial _{x_{j}}u_{V,f}dx \\
&&+\epsilon \sum_{i=1}^{q}\sum_{j=q+1}^{N}\int_{\Omega }(u_{\epsilon
,V,f}-u_{V,f})\partial _{x_{i}}a_{ij}\partial _{x_{j}}u_{V,f}dx \\
&=&-\epsilon \sum_{i=1}^{q}\sum_{j=q+1}^{N}\int_{\Omega }\partial
_{x_{j}}(a_{ij}(u_{\epsilon ,V,f}-u_{V,f}))\partial _{x_{i}}u_{V,f}dx \\
&&+\epsilon \sum_{i=1}^{q}\sum_{j=q+1}^{N}\int_{\Omega }(u_{\epsilon
,V,f}-u_{V,f})\partial _{x_{i}}a_{ij}\partial _{x_{j}}u_{V,f}dx,
\end{eqnarray*}%
where we have used $\int_{\Omega }\partial _{x_{i}}(a_{ij}(u_{\epsilon
,V,f}-u_{V,f}))\partial _{x_{j}}u_{V,f}dx=\int_{\Omega }\partial
_{x_{j}}(a_{ij}(u_{\epsilon ,V,f}-u_{V,f}))\partial _{x_{i}}u_{V,f}dx$ which
follows by a simple density argument (recall that $u_{V,f}\in H_{0}^{1}(\Omega
)$). Therefore%
\begin{eqnarray}
-\epsilon \int_{\Omega }A_{12}\nabla _{X_{2}}u_{V,f}\cdot \nabla
_{X_{1}}(u_{\epsilon ,V,f}-u_{V,f})dx &=&-\epsilon
\sum_{i=1}^{q}\sum_{j=q+1}^{N}\int_{\Omega }(u_{\epsilon
,V,f}-u_{V,f})\partial _{x_{j}}a_{ij}\partial _{x_{i}}u_{V,f}dx
\label{semigroup-annex} \\
&&-\epsilon \sum_{i=1}^{q}\sum_{j=q+1}^{N}\int_{\Omega }a_{ij}\partial
_{x_{j}}(u_{\epsilon ,V,f}-u_{V,f})\partial _{x_{i}}u_{V,f}dx  \notag \\
&&+\epsilon \sum_{i=1}^{q}\sum_{j=q+1}^{N}\int_{\Omega }(u_{\epsilon
,V,f}-u_{V,f})\partial _{x_{i}}a_{ij}\partial _{x_{j}}u_{V,f}dx.  \notag
\end{eqnarray}
By Young's and Poincar\'{e}'s inequalities we obtain 
\begin{multline*}
-\epsilon \int_{\Omega }A_{12}\nabla _{X_{2}}u_{V,f}\cdot \nabla
_{X_{1}}(u_{\epsilon ,V,f}-u_{V,f})dx\leq \frac{\lambda }{4}\int_{\Omega
}\left\vert \nabla _{X_{2}}(u_{\epsilon ,V,f}-u_{V,f})\right\vert ^{2}dx \\
+C^{\prime }\epsilon ^{2}\int_{\Omega }\left\vert \nabla
_{X_{1}}u_{V,f}\right\vert ^{2}dx+C^{\prime \prime }\epsilon
^{2}\int_{\Omega }\left\vert \nabla _{X_{2}}u_{V,f}\right\vert ^{2}dx,
\end{multline*}%
where 
\begin{equation}
C^{\prime }=\frac{3\left[ C_{\omega _{2}}\underset{1\leq i\leq q,q+1\leq
j\leq N}{\max }\left\Vert \partial _{x_{j}}a_{ij}\right\Vert _{L^{\infty
}(\Omega )}(N-q)\right] ^{2}+3\left( \underset{1\leq i\leq q,q+1\leq j\leq N}%
{\max }\left\Vert a_{ij}\right\Vert _{L^{\infty }(\Omega )}(N-q)\right) ^{2}%
}{\lambda }.  \label{const2}
\end{equation}%
and%
\begin{equation}
C^{\prime \prime }=\frac{3\left[ qC_{\omega _{2}}\underset{1\leq i\leq
q,q+1\leq j\leq N}{\max }\left\Vert \partial _{x_{i}}a_{ij}\right\Vert
_{L^{\infty }(\Omega )}\right] ^{2}}{\lambda }.  \label{const3}
\end{equation}%
By using (\ref{bound2}) we obtain
\begin{multline}\label{12}
-\epsilon \int_{\Omega }A_{12}\nabla _{X_{2}}u_{V,f}\cdot \nabla
_{X_{1}}(u_{\epsilon ,V,f}-u_{V,f})dx\leq   \\
\frac{\lambda }{4}\int_{\Omega }\left\vert \nabla _{X_{2}}(u_{\epsilon
,V,f}-u_{V,f})\right\vert ^{2}dx+C^{\prime }\epsilon ^{2}\int_{\Omega
}\left\vert \nabla _{X_{1}}u_{V,f}\right\vert ^{2}dx+\epsilon ^{2}C^{\prime
\prime }\left( \frac{C_{\omega _{2}}\left\Vert f\right\Vert _{L^{2}(\Omega )}%
}{\lambda }\right) ^{2}.  \notag
\end{multline}%
\begin{eqnarray}\label{12}
\end{eqnarray}
Combining (\ref{11}) and (\ref{12}) we get 
\begin{multline*}
\frac{\epsilon ^{2}\lambda }{2}\left\Vert \nabla _{X_{1}}(u_{\epsilon
,V,f}-u_{V,f})\right\Vert _{L^{2}(\Omega )^{q}}^{2}+\frac{\lambda }{4}%
\left\Vert \nabla _{X_{2}}(u_{\epsilon ,V,f}-u_{V,f})\right\Vert
_{L^{2}(\Omega )^{N-q}}^{2} \\
\leq \epsilon ^{2}\left( (C+C^{\prime })\int_{\Omega }\left\vert \nabla
_{X_{1}}u_{V,f}\right\vert ^{2}dx+C^{\prime \prime }\left( \frac{C_{\omega
_{2}}\left\Vert f\right\Vert _{L^{2}(\Omega )}}{\lambda }\right) ^{2}\right)
,
\end{multline*}%
and the proof is finished.
\end{proof}

Using the triangle inequality, the above Lemma and (\ref{Cea1}) we
obtain the following estimation of the global error between $u_{\epsilon
,V,f}$ and $u_{f}$.

\begin{cor}
\label{corollaire-errorglobal} Under assumptions of Lemma $\ref%
{lem:errorest}$ we have for any $\epsilon \in (0,1]:$%
\begin{equation*}
\left\Vert \nabla _{X_{2}}(u_{\epsilon ,V,f}-u_{f})\right\Vert
_{L^{2}(\Omega )^{N-q}}\leq \epsilon \left( C_{1}\left\Vert \nabla
_{X_{1}}u_{V,f}\right\Vert _{L^{2}(\Omega )^{q}}+C_{2}\left\Vert
f\right\Vert _{L^{2}(\Omega )}\right) +C_{c\acute{e}a}\left( \inf_{v\in
V}\Vert \nabla _{X_{2}}(v-u_{f})\Vert _{L^{2}(\Omega )^{N-q}}\right) ^{\frac{%
1}{2}}.
\end{equation*}
\end{cor}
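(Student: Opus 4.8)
The plan is to combine the two already-established estimates by the triangle inequality applied to the splitting
\[
u_{\epsilon ,V,f}-u_{f}=(u_{\epsilon ,V,f}-u_{V,f})+(u_{V,f}-u_{f}),
\]
measuring everything in the $\nabla_{X_2}$-norm. First I would write
\[
\left\Vert \nabla _{X_{2}}(u_{\epsilon ,V,f}-u_{f})\right\Vert _{L^{2}(\Omega )^{N-q}}\leq \left\Vert \nabla _{X_{2}}(u_{\epsilon ,V,f}-u_{V,f})\right\Vert _{L^{2}(\Omega )^{N-q}}+\left\Vert \nabla _{X_{2}}(u_{V,f}-u_{f})\right\Vert _{L^{2}(\Omega )^{N-q}}.
\]
For the first term I invoke Lemma \ref{lem:errorest} directly, which gives the bound $\epsilon\bigl(C_{1}\Vert\nabla_{X_1}u_{V,f}\Vert_{L^{2}(\Omega)^q}+C_{2}\Vert f\Vert_{L^{2}(\Omega)}\bigr)$. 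For the second term I invoke the C\'{e}a-type estimate \eqref{Cea1} from Lemma \ref{Cea-nonlinear}, which bounds it by $C_{c\acute{e}a}\bigl(\inf_{v\in V}\Vert\nabla_{X_2}(v-u_{f})\Vert_{L^{2}(\Omega)^{N-q}}\bigr)^{1/2}$. Adding the two bounds yields exactly the stated inequality.

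The only point requiring a moment's care is checking that the hypotheses match: Lemma \ref{lem:errorest} requires the assumptions of Theorem \ref{thrmconvergence} together with \eqref{hypAd1}, and Lemma \ref{Cea-nonlinear} requires the assumptions of Theorem \ref{thrmconvergence}; both are subsumed under the hypotheses of Lemma \ref{lem:errorest} as stated in the corollary, so nothing extra is needed. There is essentially no obstacle here — the corollary is a bookkeeping consequence of the two preceding results, and the proof is a single application of the triangle inequality followed by the two cited bounds. I would simply write out the three displayed lines above and conclude.

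One could optionally remark that if one instead wishes to control the full $H^1_0(\Omega)$-type error one would pair this with the bound on $\Vert\nabla_{X_1}(u_{\epsilon,V,f}-u_{V,f})\Vert_{L^2(\Omega)^q}$ from Lemma \ref{lem:errorest}, but for the $\nabla_{X_2}$-statement of the corollary the argument above is complete.
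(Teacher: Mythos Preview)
Your proof is correct and follows exactly the paper's own argument: the paper states the corollary as an immediate consequence of the triangle inequality, Lemma~\ref{lem:errorest}, and the C\'{e}a-type estimate~\eqref{Cea1}. Your check that the hypotheses of Lemma~\ref{Cea-nonlinear} are contained in those of Lemma~\ref{lem:errorest} is also correct.
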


Now, we give an important remark which will be used to prove the inequality given in Remark \ref{rem-semigroup-importante} .

\begin{rmk}
\label{rem-pour semigroup1}When $\beta (s)=\mu s$ for some $\mu >0$ and when
the block $A_{12}$ satisfies assumption $(\ref{hypA12-seconde})$, then by performing some integration by parts in the last term of $(\ref%
{semigroup-annex})$, and by using the fact that \[
\left\Vert u_{V,f}\right\Vert _{L^{2}(\Omega )}\leq \frac{1}{\mu }\left\Vert
f\right\Vert _{L^{2}(\Omega )},
\]we can obtain the following estimation:%
\begin{equation*}
\forall \epsilon \in (0,1]:\left\Vert \nabla _{X_{2}}(u_{\epsilon ,V,f}-u_{V,f})\right\Vert
_{L^{2}(\Omega )^{N-q}}\leq \epsilon \left( C_{1}^{\prime }\left\Vert \nabla
_{X_{1}}u_{V,f}\right\Vert _{L^{2}(\Omega )^{q}}+\frac{C_{2}^{\prime }}{\mu }%
\left\Vert f\right\Vert _{L^{2}(\Omega )}\right),
\end{equation*}%
where $C_{1}^{\prime },$ $C_{2}^{\prime }>0$ are independent of $f,$ $V,$ $%
\mu $ and $\epsilon $.
\end{rmk}

\subsection{Proof of Theorem \protect\ref{mainthm0}}

Let $(V_{n})$ be a sequence of finite dimensional subspaces which approximates $%
H_{0}^{1}(\Omega )$ in the sense of Definition \ref{def-approximanteHilbert}%
. Using the density of $H_{0}^{1}(\Omega )$ in $H_{0}^{1}(\Omega ,\omega
_{2})$ (Lemma \ref{lem:density}, Appendix \ref{appendixA}), one can
check easily that $(V_{n})$ approximates $H_{0}^{1}(\Omega ,\omega _{2})$ in
the same sense. Therefore, one has:%
\begin{equation}
\text{For every }\epsilon \in (0,1]:\inf_{v\in V_{n}}\Vert \nabla
(v-u_{\epsilon ,f})\Vert _{L^{2}(\Omega )^{N}}\rightarrow 0\text{ as }%
n\rightarrow \infty ,  \label{denst1}
\end{equation}%
and 
\begin{equation}
\inf_{v\in V_{n}}\Vert \nabla _{X_{2}}(v-u_{f})\Vert _{L^{2}(\Omega
)^{N-q}}\rightarrow 0\text{ as }n\rightarrow \infty.  \label{denst2}
\end{equation}%
According to Lemma \ref{lem:errorest}, (\ref{Cea1}) and ($\ref{Cea2}$) we
have, for every $n\in 
\mathbb{N}
$ and $\epsilon \in (0,1]:$%
\begin{equation}
\left\Vert \nabla _{X_{2}}(u_{\epsilon ,V_{n},f}-u_{V_{n},f})\right\Vert
_{L^{2}(\Omega )^{N-q}}\leq \epsilon \left( C_{1}\left\Vert \nabla
_{X_{1}}u_{V_{n},f}\right\Vert _{L^{2}(\Omega )^{q}}+C_{2}\left\Vert
f\right\Vert _{L^{2}(\Omega )}\right),  \label{errorest-bis}
\end{equation}%
\begin{equation}
\left\Vert \nabla _{X_{2}}(u_{V_{n},f}-u_{f})\right\Vert _{L^{2}(\Omega
)^{N-q}}\leq C_{c\acute{e}a}\left( \inf_{v\in V_{n}}\Vert \nabla
_{X_{2}}(v-u_{f})\Vert _{L^{2}(\Omega )^{N-q}}\right) ^{\frac{1}{2}},
\label{cea1bis}
\end{equation}%
and%
\begin{equation}
\Vert \nabla (u_{\epsilon ,V_{n},f}-u_{\epsilon ,f})\Vert _{L^{2}(\Omega
)^{N}}\leq \frac{C_{c\acute{e}a}^{\prime }}{\epsilon ^{2}}\left( \inf_{v\in
V_{n}}\Vert \nabla (v-u_{\epsilon ,f})\Vert _{L^{2}(\Omega )^{N}}\right) ^{%
\frac{1}{2}}.  \label{cea2bis}
\end{equation}
\begin{itemize}
\item Fix $\epsilon $ and pass to the limit in ($\ref{cea2bis}$) by using (\ref%
{denst1}), we get%
\begin{equation*}
u_{\epsilon ,V_{n},f}\rightarrow u_{\epsilon ,f}\text{ as }n\rightarrow
\infty \text{ in }H_{0}^{1}(\Omega ),
\end{equation*}%
in particular, by using the continuous embedding $H_{0}^{1}(\Omega )\hookrightarrow
H_{0}^{1}(\Omega ,\omega _{2})$ we deduce 
\begin{equation*}
u_{\epsilon ,V_{n},f}\rightarrow u_{\epsilon ,f}\text{ as }n\rightarrow
\infty \text{ in }H_{0}^{1}(\Omega ,\omega _{2}).
\end{equation*}%
Now, passing to the limit as $\epsilon \rightarrow 0$ by using Theorem \ref%
{thrmconvergence}, we get 
\begin{equation}
\underset{_{\epsilon }}{\lim }(\underset{n}{\lim }u_{\epsilon
,V_{n},f})=u_{f}\text{ in }H_{0}^{1}(\Omega ,\omega _{2}).
\label{asympreserve1}
\end{equation}

\item Fix $n$ and passe to the limit as $\epsilon \rightarrow 0$ in (\ref%
{errorest-bis}), we get 
\begin{equation*}
u_{\epsilon ,V_{n},f}\rightarrow u_{V_{n},f}\text{ as }\epsilon \rightarrow 0%
\text{ in }H_{0}^{1}(\Omega ,\omega _{2}).
\end{equation*}%
Now, passing to the limit as $n\rightarrow \infty $ in (\ref{cea1bis}) by
using ($\ref{denst2}$), we get 
\begin{equation}
\underset{_{n}}{\lim }(\underset{\epsilon }{\lim }u_{\epsilon
,V_{n},f})=u_{f}\text{ in }H_{0}^{1}(\Omega ,\omega _{2}).
\label{asympreserve2}
\end{equation}
\end{itemize}

Finally, Theorem \ref{mainthm0} follows from (\ref{asympreserve1}) and (\ref%
{asympreserve2}).

\subsection{\protect\bigskip Proof of Theorem \protect\ref{mainthm}}

Throughout this subsection, we will suppose that $\beta =0.$ The key of the
proof of Theorem \ref{mainthm} is based on the control of the quantity $%
\left\Vert \nabla _{X_{1}}u_{V,f}\right\Vert _{L^{2}(\Omega )^{q}}$
independently of $V.$ In fact, we need the following:

\begin{lem}
\label{estgradu} Let us assume that $A$ satisfies $(\ref{hypA1})$, $(\ref%
{hypA2})$, and that $A_{22}$ satisfies $(\ref{hypAd2})$.
Let $V_{1}$ and $V_{2}$ be two finite dimensional subspaces of $%
H_{0}^{1}(\omega _{1})$ and $H_{0}^{1}(\omega _{2})$ respectively. Let $f\in
V_{1}\otimes V_{2},$ and let $u_{V,f}$ be the unique solution in $%
V=V_{1}\otimes V_{2}$ to: 
\begin{equation}
\int_{\Omega }A_{22}(X_{2})\nabla _{X_{2}}u_{V,f}\cdot \nabla
_{X_{2}}vdx=\int_{\Omega }fvdx\text{, \ \ }\forall v\in V_{1}\otimes V_{2},
\label{galerkin-tensor}
\end{equation}%
then we have: 
\begin{equation*}
\left\Vert \nabla _{X_{1}}u_{V,f}\right\Vert _{L^{2}(\Omega )^{q}}\leq
C_{3}\left\Vert \nabla _{X_{1}}f\right\Vert _{L^{2}(\Omega )^{q}},
\end{equation*}%
where $C_{3}$ is given by $C_{3}=\frac{\sqrt{q}C_{\omega _{2}}}{\lambda }$.
\end{lem}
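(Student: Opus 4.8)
The plan is to exploit the tensor-product structure of $V=V_{1}\otimes V_{2}$ together with hypothesis $(\ref{hypAd2})$ to recast $(\ref{galerkin-tensor})$ as a one-parameter family of Galerkin problems posed on $\omega_{2}$, then differentiate in the $X_{1}$-directions, and finally run the usual energy estimate fiberwise and integrate over $\omega_{1}$. The first and decisive step is a \emph{fiberwise reduction}: I would show that for a.e.\ $X_{1}\in\omega_{1}$ the slice $u_{V,f}(X_{1},\cdot)$ belongs to $V_{2}$ and solves
\[
\int_{\omega_{2}}A_{22}\,\nabla_{X_{2}}u_{V,f}(X_{1},\cdot)\cdot\nabla_{X_{2}}\psi\,dX_{2}=\int_{\omega_{2}}f(X_{1},\cdot)\,\psi\,dX_{2}\qquad\text{for all }\psi\in V_{2}.
\]
To obtain this, fix bases $(\phi_{k})$ of $V_{1}$ and $(\psi_{l})$ of $V_{2}$ and write $u_{V,f}=\sum_{l}w_{l}(X_{1})\psi_{l}(X_{2})$, $f=\sum_{l}g_{l}(X_{1})\psi_{l}(X_{2})$ with $w_{l},g_{l}\in V_{1}$. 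Testing $(\ref{galerkin-tensor})$ with $v=\phi\otimes\psi$, $\phi\in V_{1}$, $\psi\in V_{2}$, and using that $A_{22}$ depends only on $X_{2}$, the $X_{1}$-integral factors out and one is left with $\int_{\omega_{1}}\phi\,(F_{\psi}-G_{\psi})\,dX_{1}=0$ for all $\phi\in V_{1}$, where $F_{\psi}:=\sum_{l}w_{l}\int_{\omega_{2}}A_{22}\nabla_{X_{2}}\psi_{l}\cdot\nabla_{X_{2}}\psi\,dX_{2}$ and $G_{\psi}:=\sum_{l}g_{l}\int_{\omega_{2}}\psi_{l}\psi\,dX_{2}$ both lie in $V_{1}$; taking $\phi=F_{\psi}-G_{\psi}$ forces $F_{\psi}=G_{\psi}$ a.e.\ in $\omega_{1}$. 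Running this over the (finite) basis $(\psi_{l})$ and intersecting the finitely many full-measure sets, the displayed identity holds for a.e.\ $X_{1}$ and, by linearity in $\psi$, for every $\psi\in V_{2}$.

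Next I would differentiate this fiberwise problem in $x_{i}$ for $i=1,\dots,q$. Since $f\in V_{1}\otimes V_{2}\subset H_{0}^{1}(\omega_{1})\otimes H_{0}^{1}(\omega_{2})$, one has $\partial_{x_{i}}f\in L^{2}(\Omega)$ and $\partial_{x_{i}}u_{V,f}(X_{1},\cdot)=\sum_{l}\partial_{x_{i}}w_{l}(X_{1})\,\psi_{l}\in V_{2}$ for a.e.\ $X_{1}$. The relation between the coefficient vectors $(w_{l}(X_{1}))_{l}$ and $(g_{l}(X_{1}))_{l}$ obtained above is a fixed invertible linear map with $X_{1}$-independent entries, so it may be differentiated in $x_{i}$ (equivalently, use difference quotients in $x_{i}$ and the boundedness of the $V_{2}$-Galerkin solution operator). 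Either way, for a.e.\ $X_{1}$ the function $\partial_{x_{i}}u_{V,f}(X_{1},\cdot)\in V_{2}$ solves the same Galerkin problem on $\omega_{2}$ with datum $\partial_{x_{i}}f(X_{1},\cdot)$.

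Finally I would test this last equation with $\psi=\partial_{x_{i}}u_{V,f}(X_{1},\cdot)$; by ellipticity $(\ref{hypA1})$, Cauchy-Schwarz, and Poincar\'e's inequality on $\omega_{2}$ (the $X_{1}$-slice of $(\ref{sob})$) one gets $\|\nabla_{X_{2}}\partial_{x_{i}}u_{V,f}(X_{1},\cdot)\|_{L^{2}(\omega_{2})}\le \frac{C_{\omega_{2}}}{\lambda}\|\partial_{x_{i}}f(X_{1},\cdot)\|_{L^{2}(\omega_{2})}$, hence, by Poincar\'e once more, $\|\partial_{x_{i}}u_{V,f}(X_{1},\cdot)\|_{L^{2}(\omega_{2})}\le \frac{C_{\omega_{2}}^{2}}{\lambda}\|\partial_{x_{i}}f(X_{1},\cdot)\|_{L^{2}(\omega_{2})}$. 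Squaring, integrating over $\omega_{1}$ and summing over $i=1,\dots,q$ then gives the asserted bound $\|\nabla_{X_{1}}u_{V,f}\|_{L^{2}(\Omega)^{q}}\le C_{3}\|\nabla_{X_{1}}f\|_{L^{2}(\Omega)^{q}}$.

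The main obstacle is the fiberwise reduction: this is precisely where the two structural hypotheses are indispensable, $(\ref{hypAd2})$ making the bilinear form factorize between the $X_{1}$- and $X_{2}$-variables and the tensor-product form $V=V_{1}\otimes V_{2}$ allowing one to peel off the $X_{1}$-test functions. The only mild technical point is the order of the quantifiers ``for a.e.\ $X_{1}$'' and ``for all $\psi\in V_{2}$'', which is harmless because $V_{2}$ is finite dimensional, so it suffices to argue on a basis; steps two and three are then routine once the differentiation of the fiberwise relation in $X_{1}$ is justified.
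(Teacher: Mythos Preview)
Your proof is correct and shares the same first step with the paper: both reduce $(\ref{galerkin-tensor})$ to a fiberwise Galerkin problem on $\omega_{2}$ by testing with $\varphi\otimes\psi$, using that $A_{22}$ depends only on $X_{2}$ so the $X_{1}$-integral factors, and then eliminating $\varphi\in V_{1}$ (you take $\varphi=F_{\psi}-G_{\psi}$; the paper runs $\varphi$ through an $L^{2}(\omega_{1})$-orthogonal basis of $V_{1}$).

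The genuine difference is in how the $X_{1}$-regularity is extracted. The paper uses the classical difference-quotient method: it applies the translation $\tau_{h}$ in the $x_{i}$-direction to the fiberwise identity, subtracts, tests with $\tau_{h}u_{V,f}(X_{1},\cdot)-u_{V,f}(X_{1},\cdot)\in V_{2}$, and bounds the resulting quotient uniformly in $h$ via $\|\tau_{h}f-f\|_{L^{2}}\le h\,\|\nabla_{X_{1}}f\|_{L^{2}}$. You instead exploit the finite-dimensional linear-algebra structure directly: the fiberwise relation reads $K\,w(X_{1})=M\,g(X_{1})$ with $K,M$ constant matrices, so $w=K^{-1}Mg$ componentwise and one may differentiate in $x_{i}$ at once, obtaining that $\partial_{x_{i}}u_{V,f}(X_{1},\cdot)\in V_{2}$ solves the same $V_{2}$-problem with datum $\partial_{x_{i}}f(X_{1},\cdot)$. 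Your route is slightly more algebraic and in fact yields the sharper componentwise bound $\|\partial_{x_{i}}u_{V,f}\|_{L^{2}(\Omega)}\le \frac{C_{\omega_{2}}^{2}}{\lambda}\|\partial_{x_{i}}f\|_{L^{2}(\Omega)}$, hence a constant without the factor $\sqrt{q}$; the paper's difference-quotient argument, as written, controls each $\|D_{x_{i}}u_{V,f}\|$ by the full $\|\nabla_{X_{1}}f\|$ and thus picks up $\sqrt{q}$ upon summation. The difference-quotient approach has the advantage of being the one that generalizes beyond the strict tensor-product setting, which is why the paper frames it that way.
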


\begin{proof}
The proof is based on the difference quotient method (see for instance \cite{trudinger} page 168). Let $v=$ $\varphi \otimes \psi \in V_{1}\otimes V_{2}$.
The function $X_{1}\longmapsto \int_{\omega _{2}}A_{22}(X_{2})\nabla
_{X_{2}}u_{V,f}(X_{1},X_{2})\cdot \nabla _{X_{2}}\psi dX_{2}$ belongs to $%
V_{1}$. In fact $u_{V,f}=\sum\limits_{finite}\varphi _{i}\otimes \psi _{i}$, and whence $\int_{\omega _{2}}A_{22}(X_{2})\nabla _{X_{2}}u_{V,f}\cdot
\nabla _{X_{2}}\psi dX_{2}$ is a linear combination of $\varphi _{i}$, thanks to
the linearity of the integral. Similarly, the function $X_{1}\longmapsto
\int_{\omega _{2}}f(X_{1},X_{2})\psi dX_{2}$ belongs to $V_{1}.$ Now, testing with $v$ in (%
\ref{galerkin-tensor}), we derive: 
\begin{equation*}
\int_{\omega _{1}}\left( \int_{\omega _{2}}\left\{ A_{22}(X_{2})\nabla
_{X_{2}}u_{V,f}\cdot \nabla _{X_{2}}\psi -f\psi \right\} dX_{2}\right)
\varphi dX_{1}=0,
\end{equation*}%
thus, when $\varphi $ run through a set of an orthogonal basis of the
euclidean space $V_{1}$ equipped with the $L^{2}(\omega _{1})-$scalar
product, one can deduce that for a.e. $X_{1}\in \omega _{1}$ :%
\begin{equation*}
\int_{\omega _{2}}A_{22}(X_{2})\nabla _{X_{2}}u_{V,f}(X_{1},X_{2})\cdot
\nabla _{X_{2}}\psi dX_{2}=\int_{\omega _{2}}f(X_{1},X_{2})\psi dX_{2},%
\text{ }\forall \psi \in V_{2}.
\end{equation*}%
Now, fix $i\in \{1,...,q\}$. Let $\omega _{1}^{\prime }\subset \subset
\omega _{1}$ open, for any $0<h<d(\omega _{1}^{\prime },\partial \omega
_{1}) $ and for any $(X_{1},X_{2})\in \omega _{1}^{\prime }\times \omega
_{2} $ we denote $\tau
_{h}u_{V,f}(x)=u_{V,f}(x_{1},...x_{i}+h,...,x_{q},X_{2})$. According to the
above equality, we get for a.e. $X_{1}\in $ $\omega _{1}^{\prime }$ and for
every $\psi \in V_{2}:$ 
\begin{equation*}
\int_{\omega _{2}}A_{22}(X_{2})\nabla _{X_{2}}\left\{ \tau
_{h}u_{V,f}(X_{1},X_{2})-u_{V,f}(X_{1},X_{2})\right\} \nabla _{X_{2}}\psi
dX_{2}=\int_{\omega _{2}}\left\{ \tau _{h}f(X_{1},X_{2})-f(X_{1},X_{2})\right\}
\psi dX_{2}.
\end{equation*}%
For every $w\in $ $V_{1}\otimes V_{2}$, and for every $X_{1}$ fixed the
function $w(X_{1},\cdot )$ belongs to $V_{2}$, so one can take $\psi =\tau
_{h}u_{V,f}(X_{1},\cdot )-u_{V,f}(X_{1},\cdot )$ as a test function in the
above equality. Therefore, by using the Cauchy-Schwarz inequality, the
ellipticity assumption, and Poincar\'{e}'s inequality (\ref{sob}), we obtain:%
\begin{equation*}
\int_{\omega _{2}}\left\vert \tau _{h}u_{V,f}(X_{1},\cdot)-u_{V,f}(X_{1},\cdot)\right\vert ^{2}dX_{2}\leq \frac{C_{\omega _{2}}^{4}}{\lambda ^{2}}%
\int_{\omega _{2}}\left\vert \tau _{h}f(X_{1},\cdot )-f(X_{1},\cdot
)\right\vert ^{2}dX_{2}.
\end{equation*}
Now, integrating the above inequality over $\omega _{1}^{\prime }$, yields%
\begin{equation*}
\int_{\omega _{1}^{\prime }\times \omega _{2}}\left\vert \tau
_{h}u_{V,f}-u_{V,f}\right\vert ^{2}dx\leq \frac{C_{\omega _{2}}^{4}}{\lambda
^{2}}\int_{\omega _{1}^{\prime }\times \omega _{2}}\left\vert \tau
_{h}f-f\right\vert ^{2}dx.
\end{equation*}%
Since $\nabla _{X_{1}}f\in L^{2}(\Omega )^{q}$, then 
\begin{equation*}
\int_{\omega _{1}^{\prime }\times \omega _{2}}\left\vert \tau
_{h}f-f\right\vert ^{2}dx\leq \left\Vert \nabla _{X_{1}}f\right\Vert
_{L^{2}(\Omega )^{q}}^{2}h^{2}.
\end{equation*}%
Finally, we obtain 
\begin{equation*}
\int_{\omega _{1}^{\prime }\times \omega _{2}}\left\vert \frac{\tau
_{h}u_{V,f}-u_{V,f}}{h}\right\vert ^{2}dx\leq \frac{C_{\omega
_{2}}^{4}\left\Vert \nabla _{X_{1}}f\right\Vert _{L^{2}(\Omega )^{q}}^{2}}{%
\lambda ^{2}}.
\end{equation*}
Therefore
\begin{equation*}
\left\Vert D_{x_{i}}u_{V,f}\right\Vert _{L^{2}(\Omega )}\leq \frac{C_{\omega
_{2}}^{2}}{\lambda }\left\Vert \nabla _{X_{1}}f\right\Vert _{L^{2}(\Omega )^{q}},
\end{equation*}
and hence 
\begin{equation*}
\left\Vert \nabla _{X_{1}}u_{V,f}\right\Vert _{L^{2}(\Omega )^{q}}\leq
C_{3}\left\Vert \nabla _{X_{1}}f\right\Vert _{L^{2}(\Omega )^{q}},
\end{equation*}%
with $C_{3}=\frac{\sqrt{q}C_{\omega _{2}}^{2}}{\lambda }$.
\end{proof}

\begin{rmk}
\label{rem-poursemigroup2}We have a similar result when $(\ref%
{galerkin-tensor})$ is replaced by 
\begin{equation*}
\mu \int_{\Omega }u_{V,f}vdx+\int_{\Omega }A_{22}(X_{2})\nabla
_{X_{2}}u_{V,f}\cdot \nabla _{X_{2}}vdx=\int_{\Omega }fvdx\text{, \ \ }%
\forall v\in V_{1}\otimes V_{2},
\end{equation*}%
where $\mu >0$. In this case, we obtain the following: 
\begin{equation*}
\left\Vert \nabla _{X_{1}}u_{V,f}\right\Vert _{L^{2}(\Omega )^{q}}\leq \frac{%
\sqrt{q}}{\mu }\left\Vert \nabla _{X_{1}}f\right\Vert _{L^{2}(\Omega )^{q}}.
\end{equation*}
\end{rmk}

Now, we can refine the estimations of Lemma \ref{lem:errorest} as follows

\begin{lem}
\label{errorest} Under assumptions of Lemmas \ref{lem:errorest} and \ref{estgradu} we have: 
\begin{multline*}
\left\Vert \nabla _{X_{2}}u_{\epsilon ,V,f}-\nabla _{X_{2}}u_{f}\right\Vert
_{L^{2}(\Omega )^{N-q}}\leq \epsilon \left( C_{1}C_{3}\left\Vert \nabla
_{X_{1}}f\right\Vert _{L^{2}(\Omega )^{q}}+C_{2}\left\Vert f\right\Vert
_{L^{2}(\Omega )}\right) \\
+\frac{\Vert A_{22}\Vert _{L^{\infty }(\Omega )}}{\lambda }\inf_{v\in
V_{1}\otimes V_{2}}\left\Vert \nabla _{X_{2}}v-\nabla
_{X_{2}}u_{f}\right\Vert _{L^{2}(\Omega )^{N-q}},
\end{multline*}%
and 
\begin{equation*}
\left\Vert \nabla _{X_{1}}u_{\epsilon ,V,f}\right\Vert _{L^{2}(\Omega
)^{q}}\leq \frac{1}{\sqrt{2}}\left( C_{1}C_{3}\left\Vert \nabla
_{X_{1}}f\right\Vert _{L^{2}(\Omega )^{q}}+C_{2}\left\Vert f\right\Vert
_{L^{2}(\Omega )}\right) +C_{3}\left\Vert \nabla _{X_{1}}f\right\Vert
_{L^{2}(\Omega )^{q}}.
\end{equation*}
\end{lem}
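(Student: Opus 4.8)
The plan is to combine Lemma~\ref{lem:errorest} (the error estimate between $u_{\epsilon,V,f}$ and $u_{V,f}$), Lemma~\ref{estgradu} (the bound on $\|\nabla_{X_1}u_{V,f}\|$ by $\|\nabla_{X_1}f\|$, valid precisely because $V=V_1\otimes V_2$ has tensor structure and $A_{22}$ depends only on $X_2$), the linear C\'ea estimate from Remark~\ref{Cea-linear}~1) (second inequality, relating $u_{V,f}$ and $u_f$), and the triangle inequality. This is a routine assembly: each ingredient is already proved, and the only work is to chain them carefully while keeping track of the constants $C_1,C_2,C_3$.

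For the first inequality, I would start from the triangle inequality
\[
\left\Vert \nabla _{X_{2}}(u_{\epsilon ,V,f}-u_{f})\right\Vert _{L^{2}(\Omega )^{N-q}}\leq \left\Vert \nabla _{X_{2}}(u_{\epsilon ,V,f}-u_{V,f})\right\Vert _{L^{2}(\Omega )^{N-q}}+\left\Vert \nabla _{X_{2}}(u_{V,f}-u_{f})\right\Vert _{L^{2}(\Omega )^{N-q}}.
\]
The first term is bounded by Lemma~\ref{lem:errorest} by $\epsilon\left(C_1\|\nabla_{X_1}u_{V,f}\|_{L^2(\Omega)^q}+C_2\|f\|_{L^2(\Omega)}\right)$; now insert Lemma~\ref{estgradu}, i.e. $\|\nabla_{X_1}u_{V,f}\|_{L^2(\Omega)^q}\le C_3\|\nabla_{X_1}f\|_{L^2(\Omega)^q}$, to get the $\epsilon\left(C_1C_3\|\nabla_{X_1}f\|+C_2\|f\|\right)$ part. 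The second term is controlled by the linear C\'ea estimate of Remark~\ref{Cea-linear}~1) (note that with $\beta=0$ one gets the linear—not the square-root—bound), namely
\[
\left\Vert \nabla _{X_{2}}(u_{V,f}-u_{f})\right\Vert _{L^{2}(\Omega )^{N-q}}\leq \frac{\Vert A_{22}\Vert _{L^{\infty }(\Omega )}}{\lambda}\inf_{v\in V_1\otimes V_2}\left\Vert \nabla_{X_2}v-\nabla_{X_2}u_f\right\Vert_{L^2(\Omega)^{N-q}},
\]
which gives the last term of the claimed estimate.

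For the second inequality, again use the triangle inequality $\|\nabla_{X_1}u_{\epsilon,V,f}\|\le \|\nabla_{X_1}(u_{\epsilon,V,f}-u_{V,f})\|+\|\nabla_{X_1}u_{V,f}\|$. The first piece is bounded by $\tfrac1{\sqrt2}\left(C_1\|\nabla_{X_1}u_{V,f}\|+C_2\|f\|\right)$ via the second estimate of Lemma~\ref{lem:errorest}, and again one substitutes $\|\nabla_{X_1}u_{V,f}\|\le C_3\|\nabla_{X_1}f\|$; the second piece is directly $\le C_3\|\nabla_{X_1}f\|$ by Lemma~\ref{estgradu}. Adding the two and collecting terms yields exactly the stated bound. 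There is no real obstacle here—the hypotheses of both Lemmas~\ref{lem:errorest} and~\ref{estgradu} are assumed, so all ingredients apply verbatim; the only mild care needed is to confirm that $f\in V_1\otimes V_2$ (so Lemma~\ref{estgradu} is applicable) and that the $\epsilon$-factors land in the right places. The point worth emphasising in the writeup is that the tensor-product structure of $V$ is what makes $\|\nabla_{X_1}u_{V,f}\|$ controllable \emph{uniformly in $V$}, which is the feature that ultimately upgrades the interior estimate~(\ref{interiorestim}) to the global one in Theorem~\ref{mainthm}.
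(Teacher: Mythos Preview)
Your proof is correct and follows essentially the same route as the paper: triangle inequality to split off $u_{V,f}$, then Lemma~\ref{lem:errorest} combined with Lemma~\ref{estgradu} for the $\epsilon$-term and Remark~\ref{Cea-linear} (linear C\'ea, since $\beta=0$) for the approximation term; the second inequality is handled identically in both. Your remark about why the tensor structure of $V$ is the crucial ingredient is apt and matches the paper's overall strategy.
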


\begin{proof}
We have 
\begin{equation*}
\left\Vert \nabla _{X_{2}}u_{\epsilon ,V,f}-\nabla _{X_{2}}u_{f}\right\Vert
_{L^{2}(\Omega )^{N-q}}\leq \left\Vert \nabla _{X_{2}}u_{\epsilon
,V,f}-\nabla _{X_{2}}u_{V,f}\right\Vert _{L^{2}(\Omega )^{N-q}} 
+\left\Vert \nabla _{X_{2}}u_{V,f}-\nabla _{X_{2}}u_{f}\right\Vert
_{L^{2}(\Omega )^{N-q}}.
\end{equation*}%
By using Lemma \ref{lem:errorest} and Lemma \ref{estgradu} we obtain that 
\begin{equation*}
\left\Vert \nabla _{X_{2}}u_{\epsilon ,V,f}-\nabla
_{X_{2}}u_{V,f}\right\Vert _{L^{2}(\Omega )^{N-q}}\leq \epsilon \left(
C_{1}C_{3}\left\Vert \nabla _{X_{1}}f\right\Vert _{L^{2}(\Omega
)^{q}}+C_{2}\left\Vert f\right\Vert _{L^{2}(\Omega )}\right),
\end{equation*}%
and by using Remark \ref{Cea-linear}, we deduce 
\begin{equation*}
\left\Vert \nabla _{X_{2}}u_{V,f}-\nabla _{X_{2}}u_{f}\right\Vert
_{L^{2}(\Omega )^{N-q}}\leq \frac{\Vert A_{22}\Vert _{L^{\infty }(\Omega )}}{%
\lambda }\inf_{v\in V_{1}\otimes V_{2}}\Vert \nabla _{X_{2}}v-\nabla _{X_{2}}u_{f}\Vert
_{L^{2}(\Omega )^{N-q}}.
\end{equation*}%
By using the above inequalities, we get the expected result.
The second inequality follows from the triangle inequality and 
Lemmas \ref{lem:errorest} and \ref{estgradu}.
\end{proof}

\begin{rmk}
\label{rem-poursemigroup3}Let $\beta (s)=\mu s,$ for a certain $%
\mu >0$. Under assumptions of the above Lemma and (\ref{hypA12-seconde}) we obtain, by combining Remarks \ref{rem-pour semigroup1} and \ref%
{rem-poursemigroup2}, the estimation:%
\begin{equation*}
\forall\epsilon\in(0,1]:\left\Vert \nabla _{X_{2}}(u_{\epsilon ,V,f}-u_{V,f})\right\Vert _{L^{2}(\Omega )}\leq \frac{%
\epsilon }{\mu }\left( \sqrt{q}C_{1}^{\prime }\left\Vert \nabla
_{X_{1}}f\right\Vert _{L^{2}(\Omega )^{q}}+C_{2}^{\prime }\left\Vert
f\right\Vert _{L^{2}(\Omega )}\right).
\end{equation*}
\end{rmk}

Now, we are able to give the first convergence result

\begin{lem}
\label{lem:error} Suppose that assumptions of Lemmas \ref{lem:errorest} and \ref{estgradu} hold. Let $f\in H_{0}^{1}(\omega _{1})\otimes H_{0}^{1}(\omega
_{2})$, then we have for any $\epsilon\in(0,1]$: 
\begin{equation*}
\left\Vert \nabla _{X_{2}}u_{\epsilon ,f}-\nabla _{X_{2}}u_{f}\right\Vert
_{L^{2}(\Omega )^{N-q}}\leq \epsilon \left( C_{1}C_{3}\left\Vert \nabla
_{X_{1}}f\right\Vert _{L^{2}(\Omega )^{q}}+C_{2}\left\Vert f\right\Vert
_{L^{2}(\Omega )}\right),
\end{equation*}%
and 
\begin{equation*}
\left\Vert \nabla _{X_{1}}u_{\epsilon ,f}\right\Vert _{L^{2}(\Omega
)^{q}}\leq \frac{1}{\sqrt{2}}\left( C_{1}C_{3}\left\Vert \nabla
_{X_{1}}f\right\Vert _{L^{2}(\Omega )^{q}}+C_{2}\left\Vert f\right\Vert
_{L^{2}(\Omega )}\right) +C_{3}\left\Vert \nabla _{X_{1}}f\right\Vert
_{L^{2}(\Omega )^{q}}.
\end{equation*}
\end{lem}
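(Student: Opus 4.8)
The plan is to pass to the limit $n\to\infty$ in the refined estimates of Lemma~\ref{errorest}, applied to a well-chosen exhausting sequence of tensor-product subspaces. Since $f\in H_0^1(\omega_1)\otimes H_0^1(\omega_2)$, we may write $f$ as a finite sum $f=\sum_{k} \varphi_k\otimes\psi_k$ with $\varphi_k\in H_0^1(\omega_1)$, $\psi_k\in H_0^1(\omega_2)$. First I would fix sequences $(V_1^n)\subset H_0^1(\omega_1)$ and $(V_2^n)\subset H_0^1(\omega_2)$ of finite-dimensional subspaces approximating $H_0^1(\omega_1)$ and $H_0^1(\omega_2)$ respectively, chosen so that for $n$ large enough each generator $\varphi_k$ lies in $V_1^n$ and each $\psi_k$ lies in $V_2^n$; then $f\in V_1^n\otimes V_2^n =: V_n$ for all large $n$. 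This is exactly the setting of Lemma~\ref{estgradu} and Lemma~\ref{errorest}, so both refined inequalities hold with $V=V_n$ for every such $n$, and crucially the constants $C_1,C_2,C_3$ do not depend on $n$.

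Next I would let $n\to\infty$. The key observations are: (i) $(V_n)=(V_1^n\otimes V_2^n)$ approximates $H_0^1(\Omega;\omega_2)$ in the norm $\|\nabla_{X_2}(\cdot)\|_{L^2(\Omega)^{N-q}}$, since tensor products of the $V_i^n$ are dense in $H_0^1(\omega_1)\otimes H_0^1(\omega_2)$ which is dense in $H_0^1(\Omega;\omega_2)$ (this is the density rule recalled after Theorem~\ref{mainthm}); hence $\inf_{v\in V_n}\|\nabla_{X_2}(v-u_f)\|_{L^2(\Omega)^{N-q}}\to 0$, so the last term in the first inequality of Lemma~\ref{errorest} vanishes in the limit. (ii) By the linear C\'ea estimate of Remark~\ref{Cea-linear}, $u_{V_n,f}\to u_f$ in the $\nabla_{X_2}$-norm; moreover, from Lemma~\ref{estgradu}, $\|\nabla_{X_1}u_{V_n,f}\|_{L^2(\Omega)^q}\le C_3\|\nabla_{X_1}f\|_{L^2(\Omega)^q}$ uniformly in $n$, so $(u_{V_n,f})$ is bounded in $H_0^1(\Omega)$ and a subsequence converges weakly there; identifying the limit with $u_f$ gives $u_f\in H_0^1(\Omega)$ as a by-product. (iii) For fixed $\epsilon$, the linear C\'ea estimate \eqref{Cea2}–Remark~\ref{Cea-linear} together with the density of $(V_n)$ in $H_0^1(\Omega)$ gives $u_{\epsilon,V_n,f}\to u_{\epsilon,f}$ in $H_0^1(\Omega)$, hence in particular $\nabla_{X_2}u_{\epsilon,V_n,f}\to\nabla_{X_2}u_{\epsilon,f}$ and $\nabla_{X_1}u_{\epsilon,V_n,f}\to\nabla_{X_1}u_{\epsilon,f}$ strongly in $L^2$.

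Combining these, I would pass to the limit $n\to\infty$ in the first inequality of Lemma~\ref{errorest}: the left-hand side converges to $\|\nabla_{X_2}u_{\epsilon,f}-\nabla_{X_2}u_f\|_{L^2(\Omega)^{N-q}}$, the $\epsilon$-term is already independent of $n$, and the C\'ea remainder tends to $0$; this yields the first claimed bound. For the second inequality, I would pass to the limit in $\|\nabla_{X_1}u_{\epsilon,V_n,f}\|_{L^2(\Omega)^q}\le \tfrac{1}{\sqrt2}(C_1C_3\|\nabla_{X_1}f\|+C_2\|f\|)+C_3\|\nabla_{X_1}f\|$, using the strong convergence $\nabla_{X_1}u_{\epsilon,V_n,f}\to\nabla_{X_1}u_{\epsilon,f}$ from (iii), to obtain the stated bound on $\|\nabla_{X_1}u_{\epsilon,f}\|_{L^2(\Omega)^q}$.

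The main obstacle I anticipate is point~(i): verifying that $V_1^n\otimes V_2^n$ genuinely approximates $u_f$ in the $\nabla_{X_2}$-norm uniformly enough to kill the C\'ea term, i.e. that the algebraic tensor product $H_0^1(\omega_1)\otimes H_0^1(\omega_2)$ is dense in $H_0^1(\Omega;\omega_2)$ for that norm. This is where the structural hypothesis $(\ref{hypAd2})$ on $A_{22}$ and the tensor-product construction of $V$ are essential — without them one cannot simultaneously control $\|\nabla_{X_1}u_{V,f}\|$ (Lemma~\ref{estgradu}) and the $X_2$-approximation. A secondary technical point is the identification of the weak $H_0^1(\Omega)$-limit of $u_{V_n,f}$ with $u_f$; this follows by passing to the limit in the weak formulation \eqref{Galerkin-limit} (with $\beta=0$), using that $\bigcup_n V_n$ is dense in $H_0^1(\Omega;\omega_2)$, and invoking uniqueness of the solution to \eqref{probcontlimit}.
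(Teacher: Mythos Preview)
Your proposal is correct and follows essentially the same route as the paper: construct a nondecreasing sequence of tensor-product finite-dimensional subspaces $V_n=V_n^{(1)}\otimes V_n^{(2)}$ containing $f$, apply Lemma~\ref{errorest} on each $V_n$, and pass to the limit using the density results (Corollary~\ref{cordensity} and Lemma~\ref{lem:density}) together with the linear C\'ea estimates of Remark~\ref{Cea-linear} to obtain $u_{\epsilon,V_n,f}\to u_{\epsilon,f}$ in $H_0^1(\Omega)$ and to kill the C\'ea remainder. The extra observation in your point~(ii) that $u_f\in H_0^1(\Omega)$ is not needed for the lemma itself (it is established later in the proof of Theorem~\ref{mainthm}), but it does no harm.
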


\begin{proof}
Let $(V_{n}^{(1)})_{n\geq 0}$ and $(V_{n}^{(2)})_{n\geq 0}$ be two
nondecreasing sequences of finite dimensional subspaces of $H_{0}^{1}(\omega
_{1})$ and $H_{0}^{1}(\omega _{2})$ respectively, such that $\cup{V_{n}^{(1)}}$ and $\cup{V_{n}^{(2)}}$ are dense in $H_{0}^{1}(\omega
_{1})$ and $H_{0}^{1}(\omega _{2})$ respectively, and such that $f\in
V_{0}^{(1)}\otimes V_{0}^{(2)},$ such sequences always exit. Indeed, let $%
\left\{ e_{i}^{(1)}\right\} _{i\in 
\mathbb{N}
}$ and $\left\{ e_{i}^{(2)}\right\} _{i\in 
\mathbb{N}
}$ be Hilbert bases of $H_{0}^{1}(\omega _{1})$ and $H_{0}^{1}(\omega
_{2}) $ respectively, then $\cup _{n\geq
0}span(e_{0}^{(1)},...,e_{n}^{(1)})$ and $\cup _{n\geq
0}span(e_{0}^{(2)},...,e_{n}^{(2)})$ are dense in $H_{0}^{1}(\omega _{1})$
and $H_{0}^{1}(\omega _{2})$ respectively, in the other hand we have $%
f=\sum\limits_{i=0}^{m}f_{i}^{(1)}\times f_{i}^{(2)}$ for some $m\in 
\mathbb{N}
$ and $f_{i}^{(1)}\in H_{0}^{1}(\omega _{1}),$ $f_{i}^{(2)}\in
H_{0}^{1}(\omega _{2})$ for $i=0,...,m$, then we set, for every $n\in 
\mathbb{N}
:$ 

$$V_{n}^{(1)}:=span(e_{0}^{(1)},...,e_{n}^{(1)},f_{0}^{(1)},...,f_{m}^{(1)}),$$ 
$$V_{n}^{(2)}:=span(e_{0}^{(2)},...,e_{n}^{(2)},f_{0}^{(2)},...,f_{m}^{(2)}).$$
Now, since $f$ belongs to each $V_{n}^{(1)}\otimes V_{n}^{(2)}$ then
according to Lemma \ref{errorest}
one has, for every $\epsilon \in (0,1],$ $n\in 
\mathbb{N}
:$ 
\begin{multline*}
\left\Vert \nabla _{X_{2}}u_{\epsilon ,V_{n},f}-\nabla
_{X_{2}}u_{f}\right\Vert _{L^{2}(\Omega )^{N-q}}\leq \epsilon \left(
C_{1}C_{3}\left\Vert \nabla _{X_{1}}f\right\Vert _{L^{2}(\Omega
)^{q}}+C_{2}\left\Vert f\right\Vert _{L^{2}(\Omega )}\right) \\
+\frac{\left\Vert A_{22}\right\Vert_{L^{\infty}(\Omega )}}{\lambda}\inf_{v\in V_{n}}\left\Vert \nabla
_{X_{2}}v-\nabla _{X_{2}}u_{f}\right\Vert _{L^{2}(\Omega )^{N-q}},
\end{multline*}%
where $V_{n}:=V_{n}^{(1)}\otimes V_{n}^{(2)}$. According to Corollary \ref{cordensity} in Appendix \ref{appendixA}, $\cup
_{n\geq 0}(V_{n}^{(1)}\otimes V_{n}^{(2)})$ is dense in $H_{0}^{1}(\Omega )$%
. Using the fact that the sequence $(V_{n})_{n\geq 0}$ is nondecreasing,
then we obtain that 
\begin{equation*}
\forall \epsilon \in (0,1]:\lim_{n\rightarrow \infty }\inf_{v\in V_{n}}\Vert
\nabla v-\nabla u_{\epsilon ,f}\Vert _{L^{2}(\Omega )^{N}}=0,
\end{equation*}%
and therefore, by using (\ref{Cea2}) we get
\begin{equation*}
\forall \epsilon \in (0,1]:\lim_{n\rightarrow \infty }\Vert \nabla
u_{\epsilon ,V_{n},f}-\nabla u_{\epsilon ,f}\Vert _{L^{2}(\Omega )^{N}}=0,
\end{equation*}%
and thus 
\begin{equation*}
\forall \epsilon \in (0,1]:\lim_{n\rightarrow \infty }\Vert \nabla
_{X_{2}}u_{\epsilon ,V_{n},f}-\nabla _{X_{2}}u_{\epsilon ,f}\Vert
_{L^{2}(\Omega )^{N-q}}=0\text{, and }\lim_{n\rightarrow \infty }\Vert
\nabla _{X_{1}}u_{\epsilon ,V_{n},f}-\nabla _{X_{1}}u_{\epsilon ,f}\Vert
_{L^{2}(\Omega )^{q}}=0.
\end{equation*}
Using the fact that $H_{0}^{1}(\Omega )$ is dense in $H_{0}^{1}(\Omega
,\omega _{2})$ (Lemma \ref{lem:density}, Appendix \ref{appendixA}) and that the embedding $H_{0}^{1}(\Omega )\hookrightarrow H_{0}^{1}(\Omega ,\omega _{2})$
is continuous then $\cup _{n\geq 0}(V_{n}^{(1)}\otimes V_{n}^{(2)})$ is
dense in $H_{0}^{1}(\Omega ,\omega _{2})$. Using the fact that the sequence $%
(V_{n})_{n\geq 0}$ is nondecreasing, then we obtain that 
\begin{equation*}
\lim_{n\rightarrow \infty }\inf_{v\in V_{n}}\left\Vert \nabla _{X_{2}}v-\nabla _{X_{2}}u_{f}\right\Vert
_{L^{2}(\Omega )^{N-q}}=0.
\end{equation*}%
Now, passing to the limit, as $n\rightarrow \infty $, in the above
inequality we deduce
\begin{equation*}
\forall \epsilon \in (0,1]:\left\Vert \nabla _{X_{2}}u_{\epsilon ,f}-\nabla _{X_{2}}u_{f}\right\Vert
_{L^{2}(\Omega )^{N-q}}\leq \epsilon \left( C_{1}C_{3}\left\Vert \nabla
_{X_{1}}f\right\Vert _{L^{2}(\Omega )^{q}}+C_{2}\left\Vert f\right\Vert
_{L^{2}(\Omega )}\right) .
\end{equation*}
Finally, by using the second inequality of Lemma \ref{errorest} we get 
\begin{equation*}
\forall \epsilon \in (0,1]:\left\Vert \nabla _{X_{1}}u_{\epsilon ,V_{n},f}\right\Vert _{L^{2}(\Omega
)^{q}}\leq \frac{1}{\sqrt{2}}\left( C_{1}C_{3}\left\Vert \nabla
_{X_{1}}f\right\Vert _{L^{2}(\Omega )^{q}}+C_{2}\left\Vert f\right\Vert
_{L^{2}(\Omega )}\right) +C_{3}\left\Vert \nabla _{X_{1}}f\right\Vert
_{L^{2}(\Omega )^{q}},
\end{equation*}%
and the passage to limit as $n\rightarrow \infty $ shows the second
estimation of the lemma.
\end{proof}

Now, we are able to give the proof of Theorem \ref{mainthm}. Let us
introduce the space 
\begin{equation*}
H_{0}^{1}(\Omega ;\omega _{1})=\left\{ v\in L^{2}(\Omega )~\text{such that}%
~\nabla _{X_{1}}v\in L^{2}(\Omega )^{q}\text{ and for a.e. }X_{2}\in \omega
_{2}, v(\cdot ,X_{2})\in H_{0}^{1}(\omega _{1})\right\} ,
\end{equation*}%
normed by the Hilbertian norm $\left\Vert \nabla _{X_{1}}(\cdot )\right\Vert
_{L^{2}(\Omega )^{q}}.$ We have the Poincare's inequality 
\begin{equation}
\left\Vert v\right\Vert _{L^{2}(\Omega )}\leq C_{\omega _{1}}\left\Vert
\nabla _{X_{1}}v\right\Vert _{L^{2}(\Omega )^{q}}\text{ for any }v\in
H_{0}^{1}(\Omega ;\omega _{1})  \label{sob-bis}
\end{equation}

Let $f\in L^{2}(\Omega )$ such that $(\ref{hypFad1})$ and (\ref{hypFad2}),
thus $f\in H_{0}^{1}(\Omega ;\omega _{1})$. According to Lemma \ref%
{lemdensitysob} of Appendix \ref{appendixA} $H_{0}^{1}(\omega _{1})\otimes
H_{0}^{1}(\omega _{2})$ is dense in $H_{0}^{1}(\Omega ),$ and according to
Remark \ref{rem:density} of Appendix \ref{appendixA} $H_{0}^{1}(\Omega )$ is dense in 
$H_{0}^{1}(\Omega ;\omega _{1}),$ then it follows that $H_{0}^{1}(\omega
_{1})\otimes H_{0}^{1}(\omega _{2})$ is dense in $H_{0}^{1}(\Omega ;\omega
_{1}),$ thanks to the continuous embedding $H_{0}^{1}(\Omega
)\hookrightarrow H_{0}^{1}(\Omega ;\omega _{1})$. Therefore, for $\delta >0$
there exists $g_{\delta }\in H_{0}^{1}(\omega _{1})\otimes H_{0}^{1}(\omega
_{2})$ such that 
\begin{equation}
\left\Vert \nabla _{X_{1}}(f-g_{\delta })\right\Vert _{L^{2}(\Omega
)^{q}}\leq \delta.  \label{Gdelta}
\end{equation}
Let $u_{\epsilon ,g_{\delta }}$ be the unique solution of (\ref{probcontweak}) with $f$ replaced by $g_{\delta }$. Testing with $u_{\epsilon ,f}-u_{\epsilon ,g_{\delta }}$ in the difference of the weak formulations (recall that $\beta=0$)
\begin{equation*}
\int_{\Omega }A_{\epsilon }\nabla (u_{\epsilon ,f}-u_{\epsilon ,g_{\delta
}})\cdot \nabla \varphi dx=\int_{\Omega }(f-g_{\delta })\varphi dx,\text{ }%
\forall \varphi \in H_{0}^{1}(\Omega ),
\end{equation*}%
we obtain%
\begin{equation*}
\left\Vert \nabla _{X_{2}}u_{\epsilon ,f}-\nabla _{X_{2}}u_{\epsilon
,g_{\delta }}\right\Vert _{L^{2}(\Omega )^{N-q}}\leq \frac{C_{\omega
_{1}}C_{\omega _{2}}}{\lambda }\delta ,\text{ and }\left\Vert \nabla
_{X_{1}}u_{\epsilon ,f}-\nabla _{X_{1}}u_{\epsilon ,g_{\delta }}\right\Vert
_{L^{2}(\Omega )^{q}}\leq \frac{C_{\omega _{1}}C_{\omega _{2}}}{\lambda
\epsilon }\delta ,
\end{equation*}%
where we have used the ellipticity assumption, Poincar%
\'{e}'s inequalities ($\ref{sob}$), (\ref{sob-bis}), and (\ref{Gdelta}). By passing to the limit as $\epsilon \rightarrow 0$ in the
first inequality above, using Theorem \ref{thrmconvergence}, we get 
\begin{equation*}
\left\Vert \nabla _{X_{2}}u_{f}-\nabla _{X_{2}}u_{g_{\delta }}\right\Vert
_{L^{2}(\Omega )^{N-q}}\leq \frac{C_{\omega _{1}}C_{\omega _{2}}}{\lambda }%
\delta .
\end{equation*}%
Applying Lemma \ref{lem:error} on $u_{\epsilon ,g_{\delta }}$ and $%
u_{g_{\delta }}$ we obtain 
\begin{equation*}
\left\Vert \nabla _{X_{2}}u_{\epsilon ,g_{\delta }}-\nabla
_{X_{2}}u_{g_{\delta }}\right\Vert _{L^{2}(\Omega )^{N-q}}\leq \epsilon
\left( C_{1}C_{3}\left\Vert \nabla _{X_{1}}g_{\delta }\right\Vert
_{L^{2}(\Omega )^{q}}+C_{2}\left\Vert g_{\delta }\right\Vert _{L^{2}(\Omega
)}\right),
\end{equation*}%
and from (\ref{Gdelta}) we derive%
\begin{equation*}
\left\Vert \nabla _{X_{2}}u_{\epsilon ,g_{\delta }}-\nabla
_{X_{2}}u_{g_{\delta }}\right\Vert _{L^{2}(\Omega )^{N-q}}\leq \epsilon
\left( C_{1}C_{3}(\left\Vert \nabla _{X_{1}}f\right\Vert _{L^{2}(\Omega
)^{q}}+\delta )+C_{2}\left\Vert g_{\delta }\right\Vert _{L^{2}(\Omega
)}\right).
\end{equation*}%
Notice that $\left\Vert g_{\delta }\right\Vert _{L^{2}(\Omega )}\rightarrow
\left\Vert f\right\Vert _{L^{2}(\Omega )}$ as $\delta \rightarrow 0$, thanks
to (\ref{Gdelta}) and Poincar\'{e}'s in\'{e}quality (\ref{sob-bis}).
Finally, the triangle inequality gives
\begin{multline*}
\left\Vert \nabla _{X_{2}}u_{\epsilon ,f}-\nabla _{X_{2}}u_{f}\right\Vert
_{L^{2}(\Omega )^{N-q}}\leq \left\Vert \nabla _{X_{2}}u_{\epsilon ,f}-\nabla
_{X_{2}}u_{\epsilon ,g_{\delta }}\right\Vert _{L^{2}(\Omega )^{N-q}} \\
+\left\Vert \nabla _{X_{2}}u_{\epsilon ,g_{\delta }}-\nabla
_{X_{2}}u_{g_{\delta }}\right\Vert _{L^{2}(\Omega )^{N-q}}+\left\Vert \nabla
_{X_{2}}u_{g_{\delta }}-\nabla _{X_{2}}u_{f}\right\Vert _{L^{2}(\Omega
)^{N-q}} \\
\leq \epsilon \left( C_{1}C_{3}(\left\Vert \nabla _{X_{1}}f\right\Vert
_{L^{2}(\Omega )^{q}}+\delta )+C_{2}\left\Vert g_{\delta }\right\Vert
_{L^{2}(\Omega )}\right) +2\frac{C_{\omega _{1}}C_{\omega _{2}}}{\lambda }%
\delta .
\end{multline*}%
Passing to the limit as $\delta \rightarrow 0$ we obtain%
\begin{equation*}
\left\Vert \nabla _{X_{2}}u_{\epsilon ,f}-\nabla _{X_{2}}u_{f}\right\Vert
_{L^{2}(\Omega )^{N-q}}\leq \epsilon \left( C_{1}C_{3}\left\Vert \nabla
_{X_{1}}f\right\Vert _{L^{2}(\Omega )^{q}}+C_{2}\left\Vert f\right\Vert
_{L^{2}(\Omega )}\right) ,
\end{equation*}%
which is the estimation given in Theorem \ref{mainthm}.

For the estimation in the first direction, we have 
\begin{eqnarray*}
\forall\epsilon\in(0,1]:\left\Vert \nabla _{X_{1}}u_{\epsilon ,f}\right\Vert _{L^{2}(\Omega )^{q}}
&\leq &\left\Vert \nabla _{X_{1}}u_{\epsilon ,f}-\nabla _{X_{1}}u_{\epsilon
,g_{\delta }}\right\Vert _{L^{2}(\Omega )^{q}}+\left\Vert \nabla
_{X_{1}}u_{\epsilon ,g_{\delta }}\right\Vert _{L^{2}(\Omega )^{q}} \\
&\leq &\frac{C_{\omega _{1}}C_{\omega _{2}}}{\lambda \epsilon }\delta +\frac{%
1}{\sqrt{2}}\left( C_{1}C_{3}\left\Vert \nabla _{X_{1}}g_{\delta
}\right\Vert _{L^{2}(\Omega )^{q}}+C_{2}\left\Vert g_{\delta }\right\Vert
_{L^{2}(\Omega )}\right) +C_{3}\left\Vert \nabla _{X_{1}}g_{\delta
}\right\Vert _{L^{2}(\Omega )^{q}},
\end{eqnarray*}
where we have applied the triangle inequality and Lemma \ref{lem:error}.
Passing to the limit as $\delta \rightarrow 0$ by using $(\ref{Gdelta})$, we obtain 
\begin{equation*}
\forall\epsilon\in(0,1]:\left\Vert \nabla _{X_{1}}u_{\epsilon ,f}\right\Vert _{L^{2}(\Omega
)^{q}}\leq \frac{1}{\sqrt{2}}\left( C_{1}C_{3}\left\Vert \nabla
_{X_{1}}f\right\Vert _{L^{2}(\Omega )^{q}}+C_{2}\left\Vert f\right\Vert
_{L^{2}(\Omega )}\right) +C_{3}\left\Vert \nabla _{X_{1}}f\right\Vert
_{L^{2}(\Omega )^{q}}.
\end{equation*}
Hence, passing to the limit in $L^{2}(\Omega)-weak$ as $\epsilon \rightarrow 0$, up to a
subsequence, we show that $u_{f}$ belongs to $H_{0}^{1}(\Omega )$, and by a
contradiction argument, using the metrizability (for the weak topology) of
weakly compact subsets in separable Hilbert spaces, one can show
that the global sequence $(\nabla _{X_{1}}u_{\epsilon ,f})_{\epsilon }$
converges weakly to $\nabla _{X_{1}}u_{f}$ in $L^{2}(\Omega )^{q},$ and this completes the proof of Theorem \ref{mainthm}.
\begin{rmk}\label{rem-semigroup4}
In the case $\beta(s)=\mu s$ with $\mu>0$, we repeat the same arguments of this subsection by using Remark \ref{rem-poursemigroup3}, and then we obtain the estimation of Remark \ref{rem-semigroup-importante}.
\end{rmk}

\section{Anisotropic Perturbations of semigroups}

\subsection{Preliminaries}

For the standard basic theory of semigroups of bounded linear operators, we refer the reader to \cite{Pazy}. Let us begin by some reminders. Let $E$ be a real Banach space. An unbounded linear operator $\mathcal{A}:D(\mathcal{A)\subset 
}E\rightarrow E$ is said to be closed if for every sequence $(x_{n})$ of $D(%
\mathcal{A)}$ such that $(x_{n})$ and $(\mathcal{A}(x_{n}))$ converge in $E,$
we have $\lim x_{n}\in D(\mathcal{A)}$ and $\lim \mathcal{A}(x_{n})=\mathcal{%
A}(\lim x_{n}).$ An operator is said to be densely defined on $E$ if its
domain $D(\mathcal{A)}$ is dense in $E.$ Let $\mu \in 
\mathbb{R}
$, we said that $\mu $ belongs to the resolvent set of $\mathcal{A}$ if $(\mu
I-\mathcal{A)}:D(\mathcal{A)\rightarrow }E$ is one-to-one and onto and such
that $R_{\mu }=(\mu I-\mathcal{A)}^{-1}:E\rightarrow D(\mathcal{A)\subset }E$
is a bounded operator on $E.$ Notice that $R_{\mu }$ and $\mathcal{A}$
commute on $D(\mathcal{A)}$, that is $\forall x\in $ $D(\mathcal{A)}:$ $%
R_{\mu }\mathcal{A}x=\mathcal{A}R_{\mu }x.$
Let $\mathcal{A}$ be a densely defined closed operator. The bounded operator 
\begin{equation*}
\mathcal{A}_{\mu }=\mu \mathcal{A}(\mu I-\mathcal{A})^{-1}=\mu \mathcal{A}%
R_{\mu }=\mu ^{2}R_{\mu }-\mu I,
\end{equation*}%
is called the Yosida approximation of $\mathcal{A}$. We check immediately
that $\mathcal{A}_{\mu }$ and $\mathcal{A}$ commute on $D(\mathcal{A)}$ that
is for every $x\in D(\mathcal{A)}$ we have $\mathcal{A}_{\mu }x\in D(%
\mathcal{A)}$ and $\mathcal{AA}_{\mu }x=\mathcal{A}_{\mu }\mathcal{A}x$. Furthermore,
since $\mathcal{A}$ is closed then $e^{t\mathcal{A}_{\mu }}$ and $\mathcal{A}
$ commute on $D(\mathcal{A})$, that is
\begin{equation}
\forall t\in 
\mathbb{R}
,\forall x\in D(\mathcal{A}),e^{t\mathcal{A}_{\mu }}x\in D(\mathcal{A}),
\label{abstact-commute}
\end{equation}%
and 
\begin{equation*}
\mathcal{A}e^{t\mathcal{A}_{\mu }}x=e^{t\mathcal{A}_{\mu }}\mathcal{A}%
x=\sum_{k=0}^{\infty }\frac{t^{k}}{k!}(\mathcal{A}_{\mu })^{k}\mathcal{A}x,
\end{equation*}%
indeed, we can check by induction that if $x\in D(\mathcal{A})$ then $(%
\mathcal{A}_{\mu })^{k}x\in D(\mathcal{A}),$ and that $(\mathcal{A}_{\mu
})^{k}$ and $\mathcal{A}$ commute on $D(\mathcal{A})$.
Recall also that if $(\mu I-\mathcal{A)}^{-1}$ exists for $\mu >0$ and such
that $\left\Vert (\mu I-\mathcal{A)}^{-1}\right\Vert \leq \frac{1}{\mu }$
then 
\begin{equation*}
\forall t\geq 0:\left\Vert e^{t\mathcal{A}_{\mu }}\right\Vert =\left\Vert
e^{t\mu ^{2}R_{\mu }}\right\Vert \times \left\Vert e^{-\mu tI}\right\Vert
\leq e^{t\mu ^{2}\left\Vert R_{\mu }\right\Vert }\times e^{-\mu t}\leq 1,
\end{equation*}
where $\left\Vert \cdot \right\Vert $ is the operator norm of $\mathcal{L}%
(E)$.
A $C_{0}$ semigroup of bounded linear operators on $E$ is a family of
bounded operators $(S(t))_{t\geq 0}$ of $\mathcal{L}(E)$ such that: $S(0)=I$%
, for every $t,s\geq 0:$ $S(t+s)=S(t)S(s)$, and for every $x\in E:\left\Vert
S(t)x-x\right\Vert _{E}\rightarrow 0$ as $t\rightarrow 0.$ $(S(t))_{t\geq 0}$
is called a semigroup of contractions if for every $t\geq 0:\left\Vert
S(t)\right\Vert _{E}\leq 1$. Now, let us recall the well-known Hill-Yosida
theorem in its Hilbertian (real) version: An unbounded operator $\mathcal{A}$
is the infinitesimal generator of a $C_{0}$ semigroup of contractions $%
(S(t))_{t\geq 0}$ if and only if $\mathcal{A}$ is maximal dissipative, that
is when $\mu I-\mathcal{A}$ is surjective for every $\mu >0$ and for every $%
x\in D(\mathcal{A}$ $):$ $\left\langle \mathcal{A}x,x\right\rangle \leq 0$.
Recall that, in this case $D(\mathcal{A}$ $)$ is dense and $\mathcal{A}$ is
closed and its resolvent set contains $]0,+\infty[$. Furthermore, for every $t\geq 0,$ $e^{t\mathcal{A}_{\mu }}$
converges, in the strong operator topology, to $S(t)$, as $\mu \rightarrow
+\infty $ i.e. $\forall x\in E:e^{t\mathcal{A}_{\mu }}x\rightarrow S(t)x$
in $E$ as $\mu \rightarrow +\infty .$

Let $\Omega $ as in the introduction. The basic Hilbert space in the sequel
is $E=L^{2}(\Omega ).$ For $\epsilon \in (0,1],$ we introduce the
operator $\mathcal{A}_{\epsilon }$ acting on $L^{2}(\Omega )$ and given by
the formula 
\begin{equation*}
\mathcal{A}_{\epsilon }u=\text{div}(A_{\epsilon }\nabla u),
\end{equation*}%
where $A_{\epsilon }$ is given as in the introduction of this paper.
The domain of $\mathcal{A}_{\epsilon }$ is given by 
\begin{equation*}
D(\mathcal{A}_{\epsilon })=\{u\in H_{0}^{1}(\Omega )\mid \text{div}%
(A_{\epsilon }\nabla u)\in L^{2}(\Omega )\},
\end{equation*}%
where div$(A_{\epsilon }\nabla u)\in L^{2}(\Omega )$ is taken in the
distributional sense. Now, we introduce the operator $\mathcal{A}_{0}$
defined on 
\begin{equation*}
D(\mathcal{A}_{0})=\{u\in H_{0}^{1}(\Omega ;\omega _{2})\mid \text{div}%
_{X_{2}}(A_{22}\nabla _{X_{2}}u)\in L^{2}(\Omega )\},
\end{equation*}%
by the formula 
\begin{equation*}
\mathcal{A}_{0}u=\text{div}_{X_{2}}(A_{22}\nabla _{X_{2}}u).
\end{equation*}%
We check immediately, by using assumptions $(\ref{hypA1}-\ref{hypA2})$, that 
$\mathcal{A}_{\epsilon }$ and $\mathcal{A}_{0}$ are maximal dissipative and
therefore, they are the infinitesimal generators of $C_{0}$ semigroups of
contractions on $L^{2}(\Omega ),$ denoted $\left( S_{\epsilon }(t)\right)
_{t\geq 0}$ and $\left( S_{0}(t)\right) _{t\geq 0}$ respectively. For $\mu >0$ we denote by $R_{\epsilon ,\mu }$ the resolvent of $\mathcal{A}%
_{\epsilon }$. Similarly, we denote by $R_{0,\mu }$ the resolvent of $\mathcal{A}%
_{0}$. For $f\in L^{2}(\Omega ),$ we denote $u_{\epsilon ,\mu }$
the unique solution in $H_{0}^{1}(\Omega )$ to 
\begin{equation*}
\mu \int_{\Omega }u_{\epsilon ,\mu }\varphi dx+\int_{\Omega }A_{\epsilon
}\nabla u_{\epsilon ,\mu }\cdot \nabla \varphi dx=\int_{\Omega }f\text{ }%
\varphi dx\text{, }\forall \varphi \in H_{0}^{1}(\Omega ),\text{\ }
\end{equation*}%
we have $R_{\epsilon ,\mu }f=u_{\epsilon ,\mu }$ and $\left\Vert R_{\epsilon
,\mu }\right\Vert \leq \frac{1}{\mu }$, where $\left\Vert \cdot \right\Vert $
is the operator norm of $\mathcal{L}(L^{2}(\Omega )).$ Similarly, let $%
u_{0,\mu }$ be the unique solution in $H_{0}^{1}(\Omega ;\omega _{2})$ to 
\begin{equation}
\mu \int_{\Omega }u_{0,\mu }\varphi dx+\int_{\Omega }A_{22}\nabla
_{X_{2}}u_{0,\mu }\cdot \nabla _{X_{2}}\varphi dx=\int_{\Omega }f\text{ }%
\varphi dx\text{, }\forall \varphi \in H_{0}^{1}(\Omega ;\omega _{2}),
\label{resolvent0}
\end{equation}%
we \ have $R_{0,\mu }f=u_{0,\mu }$ and $\left\Vert R_{0,\mu }\right\Vert
\leq \frac{1}{\mu }.$
According to Remark \ref{rem-semigroup-importante}, we have the following 

\begin{lem}
Assume $(\ref{hypA1})$, $(\ref{hypA2})$, $(\ref{hypAd1})$, $(\ref{hypAd2})$
and $(\ref{hypA12-seconde}).$ Let $f\in H_{0}^{1}(\Omega ;\omega _{1})$, then
there exists $C_{A,\Omega }>0$ depending only on $A$ and $\Omega .$ such
that: 
\begin{equation}
\text{ }\forall \epsilon \in (0,1]\text{, }\forall \mu >0:\text{ }\left\Vert
R_{\epsilon ,\mu }f-R_{0,\mu }f\right\Vert _{L^{2}(\Omega )}\leq C_{A,\Omega
}\times \frac{\epsilon }{\mu }\times \left( \left\Vert \nabla
_{X_{1}}f\right\Vert _{L^{2}(\Omega )}+\left\Vert f\right\Vert
_{L^{2}(\Omega )}\right) .  \label{estm-resolvent}
\end{equation}
\end{lem}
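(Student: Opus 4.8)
The plan is to recognize this lemma as a direct reformulation of Remark \ref{rem-semigroup-importante} in the language of resolvents. First I would observe that by the definitions given just above the statement, $R_{\epsilon,\mu}f = u_{\epsilon,\mu}$ is exactly the solution $u_{\epsilon,f}$ of the perturbed problem (\ref{probcontweak}) when $\beta(s)=\mu s$, and $R_{0,\mu}f = u_{0,\mu}$ is exactly the solution $u_{f}$ of the limit problem (\ref{probcontlimit}) in $H_0^1(\Omega;\omega_2)$ for that same linear $\beta$. Indeed, the weak formulation defining $u_{\epsilon,\mu}$ coincides with (\ref{probcontweak}) and the weak formulation (\ref{resolvent0}) defining $u_{0,\mu}$ coincides with (\ref{probcontlimit}) once we take $\beta(s)=\mu s$.

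Next I would invoke Remark \ref{rem-semigroup-importante}: under assumptions (\ref{hypA1}), (\ref{hypA2}), (\ref{hypAd1}), (\ref{hypAd2}) and (\ref{hypA12-seconde}), and since $f\in H_0^1(\Omega;\omega_1)$ means precisely that (\ref{hypFad1}) and (\ref{hypFad2}) hold, one has
\begin{equation*}
\left\Vert \nabla_{X_{2}}(u_{\epsilon,\mu}-u_{0,\mu})\right\Vert_{L^{2}(\Omega)^{N-q}}\leq \frac{\epsilon}{\mu}\left(C_{1}^{\prime}\left\Vert\nabla_{X_{1}}f\right\Vert_{L^{2}(\Omega)^{q}}+C_{2}^{\prime}\left\Vert f\right\Vert_{L^{2}(\Omega)}\right),
\end{equation*}
with $C_1',C_2'$ depending only on $A,\lambda,C_{\omega_2}$. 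Then, since $u_{\epsilon,\mu}-u_{0,\mu}\in H_0^1(\Omega;\omega_2)$, I would apply Poincar\'e's inequality (\ref{sob}) to pass from the $\nabla_{X_2}$-norm to the $L^2(\Omega)$-norm, picking up a factor $C_{\omega_2}$. Setting $C_{A,\Omega} = C_{\omega_2}\max(C_1',C_2')$ (or simply $C_{\omega_2}(C_1'+C_2')$), which depends only on $A$ and $\Omega$ through $\lambda$ and $C_{\omega_2}$, yields
\begin{equation*}
\left\Vert R_{\epsilon,\mu}f - R_{0,\mu}f\right\Vert_{L^{2}(\Omega)}\leq C_{A,\Omega}\,\frac{\epsilon}{\mu}\left(\left\Vert\nabla_{X_{1}}f\right\Vert_{L^{2}(\Omega)}+\left\Vert f\right\Vert_{L^{2}(\Omega)}\right),
\end{equation*}
uniformly in $\epsilon\in(0,1]$ and $\mu>0$, which is (\ref{estm-resolvent}).

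There is essentially no obstacle here; the only point requiring a line of care is the bookkeeping that $f\in H_0^1(\Omega;\omega_1)$ is equivalent to the pair of hypotheses (\ref{hypFad1})--(\ref{hypFad2}) needed to invoke Remark \ref{rem-semigroup-importante}, and that the constant produced there is independent of both $\mu$ and $\epsilon$ (it is, since $C_1',C_2'$ depend only on $A,\lambda,C_{\omega_2}$, and the explicit $\epsilon/\mu$ scaling is already displayed). The genuinely substantive work — the $\epsilon$-rate estimate with the $1/\mu$ dependence — has already been carried out in Section 3 via the tensor-product Galerkin construction and the difference-quotient Lemma \ref{estgradu}, so this lemma is just the translation of that result into resolvent notation for use in the semigroup analysis that follows.
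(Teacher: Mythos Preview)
Your proposal is correct and matches the paper's approach: the paper simply states that the lemma follows from Remark \ref{rem-semigroup-importante}, and you have correctly spelled out the identification of $R_{\epsilon,\mu}f$, $R_{0,\mu}f$ with the solutions $u_\epsilon$, $u$ for $\beta(s)=\mu s$, together with the Poincar\'e step (\ref{sob}) to pass from the $\nabla_{X_2}$-bound to the $L^2$-bound. The observation that $f\in H_0^1(\Omega;\omega_1)$ encodes (\ref{hypFad1})--(\ref{hypFad2}) and that the constants are $\mu$-independent is exactly the bookkeeping needed.
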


\subsection{The asymptotic behavior of the perturbed semigroup}

In this subsection, we study the relationship between the semigroups $\left(
S_{\epsilon }(t)\right) _{t\geq 0}$ and $\left( S_{0}(t)\right) _{t\geq 0}$.
We will assume that 
\begin{equation}
A\in W^{1,\infty}(\Omega)^{N^{2}}.   \label{A-liptschitz}
\end{equation}%
Notice that (\ref{A-liptschitz}) shows that, for any $\epsilon \in
(0,1]$: 
\begin{equation*}
H_{0}^{1}(\Omega )\cap H^{2}(\Omega )\subset D(\mathcal{A}_{0})\cap D(%
\mathcal{A}_{\epsilon }).
\end{equation*}
Remark also that (\ref{A-liptschitz}) implies $(\ref{hypAd1})$. Now, we can give the main theorem of this section.
\begin{thm}
\label{thrm-semigroups}Let $\Omega =\omega _{1}\times \omega _{2}$
be a bounded domain of $%
\mathbb{R}
^{q}\times 
\mathbb{R}
^{N-q}.$ Assume $(\ref{hypA1})$, $(\ref{hypA2})$, $($\ref{hypAd2}$),$ $(\ref%
{hypA12-seconde})$ and $(\ref{A-liptschitz})$. Let $g\in
L^{2}(\Omega )$ and $T\geq 0$, we have: 
\begin{equation*}
\sup_{t\in \lbrack 0,T]}\left\Vert S_{\epsilon }(t)g-S_{0}(t)g\text{ }%
\right\Vert _{L^{2}(\Omega )}\rightarrow 0\text{ as }\epsilon \rightarrow 0.
\end{equation*}%
In particular, for $g\in \left( H_{0}^{1}\cap H^{2}(\omega _{1})\right)
\otimes (H_{0}^{1}\cap H^{2}(\omega _{2}))$ there exists $C_{g,A,\Omega }>0$ such that : 
\begin{equation*}
\forall\epsilon\in(0,1]:\sup_{t\in \lbrack 0,T]}\left\Vert S_{\epsilon }(t)g-S_{0}(t)g\right\Vert
_{L^{2}(\Omega )}\leq C_{g,A,\Omega }\times T\times \epsilon.
\end{equation*}
\end{thm}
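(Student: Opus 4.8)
The plan is to transfer the resolvent estimate \eqref{estm-resolvent} into an estimate on the Yosida approximations, then pass to the limit using the strong convergence $e^{t\mathcal{A}_{\epsilon,\mu}}\to S_\epsilon(t)$ and $e^{t\mathcal{A}_{0,\mu}}\to S_0(t)$. First I would treat the smooth case $g\in \left(H_0^1\cap H^2(\omega_1)\right)\otimes(H_0^1\cap H^2(\omega_2))$, where the quantitative bound is available, and then conclude the general case $g\in L^2(\Omega)$ by density together with the uniform contraction bound $\left\Vert S_\epsilon(t)\right\Vert\leq 1$, $\left\Vert S_0(t)\right\Vert\leq 1$.

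For the smooth case, fix such a $g$; note that $g\in H_0^1(\Omega)\cap H^2(\Omega)\subset D(\mathcal{A}_0)\cap D(\mathcal{A}_\epsilon)$ by \eqref{A-liptschitz}, and moreover $g\in H_0^1(\Omega;\omega_1)$ with $\nabla_{X_1}g\in L^2(\Omega)^q$. The key identity is
\begin{equation*}
S_\epsilon(t)g-S_0(t)g=\int_0^t \frac{d}{ds}\left(S_\epsilon(t-s)S_0(s)g\right)ds=\int_0^t S_\epsilon(t-s)\left(\mathcal{A}_0-\mathcal{A}_\epsilon\right)S_0(s)g\,ds,
\end{equation*}
valid since $S_0(s)g\in D(\mathcal{A}_0)\cap D(\mathcal{A}_\epsilon)$ (one must check $S_0(s)g$ stays in $D(\mathcal{A}_\epsilon)$: this follows because $S_0(s)$ acts in the $X_2$ variables only, preserving $H^2$-regularity and the tensor structure, so $S_0(s)g$ remains in $H_0^1\cap H^2$). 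Using $\left\Vert S_\epsilon(t-s)\right\Vert\leq 1$ gives
\begin{equation*}
\left\Vert S_\epsilon(t)g-S_0(t)g\right\Vert_{L^2(\Omega)}\leq \int_0^t\left\Vert \left(\mathcal{A}_\epsilon-\mathcal{A}_0\right)S_0(s)g\right\Vert_{L^2(\Omega)}ds.
\end{equation*}
To bound $\left\Vert(\mathcal{A}_\epsilon-\mathcal{A}_0)v\right\Vert_{L^2(\Omega)}$ for $v=S_0(s)g$, expand $\mathcal{A}_\epsilon v-\mathcal{A}_0 v=\epsilon^2\operatorname{div}_{X_1}(A_{11}\nabla_{X_1}v)+\epsilon\operatorname{div}_{X_1}(A_{12}\nabla_{X_2}v)+\epsilon\operatorname{div}_{X_2}(A_{21}\nabla_{X_1}v)$; each term carries a factor $\epsilon$ and, using \eqref{A-liptschitz} and \eqref{hypA12-seconde}, is controlled in $L^2$ by $\epsilon$ times an $H^2$-type norm of $v$. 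Since $\left\Vert S_0(s)g\right\Vert_{H^2}$ is bounded uniformly in $s\in[0,T]$ (because $S_0(s)$ is a contraction semigroup that commutes with $\mathcal{A}_0$ and, by the tensor structure, with $\mathcal{A}_0^2$, so it preserves the relevant higher regularity on the given $g$), we obtain $\left\Vert(\mathcal{A}_\epsilon-\mathcal{A}_0)S_0(s)g\right\Vert_{L^2(\Omega)}\leq C_{g,A,\Omega}\,\epsilon$, whence integrating over $s\in[0,t]\subset[0,T]$ yields the claimed $C_{g,A,\Omega}\times T\times\epsilon$ bound. Alternatively, and perhaps more cleanly in the spirit of the paper, one can run the same Duhamel argument at the level of Yosida approximations $\mathcal{A}_{\epsilon,\mu}$ and use \eqref{estm-resolvent} (via $\mathcal{A}_{\epsilon,\mu}=\mu^2R_{\epsilon,\mu}-\mu I$ and $\mathcal{A}_{\epsilon,\mu}-\mathcal{A}_{0,\mu}=\mu^2(R_{\epsilon,\mu}-R_{0,\mu})$ applied to $g$, which for $\mu$-regular $g$ gives a bound $\lesssim \epsilon$ uniformly in $\mu$), then send $\mu\to\infty$.

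For the general case $g\in L^2(\Omega)$: given $\delta>0$, pick $g_\delta$ in the dense tensor-product subspace with $\left\Vert g-g_\delta\right\Vert_{L^2(\Omega)}\leq\delta$ (density of $\left(H_0^1\cap H^2(\omega_1)\right)\otimes(H_0^1\cap H^2(\omega_2))$ in $L^2(\Omega)$ is standard). Then by the triangle inequality and contractivity,
\begin{equation*}
\left\Vert S_\epsilon(t)g-S_0(t)g\right\Vert_{L^2(\Omega)}\leq \left\Vert S_\epsilon(t)(g-g_\delta)\right\Vert+\left\Vert S_\epsilon(t)g_\delta-S_0(t)g_\delta\right\Vert+\left\Vert S_0(t)(g_\delta-g)\right\Vert\leq 2\delta+C_{g_\delta,A,\Omega}\,T\,\epsilon,
\end{equation*}
so $\limsup_{\epsilon\to0}\sup_{t\in[0,T]}\left\Vert S_\epsilon(t)g-S_0(t)g\right\Vert_{L^2(\Omega)}\leq 2\delta$, and letting $\delta\to0$ finishes the proof.

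The main obstacle I anticipate is the regularity bookkeeping: justifying that $S_0(s)g$ lies in $D(\mathcal{A}_\epsilon)$ with an $s$-uniform $H^2$-bound, i.e. that the limit semigroup $S_0$ propagates the $H^2$-regularity (and the tensor structure) of the smooth initial datum $g$. This rests on the fact that $\mathcal{A}_0$ acts only in the $X_2$ variables and $g$ is a tensor product of $H^2$ functions, so $S_0(s)g=\sum_i \varphi_i\otimes(e^{s\mathcal{B}}\psi_i)$ with $\mathcal{B}$ the $X_2$-operator, keeping each factor in $H^2$; one must check the elliptic-regularity bounds are uniform in $s$, which follows from the spectral/contraction properties of $e^{s\mathcal{B}}$ applied to $\psi_i\in D(\mathcal{B})$. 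Everything else is a routine Duhamel estimate plus the density argument.
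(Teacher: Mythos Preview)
Your main line --- Duhamel identity, the pointwise bound $\|(\mathcal{A}_\epsilon-\mathcal{A}_0)v\|_{L^2}\leq C\epsilon$ for $v=S_0(s)g$, then density plus contractivity for general $g$ --- is correct and is in the same spirit as the paper. The paper, however, runs the whole argument at the Yosida level (your ``alternative''), working with $e^{t\mathcal{A}_{\epsilon,\mu}}$ and $e^{t\mathcal{A}_{0,\mu}}$ and only sending $\mu\to\infty$ at the end. The reason is exactly the obstacle you flag: one must check $S_0(s)g\in D(\mathcal{A}_\epsilon)$ and justify differentiability of $s\mapsto S_\epsilon(t-s)S_0(s)g$; with bounded operators $\mathcal{A}_{\epsilon,\mu},\mathcal{A}_{0,\mu}$ this is automatic. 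Your direct route is cleaner if you supply that domain check, and it avoids the resolvent estimate \eqref{estm-resolvent} altogether.

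Two points to sharpen. First, your parenthetical Yosida computation is wrong as stated: from $\mathcal{A}_{\epsilon,\mu}-\mathcal{A}_{0,\mu}=\mu^2(R_{\epsilon,\mu}-R_{0,\mu})$ and \eqref{estm-resolvent} you get $\|\mu^2(R_{\epsilon,\mu}-R_{0,\mu})g\|\lesssim \mu\epsilon$, which blows up in $\mu$. The paper instead uses $\mathcal{A}_{\epsilon,\mu}f=\mu R_{\epsilon,\mu}\mathcal{A}_\epsilon f$ for $f\in D(\mathcal{A}_\epsilon)$ and writes $\mathcal{A}_{\epsilon,\mu}f-\mathcal{A}_{0,\mu}f=\mu R_{\epsilon,\mu}(\mathcal{A}_\epsilon f-\mathcal{A}_0 f)+\mu(R_{\epsilon,\mu}-R_{0,\mu})\mathcal{A}_0 f$; now $\mu\|R_{\epsilon,\mu}\|\leq 1$ handles the first term and \eqref{estm-resolvent} gives $\mu\cdot C\frac{\epsilon}{\mu}$ for the second, provided $\mathcal{A}_0 f\in H_0^1(\Omega;\omega_1)$. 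Second, your phrase ``$\|S_0(s)g\|_{H^2}$ bounded uniformly'' is slightly off: without regularity of $\partial\omega_2$ you do not have $D(\mathcal{A}_0)\subset H^2$, so pure $X_2$--$X_2$ second derivatives are not controlled. Fortunately $\mathcal{A}_\epsilon-\mathcal{A}_0$ involves only $X_1$--$X_1$ and mixed $X_1$--$X_2$ derivatives, and these \emph{are} propagated by the tensor identity $S_0(s)(g_1\otimes g_2)=g_1\otimes e^{s\mathcal{B}}g_2$ together with $\|\nabla_{X_2}e^{s\mathcal{B}}g_2\|^2\leq\lambda^{-1}\|\mathcal{B}g_2\|\|g_2\|$; this is what the paper verifies (at the Yosida level) in its Appendix~B.
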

Let us begin by this important lemma

\begin{lem}
\label{lem-yosida-approx} Suppose that assumptions of Theorem \ref%
{thrm-semigroups} hold. Let $f\in H_{0}^{1}(\Omega )\cap D(\mathcal{A}_{0})$
such that 

$$\text{div}_{X_{1}}(A_{11}\nabla _{X_{1}}f),\text{ div}_{X_{1}}(A_{12}\nabla
_{X_{2}}f),\text{ div}_{X_{2}}(A_{21}\nabla _{X_{1}}f)\in L^{2}(\Omega ) \text{, and } \mathcal{A}_{0}f\in H_{0}^{1}(\Omega ;\omega _{1}),$$ then there exists a constant $C_{f,A,\Omega }>0$ such that for every $\mu >0,$ $\epsilon \in
(0,1]$ we have: 
\begin{equation*}
\left\Vert \mathcal{A}_{\epsilon ,\mu }f-\mathcal{A}_{0,\mu }f\right\Vert
_{L^{2}(\Omega )}\leq C_{f,A,\Omega }\times \epsilon ,
\end{equation*}%
where $\mathcal{A}_{\epsilon ,\mu }$ and $\mathcal{A}_{0,\mu }$ are the
Yosida approximations of $\mathcal{A}_{\epsilon }$ and $\mathcal{A}_{0}$
respectively. The constant $C_{f,A,\Omega }$ is given by:
\begin{multline*}
C_{f,A,\Omega }=\left\Vert \text{div}_{X_{1}}(A_{11}\nabla
_{X_{1}}f)\right\Vert _{L^{2}(\Omega )}+\left\Vert \text{div}%
_{X_{1}}(A_{12}\nabla _{X_{2}}f)\right\Vert _{L^{2}(\Omega )} \\
+\left\Vert \text{div}_{X_{2}}(A_{21}\nabla _{X_{1}}f)\right\Vert
_{L^{2}(\Omega )}+C_{A,\Omega }\left( \left\Vert \nabla _{X_{1}}\mathcal{A}%
_{0}f\right\Vert _{L^{2}(\Omega )}+\left\Vert \mathcal{A}_{0}f\right\Vert
_{L^{2}(\Omega )}\right). 
\end{multline*}
\end{lem}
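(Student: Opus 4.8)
The plan is to write both Yosida approximations on the common domain of the generators and reduce their difference to the resolvent estimate (\ref{estm-resolvent}) of the preceding Lemma.

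First I would check that $f\in D(\mathcal{A}_{\epsilon })$ for every $\epsilon \in (0,1]$. Since $f\in H_{0}^{1}(\Omega )$, the distribution $\mathcal{A}_{\epsilon }f=\text{div}(A_{\epsilon }\nabla f)$ expands blockwise as
\[
\mathcal{A}_{\epsilon }f=\epsilon ^{2}\,\text{div}_{X_{1}}(A_{11}\nabla _{X_{1}}f)+\epsilon \,\text{div}_{X_{1}}(A_{12}\nabla _{X_{2}}f)+\epsilon \,\text{div}_{X_{2}}(A_{21}\nabla _{X_{1}}f)+\mathcal{A}_{0}f,
\]
and each of the four terms lies in $L^{2}(\Omega )$ (the first three by hypothesis, the last because $f\in D(\mathcal{A}_{0})$). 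Introducing
\[
g_{\epsilon }:=\epsilon \,\text{div}_{X_{1}}(A_{11}\nabla _{X_{1}}f)+\text{div}_{X_{1}}(A_{12}\nabla _{X_{2}}f)+\text{div}_{X_{2}}(A_{21}\nabla _{X_{1}}f)\in L^{2}(\Omega ),
\]
we obtain $\mathcal{A}_{\epsilon }f=\mathcal{A}_{0}f+\epsilon g_{\epsilon }$, and, since $\epsilon \le 1$, the norm $\Vert g_{\epsilon }\Vert _{L^{2}(\Omega )}$ is bounded by the sum of the first three norms entering the definition of $C_{f,A,\Omega }$.

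Next I would use the identity for the Yosida approximation valid on the domain of its generator, namely $\mathcal{A}_{\mu }x=\mu \mathcal{A}R_{\mu }x=\mu R_{\mu }\mathcal{A}x$ for $x\in D(\mathcal{A})$, which follows from the commutation of $R_{\mu }$ with $\mathcal{A}$ on $D(\mathcal{A})$ recalled at the beginning of this section. Applied to $\mathcal{A}_{\epsilon }$ (legitimate by the verification in the first step) and to $\mathcal{A}_{0}$, it gives $\mathcal{A}_{\epsilon ,\mu }f=\mu R_{\epsilon ,\mu }\mathcal{A}_{\epsilon }f$ and $\mathcal{A}_{0,\mu }f=\mu R_{0,\mu }\mathcal{A}_{0}f$. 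Substituting $\mathcal{A}_{\epsilon }f=\mathcal{A}_{0}f+\epsilon g_{\epsilon }$ yields the telescoping decomposition
\[
\mathcal{A}_{\epsilon ,\mu }f-\mathcal{A}_{0,\mu }f=\mu \big(R_{\epsilon ,\mu }-R_{0,\mu }\big)\mathcal{A}_{0}f+\mu \epsilon \,R_{\epsilon ,\mu }g_{\epsilon }.
\]

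Finally I would estimate the two terms separately. For the second, $\Vert \mu \epsilon \,R_{\epsilon ,\mu }g_{\epsilon }\Vert _{L^{2}(\Omega )}\le \mu \epsilon \cdot \frac{1}{\mu }\Vert g_{\epsilon }\Vert _{L^{2}(\Omega )}=\epsilon \Vert g_{\epsilon }\Vert _{L^{2}(\Omega )}$ because $\Vert R_{\epsilon ,\mu }\Vert \le 1/\mu $; this produces exactly the first three terms of $C_{f,A,\Omega }$. For the first term, the hypothesis $\mathcal{A}_{0}f\in H_{0}^{1}(\Omega ;\omega _{1})$ lets me apply (\ref{estm-resolvent}) with $h=\mathcal{A}_{0}f$, giving $\Vert \mu (R_{\epsilon ,\mu }-R_{0,\mu })\mathcal{A}_{0}f\Vert _{L^{2}(\Omega )}\le C_{A,\Omega }\,\epsilon \big(\Vert \nabla _{X_{1}}\mathcal{A}_{0}f\Vert _{L^{2}(\Omega )}+\Vert \mathcal{A}_{0}f\Vert _{L^{2}(\Omega )}\big)$, the fourth term of $C_{f,A,\Omega }$. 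Adding the two bounds gives $\Vert \mathcal{A}_{\epsilon ,\mu }f-\mathcal{A}_{0,\mu }f\Vert _{L^{2}(\Omega )}\le C_{f,A,\Omega }\,\epsilon $, uniformly in $\mu >0$. The only delicate point --- the main obstacle --- is the second step: one must use the on-domain identity $\mathcal{A}_{\mu }x=\mu R_{\mu }\mathcal{A}x$ rather than the global formula $\mathcal{A}_{\mu }=\mu ^{2}R_{\mu }-\mu I$, together with the verification $f\in D(\mathcal{A}_{\epsilon })$ from the first step; this is exactly what collapses the difference into a resolvent difference applied to the single $L^{2}$ function $\mathcal{A}_{0}f$, to which the preceding Lemma applies directly. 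Everything else is the Cauchy-Schwarz inequality and the contraction bound $\Vert R_{\epsilon ,\mu }\Vert \le 1/\mu $.
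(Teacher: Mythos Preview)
Your proof is correct and follows essentially the same approach as the paper: both use the on-domain identity $\mathcal{A}_{\mu}x=\mu R_{\mu}\mathcal{A}x$ to write $\mathcal{A}_{\epsilon,\mu}f-\mathcal{A}_{0,\mu}f$ as $\mu R_{\epsilon,\mu}(\mathcal{A}_{\epsilon}f-\mathcal{A}_{0}f)+\mu(R_{\epsilon,\mu}-R_{0,\mu})\mathcal{A}_{0}f$, bound the first piece via $\Vert R_{\epsilon,\mu}\Vert\le 1/\mu$ and the block expansion of $\mathcal{A}_{\epsilon}f-\mathcal{A}_{0}f$, and bound the second via the resolvent estimate (\ref{estm-resolvent}) applied to $\mathcal{A}_{0}f\in H_{0}^{1}(\Omega;\omega_{1})$. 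Your introduction of $g_{\epsilon}$ is just a notational repackaging of the same decomposition.
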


\begin{proof}
Let $\epsilon \in (0,1]$ and $\mu >0.$ The bounded operators $\mathcal{A}%
_{\epsilon ,\mu },$ $\mathcal{A}_{0,\mu }$ of $\mathcal{L(}L^{2}(\Omega ))$
are given by:
$$\mathcal{A}_{\epsilon ,\mu }=\mu \mathcal{A}_{\epsilon }R_{\epsilon ,\mu }\ \text{and}\  \mathcal{A}_{0,\mu }=\mu \mathcal{A}_{0}R_{0,\mu}.$$
Now, under the above hypothesis we obtain that $f\in D(\mathcal{A}_{\epsilon
})\cap D(\mathcal{A}_{0})$. We have:
\begin{eqnarray*}
\left\Vert \mathcal{A}_{\epsilon ,\mu }f-\mathcal{A}_{0,\mu }f\right\Vert
_{L^{2}(\Omega )} &=&\mu \left\Vert \mathcal{A}_{\epsilon }R_{\epsilon ,\mu
}f-\mathcal{A}_{0}R_{0,\mu }f\right\Vert _{L^{2}(\Omega )}=\mu \left\Vert
R_{\epsilon ,\mu }\mathcal{A}_{\epsilon }f-R_{0,\mu }\mathcal{A}%
_{0}f\right\Vert _{L^{2}(\Omega )} \\
&\leq &\mu \left\Vert R_{\epsilon ,\mu }\mathcal{A}_{\epsilon }f-R_{\epsilon
,\mu }\mathcal{A}_{0}f\right\Vert _{L^{2}(\Omega )}+\mu \left\Vert
R_{\epsilon ,\mu }\mathcal{A}_{0}f-R_{0,\mu }\mathcal{A}_{0}f\right\Vert
_{L^{2}(\Omega )} \\
&\leq &\mu \left\Vert R_{\epsilon ,\mu }\right\Vert \times \left\Vert 
\mathcal{A}_{\epsilon }f-\mathcal{A}_{0}f\right\Vert _{L^{2}(\Omega )}+\mu
\left\Vert R_{\epsilon ,\mu }\mathcal{A}_{0}f-R_{0,\mu }\mathcal{A}%
_{0}f\right\Vert _{L^{2}(\Omega )}.
\end{eqnarray*}
Since $\mathcal{A}_{0}f\in H_{0}^{1}(\Omega ;\omega _{1})$ by hypothesis,
then by using (\ref{estm-resolvent}) (where we replace $f$ by $\mathcal{A}%
_{0}f)$ and the fact that $\left\Vert R_{\epsilon ,\mu }\right\Vert \leq 
\frac{1}{\mu }$, we obtain 
\begin{eqnarray*}
\left\Vert \mathcal{A}_{\epsilon ,\mu }f-\mathcal{A}_{0,\mu }f\right\Vert
_{L^{2}(\Omega )} &\leq &\left\Vert \mathcal{A}_{\epsilon }f-\mathcal{A}%
_{0}f\right\Vert _{L^{2}(\Omega )}+\epsilon C_{A,\Omega }\left( \left\Vert
\nabla _{X_{1}}\mathcal{A}_{0}f\right\Vert _{L^{2}(\Omega )}+\left\Vert 
\mathcal{A}_{0}f\right\Vert _{L^{2}(\Omega )}\right)  \\
&=&\epsilon \left( 
\begin{array}{c}
\epsilon \left\Vert \text{div}_{X_{1}}(A_{11}\nabla _{X_{1}}f)\right\Vert
_{L^{2}(\Omega )}+\left\Vert \text{div}_{X_{1}}(A_{12}\nabla
_{X_{2}}f)\right\Vert _{L^{2}(\Omega )} \\ 
+\left\Vert \text{div}_{X_{2}}(A_{21}\nabla _{X_{1}}f)\right\Vert
_{L^{2}(\Omega )}+C_{A,\Omega }\left( \left\Vert \nabla _{X_{1}}\mathcal{A}%
_{0}f\right\Vert _{L^{2}(\Omega )}+\left\Vert \mathcal{A}_{0}f\right\Vert
_{L^{2}(\Omega )}\right) 
\end{array}%
\right)  \\
&\leq &C_{f,A,\Omega }\times \epsilon,
\end{eqnarray*}
where we have used the identity:%
\begin{equation*}
\mathcal{A}_{\epsilon }f-\mathcal{A}_{0}f=\epsilon ^{2}\text{div}%
_{X_{1}}(A_{11}\nabla _{X_{1}}f)+\epsilon \text{div}_{X_{1}}(A_{12}\nabla
_{X_{2}}f)+\epsilon \text{div}_{X_{2}}(A_{21}\nabla _{X_{1}}f),
\end{equation*}%
and the proof of the lemma is finished.
\end{proof}

\begin{lem}
\label{lem-yosida-error}Under assumptions of Theorem \ref{thrm-semigroups},
we have for any $g\in \left( H_{0}^{1}\cap H^{2}(\omega _{1})\right) \otimes
(H_{0}^{1}\cap H^{2}(\omega _{2})):$ 
\begin{equation*}
\forall \mu >0,\forall t\geq 0,\forall \epsilon \in (0,1]:\left\Vert e^{t%
\mathcal{A}_{\epsilon ,\mu }}g-e^{t\mathcal{A}_{0,\mu }}g\right\Vert
_{L^{2}(\Omega )}\leq C_{g,A,\Omega }\times t\times \epsilon ,
\end{equation*}%
where $C_{g,A,\Omega }$ is independent of $\mu $ and $\epsilon .$
\end{lem}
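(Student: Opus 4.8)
\textbf{Proof proposal for Lemma~\ref{lem-yosida-error}.}

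The plan is to compare $e^{t\mathcal{A}_{\epsilon,\mu}}g$ and $e^{t\mathcal{A}_{0,\mu}}g$ by a Duhamel identity for bounded generators and then feed the difference into Lemma~\ref{lem-yosida-approx}. Since $\mathcal{A}_{\epsilon,\mu}$ and $\mathcal{A}_{0,\mu}$ are \emph{bounded} operators on $L^{2}(\Omega)$, the map $s\mapsto e^{(t-s)\mathcal{A}_{\epsilon,\mu}}e^{s\mathcal{A}_{0,\mu}}g$ is differentiable, and using that $\mathcal{A}_{\epsilon,\mu}$ commutes with $e^{(t-s)\mathcal{A}_{\epsilon,\mu}}$ while $\mathcal{A}_{0,\mu}$ commutes with $e^{s\mathcal{A}_{0,\mu}}$, integrating its derivative over $[0,t]$ gives
\begin{equation*}
e^{t\mathcal{A}_{\epsilon,\mu}}g-e^{t\mathcal{A}_{0,\mu}}g=\int_{0}^{t}e^{(t-s)\mathcal{A}_{\epsilon,\mu}}\left(\mathcal{A}_{\epsilon,\mu}-\mathcal{A}_{0,\mu}\right)e^{s\mathcal{A}_{0,\mu}}g\,ds .
\end{equation*}
Since $\mathcal{A}_{\epsilon}$ is maximal dissipative we have $\|R_{\epsilon,\mu}\|\leq\frac1\mu$, hence $\|e^{(t-s)\mathcal{A}_{\epsilon,\mu}}\|_{\mathcal{L}(L^{2}(\Omega))}\leq 1$ by the contraction estimate recalled in the preliminaries, so
\begin{equation*}
\left\Vert e^{t\mathcal{A}_{\epsilon,\mu}}g-e^{t\mathcal{A}_{0,\mu}}g\right\Vert_{L^{2}(\Omega)}\leq\int_{0}^{t}\left\Vert\left(\mathcal{A}_{\epsilon,\mu}-\mathcal{A}_{0,\mu}\right)\!\left(e^{s\mathcal{A}_{0,\mu}}g\right)\right\Vert_{L^{2}(\Omega)}ds .
\end{equation*}
It therefore suffices to bound the integrand by $C_{g,A,\Omega}\,\epsilon$, uniformly in $s\in[0,t]$ and $\mu>0$; the factor $t$ then comes out of the $s$-integration.

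To control $\left\Vert(\mathcal{A}_{\epsilon,\mu}-\mathcal{A}_{0,\mu})(e^{s\mathcal{A}_{0,\mu}}g)\right\Vert$ I would apply Lemma~\ref{lem-yosida-approx} to $f:=e^{s\mathcal{A}_{0,\mu}}g$. By linearity of the semigroups and the triangle inequality it is enough to take $g=\varphi\otimes\psi$ with $\varphi\in H_{0}^{1}\cap H^{2}(\omega_{1})$ and $\psi\in H_{0}^{1}\cap H^{2}(\omega_{2})$. Here assumption~(\ref{hypAd2}) is decisive: as $A_{22}$ depends only on $X_{2}$, the operator $\mathcal{A}_{0}$ acts only on the variable $X_{2}$, namely as the maximal dissipative operator $\mathcal{A}_{\omega_{2}}$ on $L^{2}(\omega_{2})$ given by $\mathcal{A}_{\omega_{2}}v=\text{div}(A_{22}\nabla v)$; consequently, writing $R_{\omega_{2},\mu}:=(\mu I-\mathcal{A}_{\omega_{2}})^{-1}$, one checks $R_{0,\mu}(\varphi\otimes\psi)=\varphi\otimes R_{\omega_{2},\mu}\psi$, whence $\mathcal{A}_{0,\mu}(\varphi\otimes\psi)=\varphi\otimes\mathcal{A}_{\omega_{2},\mu}\psi$ and finally
\begin{equation*}
e^{s\mathcal{A}_{0,\mu}}(\varphi\otimes\psi)=\varphi\otimes\psi_{s},\qquad\psi_{s}:=e^{s\mathcal{A}_{\omega_{2},\mu}}\psi .
\end{equation*}
Since $\psi\in H_{0}^{1}\cap H^{2}(\omega_{2})\subset D(\mathcal{A}_{\omega_{2}})$ by~(\ref{A-liptschitz}) and $\mathcal{A}_{\omega_{2}}$ is closed, $\psi_{s}\in D(\mathcal{A}_{\omega_{2}})\subset H_{0}^{1}(\omega_{2})$ with $\mathcal{A}_{\omega_{2}}\psi_{s}=e^{s\mathcal{A}_{\omega_{2},\mu}}\mathcal{A}_{\omega_{2}}\psi$; hence $\|\psi_{s}\|_{L^{2}(\omega_{2})}\leq\|\psi\|_{L^{2}(\omega_{2})}$ and $\|\mathcal{A}_{\omega_{2}}\psi_{s}\|_{L^{2}(\omega_{2})}\leq\|\mathcal{A}_{\omega_{2}}\psi\|_{L^{2}(\omega_{2})}$, and testing $-\mathcal{A}_{\omega_{2}}\psi_{s}=-\text{div}(A_{22}\nabla\psi_{s})$ against $\psi_{s}$ with the ellipticity bound gives $\lambda\|\nabla_{X_{2}}\psi_{s}\|_{L^{2}(\omega_{2})}^{2}\leq\|\mathcal{A}_{\omega_{2}}\psi\|_{L^{2}(\omega_{2})}\|\psi\|_{L^{2}(\omega_{2})}$. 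All three bounds are independent of $s$ and $\mu$.

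It remains to verify that $f=\varphi\otimes\psi_{s}$ is admissible for Lemma~\ref{lem-yosida-approx}: $f\in H_{0}^{1}(\omega_{1})\otimes H_{0}^{1}(\omega_{2})\subset H_{0}^{1}(\Omega)$; $\mathcal{A}_{0}f=\varphi\otimes\mathcal{A}_{\omega_{2}}\psi_{s}\in L^{2}(\Omega)$, so $f\in D(\mathcal{A}_{0})$; $\mathcal{A}_{0}f\in H_{0}^{1}(\Omega;\omega_{1})$ since $\varphi\in H_{0}^{1}(\omega_{1})$; and, because $A\in W^{1,\infty}(\Omega)^{N^{2}}$ gives $\partial_{x_{k}}a_{lm}\in L^{\infty}(\Omega)$ for all $k,l,m$, each of $\text{div}_{X_{1}}(A_{11}\nabla_{X_{1}}f)$, $\text{div}_{X_{1}}(A_{12}\nabla_{X_{2}}f)$, $\text{div}_{X_{2}}(A_{21}\nabla_{X_{1}}f)$ expands as a finite sum of products of an $L^{\infty}(\Omega)$ coefficient with a function of the form (a derivative of $\varphi$ of order at most $2$)$\,\otimes\,$(a derivative of $\psi_{s}$ of order at most $1$), hence lies in $L^{2}(\Omega)$ --- this is exactly where one uses $\varphi\in H^{2}(\omega_{1})$ rather than merely $\varphi\in H_{0}^{1}(\omega_{1})$. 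Substituting the $s,\mu$-uniform bounds on $\|\psi_{s}\|_{L^{2}(\omega_{2})}$, $\|\nabla_{X_{2}}\psi_{s}\|_{L^{2}(\omega_{2})}$, $\|\mathcal{A}_{\omega_{2}}\psi_{s}\|_{L^{2}(\omega_{2})}$ together with the fixed quantities $\|\varphi\|_{L^{2}(\omega_{1})}$, $\|\nabla_{X_{1}}\varphi\|_{L^{2}(\omega_{1})}$, $\|\nabla_{X_{1}}^{2}\varphi\|_{L^{2}(\omega_{1})}$ into the explicit formula for $C_{f,A,\Omega}$ in Lemma~\ref{lem-yosida-approx} (whose ingredient $C_{A,\Omega}$ depends only on $A$ and $\Omega$) yields $C_{f,A,\Omega}\leq C_{g,A,\Omega}$, with $C_{g,A,\Omega}$ depending only on $g$, $A$ and $\Omega$; plugging this into the Duhamel inequality gives the claim. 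The main obstacle is precisely this last step: a priori $C_{f,A,\Omega}$ depends on $s$ and $\mu$ through $\psi_{s}$, and its uniform control rests on the tensorisation $e^{s\mathcal{A}_{0,\mu}}(\varphi\otimes\psi)=\varphi\otimes\psi_{s}$ (available only thanks to~(\ref{hypAd2})) and on the dissipativity bounds $\|\psi_{s}\|\leq\|\psi\|$, $\|\mathcal{A}_{\omega_{2}}\psi_{s}\|\leq\|\mathcal{A}_{\omega_{2}}\psi\|$.
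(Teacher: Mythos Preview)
Your proof is correct and follows essentially the same route as the paper: the Duhamel identity for bounded generators, the contraction bound $\|e^{(t-s)\mathcal{A}_{\epsilon,\mu}}\|\leq 1$, and then Lemma~\ref{lem-yosida-approx} applied to $f=e^{s\mathcal{A}_{0,\mu}}g$, with the $s,\mu$-uniform control of $C_{f,A,\Omega}$ obtained via the tensorisation $e^{s\mathcal{A}_{0,\mu}}(\varphi\otimes\psi)=\varphi\otimes e^{s\mathcal{A}_{\omega_2,\mu}}\psi$ permitted by~(\ref{hypAd2}). The paper packages the tensorisation facts and the resulting uniform bounds on $f_g$ into separate appendix lemmas and states the bounds directly for general $g$ in the tensor product, whereas you reduce to pure tensors and extend by linearity; the content is the same.
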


\begin{proof}
Let $\mu >0$ and $t\geq 0$ and $\epsilon \in (0,1]$ , we have 
\begin{eqnarray*}
e^{t\mathcal{A}_{0,\mu }}-e^{t\mathcal{A}_{\epsilon ,\mu }} &=&\int_{0}^{t}%
\frac{d}{ds}\left( e^{(t-s)\mathcal{A}_{\epsilon ,\mu }}e^{s\mathcal{A}%
_{0,\mu }}\right) ds \\
&=&\int_{0}^{t}e^{(t-s)\mathcal{A}_{\epsilon ,\mu }}(\mathcal{A}_{0,\mu }-%
\mathcal{A}_{\epsilon ,\mu })e^{s\mathcal{A}_{0,\mu }}ds.
\end{eqnarray*}
Hence, for $g\in L^{2}(\Omega )$ we have 
\begin{equation}
\left\Vert e^{t\mathcal{A}_{\epsilon ,\mu }}g-e^{t\mathcal{A}_{0,\mu
}}g\right\Vert _{L^{2}(\Omega )}\leq \int_{0}^{t}\left\Vert \mathcal{A}%
_{0,\mu }e^{s\mathcal{A}_{0,\mu }}g-\mathcal{A}_{\epsilon ,\mu }e^{s\mathcal{%
A}_{0,\mu }}g\right\Vert _{L^{2}(\Omega )}ds,  \label{est-yosida}
\end{equation}%
where have used $\left\Vert e^{(t-s)\mathcal{A}_{\epsilon ,\mu
}}\right\Vert \leq 1,$ since $t-s\geq 0.$

Now, we suppose that $g\in \left( H_{0}^{1}\cap H^{2}(\omega _{1})\right)
\otimes (H_{0}^{1}\cap H^{2}(\omega _{2}))$ (remark that $g\in D(\mathcal{A}%
_{0})$). For $s\geq0$ and $\mu>0$ we set: 
\begin{equation*}
f_{g,s,\mu}:=e^{s\mathcal{A}_{0,\mu }}g
\end{equation*}%
We can prove that $f_{g,s,\mu}$ fulfills the same hypothesis satisfied by the
function $f$ of Lemma \ref{lem-yosida-approx}. Moreover, for every $%
i,j=1,...,q$ we have $D_{x_{i}x_{j}}^{2}f_{g,s,\mu}\in L^{2}(\Omega)$ with:    
\begin{equation}
\left\Vert D_{x_{i}x_{j}}^{2}f_{g,s,\mu}\right\Vert _{L^{2}(\Omega )}\leq
\left\Vert D_{x_{i}x_{j}}^{2}g\right\Vert _{L^{2}(\Omega )}\text{ , }%
\left\Vert D_{x_{i}}f_{g,s,\mu}\right\Vert _{L^{2}(\Omega )}\leq \left\Vert
D_{x_{i}}g\right\Vert _{L^{2}(\Omega )}, \label{control1}
\end{equation}%
and
\begin{equation}
\left\Vert\left( \mathcal{A}_{0}f_{g,s,\mu}\right) \right\Vert
_{L^{2}(\Omega )}\leq \left\Vert \mathcal{A}_{0}g\right\Vert
_{L^{2}(\Omega )} \text{, }\left\Vert D_{x_{i}}(\mathcal{A}_{0}f_{g,s,\mu})\right\Vert _{L^{2}(\Omega )}\leq
\left\Vert D_{x_{i}}(\mathcal{A}_{0}g)\right\Vert _{L^{2}(\Omega )},
\label{control2}
\end{equation}%
also for every $i=1,...,q$, $j=q+1,...,N$ we have $D_{x_{i}x_{j}}^{2}f_{g,s,\mu}\in L^{2}(\Omega ) $ with :
\begin{equation}
\left\Vert D_{x_{j}}f_{g,s,\mu}\right\Vert _{L^{2}(\Omega )}^{2}\leq \frac{1}{%
\lambda}\left\Vert \mathcal{A}_{0}g\right\Vert _{L^{2}(\Omega
)}\left\Vert g\right\Vert _{L^{2}(\Omega )}\ \text{and} \ \left\Vert
D_{x_{i}x_{j}}^{2}f_{g,s,\mu}\right\Vert^{2} _{L^{2}(\Omega )}\leq \frac{1}{\lambda}%
\left\Vert D_{x_{i}}\mathcal{A}_{0}g\right\Vert _{L^{2}(\Omega )}\left\Vert
D_{x_{i}}g\right\Vert _{L^{2}(\Omega )}. \label{control3}
\end{equation}
The proof of these assertions follows from the identity $e^{s\mathcal{A}%
_{0,\mu }}(g_{1}\otimes g_{2})=g_{1}\otimes e^{s\mathcal{A}_{0,\mu }}g_{2}$
(see Appendix \ref{AppendixB}). Notice that the above bounds are independent of $s$, $\epsilon $, and $\mu$. \\
Now, apply Lemma \ref{lem-yosida-approx}, we get 
\begin{eqnarray*}
\left\Vert \mathcal{A}_{0,\mu }e^{s\mathcal{A}_{0,\mu }}g-\mathcal{A}%
_{\epsilon ,\mu }e^{s\mathcal{A}_{0,\mu }}g\right\Vert _{L^{2}(\Omega )}
&\leq &\epsilon \left( 
\begin{array}{c}
\left\Vert \text{div}_{X_{1}}(A_{11}\nabla
_{X_{1}}f_{g,s,\mu})\right\Vert _{L^{2}(\Omega )}+\left\Vert \text{div}%
_{X_{1}}(A_{12}\nabla _{X_{2}}f_{g,s,\mu})\right\Vert _{L^{2}(\Omega )} \\ 
+\left\Vert \text{div}_{X_{2}}(A_{21}\nabla _{X_{1}}f_{g,s,\mu})\right\Vert
_{L^{2}(\Omega )}+ \\ C_{A,\Omega }\left( \left\Vert \nabla _{X_{1}}\mathcal{A}%
_{0}f_{g,s,\mu}\right\Vert _{L^{2}(\Omega )}+\left\Vert \mathcal{A}%
_{0}f_{g,s,\mu}\right\Vert _{L^{2}(\Omega )}\right)
\end{array}%
\right).
\end{eqnarray*}
By using $(\ref{control1}-\ref{control3})$ with (\ref{A-liptschitz}), one can show that the quantity in parentheses in the above inequality is bounded by some $C_{g,A,\Omega}>0$ independent of $s$, $\epsilon $, and $\mu$, thus 
$$ \left\Vert \mathcal{A}_{0,\mu }e^{s\mathcal{A}_{0,\mu }}g-\mathcal{A}%
_{\epsilon ,\mu }e^{s\mathcal{A}_{0,\mu }}g\right\Vert _{L^{2}(\Omega )}\leq C_{g,A,\Omega} \times \epsilon.$$
Finally, integrate the above inequality in $s$ over $[0,t]$, and use (%
\ref{est-yosida}), we get the desired result.
\end{proof}

\bigskip Now, we are able to prove Theorem \ref{thrm-semigroups}. First we
prove the case when $g\in \left( H_{0}^{1}\cap H^{2}(\omega _{1})\right)
\otimes (H_{0}^{1}\cap H^{2}(\omega _{2}))$ and we conclude by a density
argument. So, let $g$ as mentioned above, by Lemma \ref{lem-yosida-error} we
have 
\begin{equation}
\forall \mu >0,\forall t\geq 0,\forall \epsilon \in (0,1]:\left\Vert e^{t%
\mathcal{A}_{\epsilon ,\mu }}g-e^{t\mathcal{A}_{0,\mu }}g\right\Vert
_{L^{2}(\Omega )}\leq C_{g,A,\Omega}\times t\times \epsilon.   \label{yosida-error}
\end{equation}%
Passing to the limit in (\ref{yosida-error}) as $\mu
\rightarrow +\infty $ we get (see the preliminaries, the abstract part) 
\begin{equation*}
\forall t\geq 0,\forall \epsilon \in (0,1]:\left\Vert S_{\epsilon
}(t)g-S_{0}(t)g\right\Vert _{L^{2}(\Omega )}\leq C_{g,A,\Omega}\times t\times
\epsilon ,
\end{equation*}%
whence for $T\geq 0$ fixed we obtain 
\begin{equation}
\forall \epsilon \in (0,1]:\sup_{t\in \lbrack 0,T]}\left\Vert S_{\epsilon
}(t)g-S_{0}(t)g\right\Vert _{L^{2}(\Omega )}\leq C_{g,A,\Omega}\times T\times
\epsilon .  \label{taux-convergence}
\end{equation}%
Whence 
\begin{equation}
\sup_{t\in \lbrack 0,T]}\left\Vert S_{\epsilon }(t)g-S_{0}(t)g\right\Vert
_{L^{2}(\Omega )}\rightarrow 0\text{ as }\epsilon \rightarrow 0.
\label{yosida limit}
\end{equation}

Now, let $g\in L^{2}(\Omega )$ and let $\delta >0$, by density there exists $%
g_{\delta }\in \left( H_{0}^{1}\cap H^{2}(\omega _{1})\right) \otimes
(H_{0}^{1}\cap H^{2}(\omega _{2})$ such that 
\begin{equation*}
\left\Vert g-g_{\delta }\right\Vert _{L^{2}(\Omega )}\leq \frac{\delta }{4}.
\end{equation*}%
According to (\ref{yosida limit}) there exists $\epsilon _{\delta }>0$ such
that%
\begin{equation*}
\forall \epsilon \in (0,\epsilon _{\delta }]:\sup_{t\in \lbrack
0,T]}\left\Vert S_{\epsilon }(t)g_{\delta }-S_{0}(t)g_{\delta }\right\Vert
_{L^{2}(\Omega )}\leq \frac{\delta }{2}.
\end{equation*}%
Whence, by the triangle inequality we get 
\begin{equation*}
\forall \epsilon \in (0,\epsilon _{\delta }]:\sup_{t\in \lbrack
0,T]}\left\Vert S_{\epsilon }(t)g-S_{0}(t)g\right\Vert _{L^{2}(\Omega )}\leq 
\frac{\delta }{2}+\sup_{t\in \lbrack 0,T]}\left( \left\Vert S_{\epsilon
}(t)\right\Vert +\left\Vert S_{0}(t)\right\Vert \right) \times \left\Vert
g_{\delta }-g\right\Vert _{L^{2}(\Omega )}.
\end{equation*}%
Using the fact that the semigroups $\left( S_{\epsilon }(t)\right) _{t\geq 0}
$ and $\left( S_{0}(t)\right) _{t\geq 0}$ are of contractions, we get 
\begin{equation*}
\forall \epsilon \in (0,\epsilon _{\delta }]:\sup_{t\in \lbrack
0,T]}\left\Vert S_{\epsilon }(t)g-S_{0}(t)g\right\Vert _{L^{2}(\Omega )}\leq
\delta.
\end{equation*}%
So, $\sup_{t\in \lbrack 0,T]}\left\Vert S_{\epsilon
}(t)g-S_{0}(t)g\right\Vert _{L^{2}(\Omega )}\rightarrow 0$ as $\epsilon
\rightarrow 0.$ The second assertion of the theorem is given by (\ref%
{taux-convergence}) and the proof of Theorem \ref{thrm-semigroups} is completed.
\subsection{An application to linear parabolic equations}
Theorem \ref{thrm-semigroups} gives an opening for the study of anisotropic singular perturbations of evolution partial differential equations from the semigroup point of view. 
In this subsection, we give a simple and short application to the linear parabolic equation%
\begin{equation}
\frac{\partial u_{\epsilon }}{\partial t}-\text{div}(A_{\epsilon }\nabla
u_{\epsilon })=0,  \label{parabolic}
\end{equation}
supplemented with the boundary and the initial conditions  
\begin{eqnarray}
u_{\epsilon }(t,\cdot ) &=&0~\text{in}~\partial \Omega \text{  for }t\in
(0,+\infty )  \label{boundary-prabolic} \\
u_{\epsilon }(0,\cdot ) &=&u_{\epsilon ,0}.  \label{condition-initial}
\end{eqnarray}
The limit problem is 
\begin{equation}
\frac{\partial u}{\partial t}-\text{div}_{X_{2}}(A_{22}\nabla_{X_{2}} u)=0,
\label{parabolic-limit}
\end{equation}
supplemented with the boundary and the initial conditions 
\begin{eqnarray}
u(t,\cdot ) &=&0~\text{in}~\omega _{1}\times \partial \omega _{2}\text{ for }%
t\in (0,+\infty )  \label{bouidary-parabolic-limit} \\
u(0,\cdot ) &=&u_{0}.  \label{condition-initial-limit}
\end{eqnarray}
The operator forms of $(\ref{parabolic}-\ref{condition-initial})$ and $(\ref%
{parabolic-limit}-\ref{condition-initial-limit})$ read%
\begin{equation}
\frac{du_{\epsilon }}{dt}-\mathcal{A}_{\epsilon }u_{\epsilon }=0\text{, with 
}u_{\epsilon }(0)=u_{\epsilon ,0},  \label{parabolic-abstract}
\end{equation}%
and 
\begin{equation}
\frac{du}{dt}-\mathcal{A}_{0}u=0,\text{ with }u(0)=u_{0}.
\label{parabolic-abstract-limit}
\end{equation}
Suppose that $u_{0}\in D(\mathcal{A}_{0})$ and $u_{\epsilon ,0}\in D(%
\mathcal{A}_{\epsilon })$. Assume that ($\ref{hypA1}$), (\ref{hypA2}) hold, 
then it follows that (\ref{parabolic-abstract}), (\ref%
{parabolic-abstract-limit}) have unique classical solutions 

$$u_{\epsilon }\in C^{1}([0,+\infty );L^{2}(\Omega ))\cap C([0,+\infty );D(%
\mathcal{A}_{\epsilon })) \text{, and } u\in C^{1}([0,+\infty );L^{2}(\Omega ))\cap C([0,+\infty );D(\mathcal{A}%
_{0}))$$.

We have the following convergence result.

\begin{prop}\label{prop-parabolic}
Suppose that $u_{0}\in D(\mathcal{A}_{0})$ and $u_{\epsilon ,0}\in D(%
\mathcal{A}_{\epsilon })$ such that $u_{\epsilon ,0}\rightarrow u_{0}$ in $%
L^{2}(\Omega ),$ then under assumptions of Theorem \ref{thrm-semigroups}, we
have for any $T\geq 0$: 
\begin{equation}\label{parabolic-convergence}
\sup_{t\in \lbrack 0,T]}\left\Vert u_{\epsilon }(t)-u(t)\right\Vert
_{L^{2}(\Omega )}\rightarrow 0\text{ as }\epsilon \rightarrow 0.
\end{equation}
Moreover, if $u_{\epsilon ,0}$ and $u_{0}$ are in $H^{2}(\Omega )$ such
that $(u_{\epsilon ,0})$ is bounded in $H^{2}(\Omega )$ and $\Vert\nabla_{X_{2}}(u_{\epsilon ,0}-u_{0})\Vert_{L^2(\Omega)}\rightarrow 0 $,  $\Vert\nabla^{2}_{X_{2}}(u_{\epsilon ,0}-u_{0})\Vert_{L^2(\Omega)}\rightarrow 0 $ as $\epsilon\rightarrow 0$, then: 
\begin{equation*}
\sup_{t\in \lbrack 0,T]}\left\Vert \frac{d}{dt}(u_{\epsilon
}(t)-u(t))\right\Vert _{L^{2}(\Omega )}\rightarrow 0.
\end{equation*}
\end{prop}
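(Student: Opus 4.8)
The plan is to represent the solutions through the two semigroups, $u_{\epsilon}(t)=S_{\epsilon}(t)u_{\epsilon,0}$ and $u(t)=S_{0}(t)u_{0}$, and then to reduce both assertions to Theorem \ref{thrm-semigroups} together with the contractivity of $(S_{\epsilon}(t))_{t\ge 0}$ and $(S_{0}(t))_{t\ge 0}$.

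For the first assertion I would write
\[
u_{\epsilon}(t)-u(t)=S_{\epsilon}(t)(u_{\epsilon,0}-u_{0})+(S_{\epsilon}(t)-S_{0}(t))u_{0}.
\]
Since $(S_{\epsilon}(t))$ is a semigroup of contractions, the first term is bounded in $L^{2}(\Omega)$ by $\Vert u_{\epsilon,0}-u_{0}\Vert_{L^{2}(\Omega)}$, uniformly in $t\ge 0$, hence tends to $0$ by hypothesis. For the second term, the first part of Theorem \ref{thrm-semigroups} applied with $g=u_{0}\in L^{2}(\Omega)$ gives $\sup_{t\in[0,T]}\Vert(S_{\epsilon}(t)-S_{0}(t))u_{0}\Vert_{L^{2}(\Omega)}\to 0$. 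Adding the two contributions yields (\ref{parabolic-convergence}).

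For the second assertion, since $u_{\epsilon,0}\in D(\mathcal{A}_{\epsilon})$ and $u_{0}\in D(\mathcal{A}_{0})$ the classical solutions satisfy $\frac{d}{dt}u_{\epsilon}(t)=\mathcal{A}_{\epsilon}u_{\epsilon}(t)=S_{\epsilon}(t)\mathcal{A}_{\epsilon}u_{\epsilon,0}$ and likewise $\frac{d}{dt}u(t)=S_{0}(t)\mathcal{A}_{0}u_{0}$ (the semigroup commutes with its generator on the domain), so
\[
\frac{d}{dt}(u_{\epsilon}(t)-u(t))=S_{\epsilon}(t)(\mathcal{A}_{\epsilon}u_{\epsilon,0}-\mathcal{A}_{0}u_{0})+(S_{\epsilon}(t)-S_{0}(t))\mathcal{A}_{0}u_{0}.
\]
By Theorem \ref{thrm-semigroups} applied with $g=\mathcal{A}_{0}u_{0}$ (which lies in $L^{2}(\Omega)$ because $u_{0}\in D(\mathcal{A}_{0})$), the second term tends to $0$ uniformly on $[0,T]$. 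For the first term, contractivity of $(S_{\epsilon}(t))$ reduces matters to showing $\Vert\mathcal{A}_{\epsilon}u_{\epsilon,0}-\mathcal{A}_{0}u_{0}\Vert_{L^{2}(\Omega)}\to 0$, and I would split this as $(\mathcal{A}_{\epsilon}-\mathcal{A}_{0})u_{\epsilon,0}+\mathcal{A}_{0}(u_{\epsilon,0}-u_{0})$. Using the algebraic identity recorded in the proof of Lemma \ref{lem-yosida-approx},
\[
(\mathcal{A}_{\epsilon}-\mathcal{A}_{0})u_{\epsilon,0}=\epsilon^{2}\,\text{div}_{X_{1}}(A_{11}\nabla_{X_{1}}u_{\epsilon,0})+\epsilon\,\text{div}_{X_{1}}(A_{12}\nabla_{X_{2}}u_{\epsilon,0})+\epsilon\,\text{div}_{X_{2}}(A_{21}\nabla_{X_{1}}u_{\epsilon,0}),
\]
and since $A\in W^{1,\infty}(\Omega)^{N^{2}}$ (assumption (\ref{A-liptschitz})) each term on the right is bounded in $L^{2}(\Omega)$ by a constant depending only on $A$ times $\Vert u_{\epsilon,0}\Vert_{H^{2}(\Omega)}$ times $\epsilon$ (resp.\ $\epsilon^{2}$); as $(u_{\epsilon,0})$ is bounded in $H^{2}(\Omega)$, this contribution is $O(\epsilon)$. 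Finally, using (\ref{hypAd2}),
\[
\mathcal{A}_{0}(u_{\epsilon,0}-u_{0})=\text{div}_{X_{2}}(A_{22}(X_{2})\nabla_{X_{2}}(u_{\epsilon,0}-u_{0})),
\]
so expanding the divergence and using $A_{22}\in W^{1,\infty}$ gives
\[
\Vert\mathcal{A}_{0}(u_{\epsilon,0}-u_{0})\Vert_{L^{2}(\Omega)}\le C\Vert A_{22}\Vert_{L^{\infty}(\Omega)}\Vert\nabla^{2}_{X_{2}}(u_{\epsilon,0}-u_{0})\Vert_{L^{2}(\Omega)}+C\Vert\nabla_{X_{2}}A_{22}\Vert_{L^{\infty}(\Omega)}\Vert\nabla_{X_{2}}(u_{\epsilon,0}-u_{0})\Vert_{L^{2}(\Omega)},
\]
which tends to $0$ by the assumed convergences. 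Combining the estimates finishes the proof.

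The argument is essentially a matter of choosing the right decompositions, so I do not expect a serious obstacle; the only genuine point is to split $\mathcal{A}_{\epsilon}u_{\epsilon,0}-\mathcal{A}_{0}u_{0}$ so that one piece is of order $\epsilon$ thanks to the Lipschitz regularity of $A$ and the $H^{2}$-boundedness of $(u_{\epsilon,0})$, another piece is killed by the assumed $L^{2}$-convergences of $\nabla_{X_{2}}(u_{\epsilon,0}-u_{0})$ and $\nabla^{2}_{X_{2}}(u_{\epsilon,0}-u_{0})$, and the remaining piece is absorbed into Theorem \ref{thrm-semigroups}; one should also verify at the outset that the classical solutions are given by $S_{\epsilon}(t)u_{\epsilon,0}$, $S_{0}(t)u_{0}$ and that $\mathcal{A}_{0}u_{0}\in L^{2}(\Omega)$ so that Theorem \ref{thrm-semigroups} is applicable to it.
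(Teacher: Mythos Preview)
Your proof is correct and follows essentially the same approach as the paper: the same semigroup representation $u_{\epsilon}(t)=S_{\epsilon}(t)u_{\epsilon,0}$, the same splitting $S_{\epsilon}(t)(u_{\epsilon,0}-u_{0})+(S_{\epsilon}(t)-S_{0}(t))u_{0}$ for the first assertion, and the same reduction of the second assertion to $\Vert\mathcal{A}_{\epsilon}u_{\epsilon,0}-\mathcal{A}_{0}u_{0}\Vert_{L^{2}}\to 0$ plus Theorem~\ref{thrm-semigroups} applied to $g=\mathcal{A}_{0}u_{0}$. In fact you are more explicit than the paper on the last point, giving the split $(\mathcal{A}_{\epsilon}-\mathcal{A}_{0})u_{\epsilon,0}+\mathcal{A}_{0}(u_{\epsilon,0}-u_{0})$ and the bounds for each piece, whereas the paper simply asserts this convergence from (\ref{A-liptschitz}) and the hypotheses on $(u_{\epsilon,0})$.
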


\begin{proof}
It is well known that the solutions $u_{\epsilon }$, $u$ are given by 
\begin{equation*}
u_{\epsilon }(t)=S_{\epsilon }(t)u_{\epsilon ,0}\text{ and }u_{0
}(t)=S_{0}(t)u_{0}\text{, for every }t\geq 0.\text{ }
\end{equation*}%
Let $T\geq 0$, we have 
\begin{eqnarray*}
\sup_{t\in \lbrack 0,T]}\left\Vert u_{\epsilon }(t)-u(t)\right\Vert
_{L^{2}(\Omega )} &\leq &\sup_{t\in \lbrack 0,T]}\left\Vert S_{\epsilon
}(t)u_{\epsilon ,0}-S_{\epsilon }(t)u_{0}\right\Vert _{L^{2}(\Omega
)}+\sup_{t\in \lbrack 0,T]}\left\Vert S_{\epsilon
}(t)u_{0}-S_{0}(t)u_{0}\right\Vert _{L^{2}(\Omega )} \\
&\leq &\left\Vert u_{\epsilon ,0}-u_{0}\right\Vert _{L^{2}(\Omega
)}+\sup_{t\in \lbrack 0,T]}\left\Vert S_{\epsilon
}(t)u_{0}-S_{0}(t)u_{0}\right\Vert _{L^{2}(\Omega )}.
\end{eqnarray*}%
Passing to the limit as $\epsilon \rightarrow 0$ by using Theorem $\ref%
{thrm-semigroups}$, we get $\sup_{t\in \lbrack 0,T]}\left\Vert u_{\epsilon
}(t)-u(t)\right\Vert _{L^{2}(\Omega )}\rightarrow 0.$

For the second affirmation, we have: 
\begin{eqnarray*}
\left\Vert \frac{d}{dt}(u_{\epsilon }(t)-u(t))\right\Vert _{L^{2}(\Omega )}
&=&\left\Vert S_{\epsilon }(t)\mathcal{A}_{\epsilon }u_{\epsilon ,0}-S_{0}(t)%
\mathcal{A}_{0}u_{0}\right\Vert _{L^{2}(\Omega )} \\
&\leq &\left\Vert \mathcal{A}_{\epsilon }u_{\epsilon ,0}-\mathcal{A}%
_{0}u_{0}\right\Vert _{L^{2}(\Omega )}+\sup_{t\in \lbrack 0,T]}\left\Vert
S_{\epsilon }(t)\mathcal{A}_{0}u_{0}-S_{0}(t)\mathcal{A}_{0}u_{0}\right\Vert
_{L^{2}(\Omega )}.
\end{eqnarray*}

As $(u_{\epsilon ,0})$ is bounded in $H^{2}(\Omega )$, $u_{0}\in
H^{2}(\Omega )$ and $\Vert\nabla_{X_{2}}(u_{\epsilon ,0}-u_{0})\Vert_{L^2(\Omega)}\rightarrow 0 $,  $\Vert\nabla^{2}_{X_{2}}(u_{\epsilon ,0}-u_{0})\Vert_{L^2(\Omega)}\rightarrow 0 $ as $\epsilon\rightarrow 0$, then by using (\ref{A-liptschitz}) we get immediately  $\left\Vert \mathcal{A}%
_{\epsilon }u_{\epsilon ,0}-\mathcal{A}_{0}u_{0}\right\Vert _{L^{2}(\Omega
)}\rightarrow 0$ as $\epsilon \rightarrow 0$, and we conclude by applying
Theorem $\ref{thrm-semigroups}$. 
\end{proof}
\begin{rmk}
Consider the nonhomogeneous parabolic equations associated to (\ref{parabolic}) and (\ref{parabolic-limit}) with second member $f(t,x)$. Suppose that $f$ is regular enough, for example $f\in Lip([0,T];L^{2}(\Omega ))$, then the associated classical solutions $u_{\epsilon }$ and $u$ exist and they are unique. In this case, we have the same convergence result (\ref{parabolic-convergence}). The proof follows immediately from the use of the following integral representation formulas
$$u_{\epsilon }(t)=S_{\epsilon }(t)u_{\epsilon,0}+\int_{0}^{t}S_{\epsilon
}(t-r)f(r)dr \text{,    } u(t)=S_{0}(t)u_{0}+\int_{0}^{t}S_{0}(t-r)f(r)dr, \text{  } t\in[0,T],
$$%
Theorem \ref{thrm-semigroups}, and Lebesgue's theorem.
\end{rmk}
\section*{Acknowledgment}
The authors would like to thank Professor Robert Eymard for some useful discussions.

\bigskip \appendix

\section{Density lemmas}

\label{appendixA} Let $\omega _{1}$ and $\omega _{2}$ be two open bounded
subsets of $%
\mathbb{R}
^{q}$ and $%
\mathbb{R}
^{N-q}$ respectively. Recall that

\begin{equation*}
H_{0}^{1}(\Omega ;\omega _{2})=\left\{ u\in L^{2}(\Omega )\mid \nabla
_{X_{2}}u\in L^{2}(\Omega )^{N-q},\text{ and for a.e.}X_{1}\in \omega _{1},u(X_{1},\cdot
)\in H_{0}^{1}(\omega _{2})\right\} ,
\end{equation*}%
normed by $\left\Vert \nabla _{X_{2}}(\cdot )\right\Vert _{L^{2}(\Omega )}.$
We have the following

\begin{lem}
\label{lem:density}The space $H_{0}^{1}(\Omega )$ is dense in $%
H_{0}^{1}(\Omega ;\omega _{2}).$
\end{lem}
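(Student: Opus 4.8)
\emph{Approach.} The plan is to realise $H_0^1(\Omega;\omega_2)$ as the Bochner space $L^2(\omega_1;H_0^1(\omega_2))$ and to show that even the subspace $C_c^\infty(\Omega)\subset H_0^1(\Omega)$ is dense in it. First I would check the isometric identification: to $u\in H_0^1(\Omega;\omega_2)$ associate the map $X_1\mapsto u(X_1,\cdot)$. By Fubini this is, for a.e.\ $X_1$, an element of $H_0^1(\omega_2)$ (the section condition is built into the definition of the space), and for every $\psi\in H_0^1(\omega_2)$ the map $X_1\mapsto\langle u(X_1,\cdot),\psi\rangle=\int_{\omega_2}\nabla_{X_2}u(X_1,\cdot)\cdot\nabla_{X_2}\psi\,dX_2$ is measurable; since $H_0^1(\omega_2)$ is separable, the Pettis measurability theorem yields strong measurability, and $\int_{\omega_1}\|u(X_1,\cdot)\|_{H_0^1(\omega_2)}^2\,dX_1=\|\nabla_{X_2}u\|_{L^2(\Omega)^{N-q}}^2$, where $H_0^1(\omega_2)$ carries the norm $\|\nabla_{X_2}\cdot\|_{L^2(\omega_2)^{N-q}}$ (equivalent to the full $H^1$ norm by Poincar\'e). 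Conversely every element of $L^2(\omega_1;H_0^1(\omega_2))$ arises this way, so the identification is an isometric isomorphism.

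\emph{Density argument.} This proceeds in three elementary steps. (i) Step functions $\sum_{i=1}^m\mathbf{1}_{E_i}(X_1)\,w_i$, with $E_i\subset\omega_1$ measurable of finite measure and $w_i\in H_0^1(\omega_2)$, are dense in $L^2(\omega_1;H_0^1(\omega_2))$; this is immediate from the construction of a Bochner $L^2$-space. (ii) Each indicator $\mathbf{1}_{E_i}$ can be approximated in $L^2(\omega_1)$ by some $\varphi_i\in C_c^\infty(\omega_1)$, and since the sum is finite with the $w_i$ fixed, $\sum_i\varphi_i\otimes w_i$ is then close to $\sum_i\mathbf{1}_{E_i}\otimes w_i$ in $L^2(\omega_1;H_0^1(\omega_2))$. (iii) Each $w_i\in H_0^1(\omega_2)$ can be approximated in $H_0^1(\omega_2)$ by some $\psi_i\in C_c^\infty(\omega_2)$, by the very definition of $H_0^1(\omega_2)$; again the finiteness of the sum together with $\|\sum_i\varphi_i\otimes(w_i-\psi_i)\|_{L^2(\omega_1;H_0^1(\omega_2))}\le\sum_i\|\varphi_i\|_{L^2(\omega_1)}\|w_i-\psi_i\|_{H_0^1(\omega_2)}$ keeps the error small. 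The resulting function $(X_1,X_2)\mapsto\sum_{i=1}^m\varphi_i(X_1)\psi_i(X_2)$ is smooth with compact support contained in $\Omega$, hence lies in $C_c^\infty(\Omega)\subset H_0^1(\Omega)$, and is arbitrarily close to the given $u$ in $H_0^1(\Omega;\omega_2)$.

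\emph{Conclusion and main difficulty.} Combining the two parts, and recalling the continuous embedding $H_0^1(\Omega)\hookrightarrow H_0^1(\Omega;\omega_2)$ noted in the introduction, $H_0^1(\Omega)$ is dense in $H_0^1(\Omega;\omega_2)$. The only genuinely nontrivial point is the isometric identification with the Bochner space — concretely, the strong measurability of $X_1\mapsto u(X_1,\cdot)$ — and even this is routine once one invokes separability of $H_0^1(\omega_2)$; everything else is a finite-sum perturbation. (An alternative route, avoiding Bochner spaces, is to truncate $u$ by a cutoff $\chi_k(X_1)\in C_c^\infty(\omega_1)$ and mollify in the $X_1$ variable; but this still reduces to the same task of approximating the $X_2$-sections by $C_c^\infty(\omega_2)$ functions, so it is not really shorter.)
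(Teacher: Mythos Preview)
Your argument is correct but follows a genuinely different route from the paper. You identify $H_0^1(\Omega;\omega_2)$ isometrically with the Bochner space $L^2(\omega_1;H_0^1(\omega_2))$ and then invoke the standard density of simple functions, approximating indicators in $L^2(\omega_1)$ by $C_c^\infty(\omega_1)$ and sections in $H_0^1(\omega_2)$ by $C_c^\infty(\omega_2)$; this is clean, elementary, and in fact yields the stronger statement that $C_c^\infty(\omega_1)\otimes C_c^\infty(\omega_2)$ is already dense. The paper instead constructs the approximants $u_n\in H_0^1(\Omega)$ variationally, as Lax--Milgram solutions of the penalised problem $\frac{1}{n^2}\int_\Omega\nabla_{X_1}u_n\cdot\nabla_{X_1}\varphi+\int_\Omega\nabla_{X_2}u_n\cdot\nabla_{X_2}\varphi=\int_\Omega\nabla_{X_2}u\cdot\nabla_{X_2}\varphi$, then uses uniform a~priori bounds, weak compactness, and Mazur's lemma to upgrade to strong convergence and identify the limit. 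Your approach is shorter and avoids PDE machinery altogether; the paper's approach is thematically consistent with the rest of the article (the approximating sequence is itself the solution of an anisotropic singular perturbation problem with $A=\mathrm{Id}$ and $\epsilon=1/n$), so the density lemma becomes an instance of the very phenomenon under study.
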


\begin{proof}
Let $u\in H_{0}^{1}(\Omega ;\omega _{2})$ fixed. Let $l$ be the linear form
defined on $H_{0}^{1}(\Omega )$ by%
\begin{equation*}
\forall \varphi \in H_{0}^{1}(\Omega ):l(\varphi )=\int_{\Omega }\nabla
_{X_{2}}u\cdot \nabla _{X_{2}}\varphi dx.
\end{equation*}%
$l$ is continuous on $H_{0}^{1}(\Omega )$, indeed we have 
\begin{equation*}
\forall \varphi \in H_{0}^{1}(\Omega ):\left\vert l(\varphi )\right\vert
\leq \left\Vert \nabla _{X_{2}}u\right\Vert _{L^{2}(\Omega )}\left\Vert
\nabla _{X_{2}}\varphi \right\Vert _{L^{2}(\Omega )},
\end{equation*}%
and then, 
\begin{equation*}
\forall \varphi \in H_{0}^{1}(\Omega ):\left\vert l(\varphi )\right\vert
\leq \left\Vert \nabla _{X_{2}}u\right\Vert _{L^{2}(\Omega )}\left\Vert
\nabla \varphi \right\Vert _{L^{2}(\Omega )}.
\end{equation*}%
For every $n\in 
\mathbb{N}
^{\ast },$ we denote $u_{n}$ the unique solution to 
\begin{equation}
\left\{ 
\begin{array}{c}
\frac{1}{n^{2}}\int_{\Omega }\nabla _{X_{1}}u_{n}\cdot \nabla
_{X_{1}}\varphi dx+\int_{\Omega }\nabla _{X_{2}}u_{n}\cdot \nabla
_{X_{2}}\varphi dx=l(\varphi )\text{, }\forall \varphi \in H_{0}^{1}(\Omega )
\\ 
u_{n}\in H_{0}^{1}(\Omega )\text{, \ \ \ \ \ \ \ \ \ \ \ \ \ \ \ \ \ \ \ \ \
\ \ \ \ \ \ \ \ \ \ \ \ \ \ \ \ \  \ \ \ \ \ \ \ \ \ \ 
\ \ \ \ \ \ \ \ \ \ \ \ \ \ \ \ \ \ \ \ \ \ }%
\end{array}%
\right.  \label{suite-Un}
\end{equation}%
where the existence and the uniqueness follow from the Lax-Milgram theorem. Testing with $u_{n}$ in (\ref{suite-Un}) we get, for every $n\in 
\mathbb{N}
^{\ast }$ 
\begin{equation*}
\frac{1}{n^{2}}\int_{\Omega }\left\vert \nabla _{X_{1}}u_{n}\right\vert
^{2}dx+\int_{\Omega }\left\vert \nabla _{X_{2}}u_{n}\right\vert ^{2}dx\leq
\left\Vert \nabla _{X_{2}}u\right\Vert _{L^{2}(\Omega )}\left\Vert \nabla
_{X_{2}}u_{n}\right\Vert _{L^{2}(\Omega )},
\end{equation*}%
then, we deduce that 
\begin{equation}
\forall n\in 
\mathbb{N}
^{\ast }:\left\Vert \nabla _{X_{2}}u_{n}\right\Vert _{L^{2}(\Omega )}\leq
\left\Vert \nabla _{X_{2}}u\right\Vert _{L^{2}(\Omega )},  \label{borne1}
\end{equation}%
and 
\begin{equation}
\forall n\in 
\mathbb{N}
^{\ast }:\frac{1}{n}\left\Vert \nabla _{X_{1}}u_{n}\right\Vert
_{L^{2}(\Omega )}\leq \left\Vert \nabla _{X_{2}}u\right\Vert _{L^{2}(\Omega
)}.  \label{borne2}
\end{equation}%
Using (\ref{borne1}) and Poincar\'{e}'s inequality we obtain:%
\begin{equation}
\forall n\in 
\mathbb{N}
^{\ast }:\left\Vert u_{n}\right\Vert _{L^{2}(\Omega )}\leq C_{\omega
_{2}}\left\Vert \nabla _{X_{2}}u\right\Vert _{L^{2}(\Omega )}.  \label{borne3}
\end{equation}%
Reflexivity of $L^{2}(\Omega )$ shows that there exists, $u_{\infty
},u_{\infty }^{\prime },u_{\infty }^{\prime \prime }\in L^{2}(\Omega )$ and
a subsequence still labeled $(u_{n})$ such that 
\begin{equation*}
u_{n}\rightharpoonup u_{\infty }\text{, }\nabla _{X_{2}}u_{n}\rightharpoonup
u_{\infty }^{\prime }\text{ and }\frac{1}{n}\nabla
_{X_{1}}u_{n}\rightharpoonup u_{\infty }^{\prime \prime }\text{ in }%
L^{2}(\Omega )\text{, weakly. }
\end{equation*}%
Using the continuity of derivation on $\mathcal{D}^{\prime }(\Omega )$  we get%
\begin{equation}
u_{n}\rightharpoonup u_{\infty }\text{, }\nabla _{X_{2}}u_{n}\rightharpoonup
\nabla _{X_{2}}u_{\infty }\text{ and }\frac{1}{n}\nabla
_{X_{1}}u_{n}\rightharpoonup 0\text{ in }L^{2}(\Omega )\text{,  weakly}.
\label{convergence-faible}
\end{equation}

\textbf{\ 1) we have }$u_{\infty }\in H_{0}^{1}(\Omega ;\omega _{2}):$
\ By the Mazur Lemma, there exists a sequence $(U_{n})$ of convex
combinations of $\left\{ u_{n}\right\} $ such that 
\begin{equation}
\nabla _{X_{2}}U_{n}\rightarrow \nabla _{X_{2}}u_{\infty }\text{ in }%
L^{2}(\Omega )\text{ strongly,}  \label{mazur}
\end{equation}%
then by the Lebesgue theorem there exists a subsequence $(U_{n_{k}})$ such
that: 
\begin{equation}
\text{For a.e. }X_{1}\in \omega _{1}:\nabla _{X_{2}}U_{n_{k}}(X_{1},\cdot
)\rightarrow \nabla _{X_{2}}u_{\infty }(X_{1},\cdot )\text{ in }L^{2}(\omega
_{2})\text{ strongly.}  \label{convgrad2}
\end{equation}%
Now, since $(U_{n_{k}})\in H_{0}^{1}(\Omega )^{%
\mathbb{N}
}$ then 
\begin{equation}
\text{For a.e.}X_{1}\in \omega _{1}:(U_{n_{k}}(X_{1},\cdot ))\in
H_{0}^{1}(\omega _{2})^{%
\mathbb{N}
}.  \label{Homega2}
\end{equation}
Combining (\ref{convgrad2}) and (\ref{Homega2}) we deduce: 
\begin{equation*}
\text{For a.e}.\text{ }X_{1}\in \omega _{1},\text{ }u_{\infty }(X_{1},\cdot
)\in H_{0}^{1}(\omega _{2}),
\end{equation*}%
and the proof of $u_{\infty }\in H_{0}^{1}(\Omega ;\omega _{2})$ is finished.

\textbf{2) we have }$u_{\infty }=u:$
Passing to the limit in (\ref{suite-Un}) by using (\ref{convergence-faible})
we obtain%
\begin{equation}
\int_{\Omega }\nabla _{X_{2}}u_{\infty }\cdot \nabla _{X_{2}}\varphi
dx=\int_{\Omega }\nabla _{X_{2}}u\cdot \nabla _{X_{2}}\varphi dx\text{, }%
\forall \varphi \in H_{0}^{1}(\Omega ).  \label{equation-limite}
\end{equation}%
For every $\varphi _{1}\in H_{0}^{1}(\omega _{1})$ and $\varphi _{2}\in
H_{0}^{1}(\omega _{2})$ take $\varphi =\varphi _{1}\otimes \varphi _{2}$
in (\ref{equation-limite}) we obtain, for a.e. $X_{1}\in \omega _{1}$ 
\begin{equation*}
\int_{\omega _{2}}\nabla _{X_{2}}u_{\infty }(X_{1},\cdot )\cdot \nabla
_{X_{2}}\varphi _{2}dX_{2}=\int_{\omega _{2}}\nabla _{X_{2}}u(X_{1},\cdot
)\cdot \nabla _{X_{2}}\varphi _{2}dX_{2}\text{, }\forall \varphi _{2}\in
H_{0}^{1}(\omega _{2}).
\end{equation*}%
For a.e. $X_{1}\in \omega _{1},$ take $\varphi _{2}=u_{\infty }(X_{1},\cdot
)-u(X_{1},\cdot )$ (which belongs to $H_{0}^{1}(\omega _{2})$) in the above
equality, we get:%
\begin{equation*}
\int_{\omega _{2}}\left\vert \nabla _{X_{2}}(u_{\infty }(X_{1},\cdot
)-u(X_{1},\cdot ))\right\vert ^{2}dX_{2}=0.
\end{equation*}%
Integrating over $\omega _{1}$ we deduce 
\begin{equation*}
\int_{\Omega }\left\vert \nabla _{X_{2}}(u_{\infty }-u)\right\vert ^{2}dx=0.
\end{equation*}%
Finally, since $\left\Vert \nabla _{X_{2}}(\cdot )\right\Vert
_{L^{2}(\Omega )}$ is a norm on $H_{0}^{1}(\Omega ;\omega _{2})$ we get, 
\begin{equation}
u_{\infty }=u.  \label{fin}
\end{equation}%
Combining $(\ref{mazur})$ and $(\ref{fin})$ we get the desired result.
\end{proof}

\begin{rmk}
\label{rem:density} By symmetry, $H_{0}^{1}(\Omega )$ is dense in the space 
\begin{equation*}
H_{0}^{1}(\Omega ;\omega _{1})=\left\{ u\in L^{2}(\Omega )\mid \nabla
_{X_{1}}u\in L^{2}(\Omega )\text{, and for a.e. }X_{2}\in \omega
_{2},u(\cdot ,X_{2})\in H_{0}^{1}(\omega _{1})\right\} ,
\end{equation*}%
normed by $\left\Vert \nabla _{X_{1}}(\cdot )\right\Vert _{L^{2}(\Omega )}.$
\end{rmk}

\begin{lem}
\label{lemdensitysob} The space $H_{0}^{1}(\omega _{1})\otimes
H_{0}^{1}(\omega _{2})$ is dense in $H_{0}^{1}(\Omega )$.
\end{lem}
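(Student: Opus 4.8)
The plan is to combine the density of $C_c^\infty(\Omega)$ in $H_0^1(\Omega)$ with a classical $C^1$ polynomial approximation and a tensor-product cutoff. Since $H_0^1(\Omega)$ is by definition the closure of $C_c^\infty(\Omega)$, and since $C_c^\infty(\omega_1)\subset H_0^1(\omega_1)$ and $C_c^\infty(\omega_2)\subset H_0^1(\omega_2)$, it suffices to show that every $\varphi\in C_c^\infty(\Omega)$ is a limit, in the $H_0^1(\Omega)$-norm, of finite sums of products $\phi\otimes\psi$ with $\phi\in C_c^\infty(\omega_1)$ and $\psi\in C_c^\infty(\omega_2)$.

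So I would fix $\varphi\in C_c^\infty(\Omega)$ and let $K=\operatorname{supp}\varphi$, a compact subset of $\omega_1\times\omega_2$. Its projections $\pi_1(K)\subset\omega_1$ and $\pi_2(K)\subset\omega_2$ are compact, so one can pick $\chi_1\in C_c^\infty(\omega_1)$ equal to $1$ on a neighborhood of $\pi_1(K)$ and $\chi_2\in C_c^\infty(\omega_2)$ equal to $1$ on a neighborhood of $\pi_2(K)$; then $\chi:=\chi_1\otimes\chi_2$ equals $1$ on a neighborhood of $K$, so $\chi\varphi=\varphi$. Choosing closed boxes $Q_1\supset\operatorname{supp}\chi_1$ and $Q_2\supset\operatorname{supp}\chi_2$ and setting $Q=Q_1\times Q_2$, the Weierstrass theorem in the $C^1(Q)$-norm (for instance via tensor Bernstein polynomials) provides polynomials $P_n$ in the variables $x=(X_1,X_2)$ with $P_n\to\varphi$ in $C^1(Q)$.

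Writing each $P_n$ as a finite sum of monomials $c_{\alpha\beta}X_1^\alpha X_2^\beta$, one obtains $\chi P_n=\sum c_{\alpha\beta}(\chi_1X_1^\alpha)\otimes(\chi_2X_2^\beta)\in C_c^\infty(\omega_1)\otimes C_c^\infty(\omega_2)\subset H_0^1(\omega_1)\otimes H_0^1(\omega_2)$. Moreover $\chi P_n-\varphi=\chi(P_n-\varphi)$ is supported in the fixed compact set $\operatorname{supp}\chi_1\times\operatorname{supp}\chi_2\subset\Omega$, and the Leibniz rule together with $P_n\to\varphi$ in $C^1(Q)$ gives $\Vert\chi(P_n-\varphi)\Vert_{W^{1,\infty}(\Omega)}\to 0$; since $|\Omega|<\infty$, this forces $\chi P_n\to\varphi$ in $H^1(\Omega)$, hence in $H_0^1(\Omega)$, which completes the argument.

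The step demanding the most care is the polynomial approximation in $C^1$ rather than merely in $C^0$: what matters is the convergence of the gradients, not only of the functions, and that is precisely where the smoothness of $\varphi$ and the product structure of the box $Q$ are used. Once that is granted, the cutoff restoring the homogeneous Dirichlet condition and the passage from $C^1$- to $H^1$-convergence are routine.
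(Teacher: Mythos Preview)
Your argument is correct. The cutoff $\chi=\chi_1\otimes\chi_2$ localizes to a fixed compact set, Weierstrass in $C^1$ on the enclosing box $Q$ gives polynomials $P_n\to\varphi$ together with their gradients, and the Leibniz rule converts this into $\chi P_n\to\varphi$ in $H_0^1(\Omega)$, with $\chi P_n\in C_c^\infty(\omega_1)\otimes C_c^\infty(\omega_2)$ by construction. The only mild point worth checking is that the boxes $Q_i$ need not lie inside $\omega_i$ (they need only contain $\operatorname{supp}\chi_i$), which is fine since $\varphi$ extends by zero to all of $\mathbb{R}^N$ and the polynomials are globally defined; you handle this implicitly but correctly.

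The paper's own proof takes a different and shorter route: it invokes as a known fact that $\mathcal{D}(\omega_1)\otimes\mathcal{D}(\omega_2)$ is dense in $\mathcal{D}(\omega_1\times\omega_2)$ for the inductive-limit topology of test functions, and then simply transports this density to $H_0^1(\Omega)$ via the continuous inclusion $\mathcal{D}(\Omega)\hookrightarrow H_0^1(\Omega)$, using the general ``density rule'' stated in the introduction. Your approach is more hands-on: you avoid the distribution-theory black box entirely and produce the approximating tensor products explicitly via $C^1$ Stone--Weierstrass and a tensor cutoff. The paper's argument is quicker to write but relies on a nontrivial result about the LF-topology; yours is longer but self-contained and only uses undergraduate analysis.
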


\begin{proof}
It is well known that $D(\omega _{1})\otimes D(\omega _{2})$ is dense in $%
D(\omega _{1}\times \omega _{2})$. Here, $D(\omega
_{1}\times \omega _{2})$ is equipped with its natural topology (the
inductive limit topology). It is clear that the injection of $D(\omega
_{1}\times \omega _{2})$ in $H_{0}^{1}(\omega _{1}\times \omega _{2})$ is
continuous, thanks to the inequality 
\begin{equation*}
\forall u\in D(\Omega ):\left( \int_{\Omega }\left\vert \nabla u\right\vert
^{2}dx\right) ^{\frac{1}{2}}\leq \sqrt{N\times mes(\Omega )}\times \left(
\max_{1\leq i\leq N}\sup_{Support(u)}\left\vert \partial
_{x_{i}}u\right\vert \right).
\end{equation*}
Hence, by the density rule we obtain the density of $D(\omega
_{1})\otimes D(\omega _{2})$ in $H_{0}^{1}(\Omega )$, and the lemma follows.
\end{proof}

\begin{lem}
Let $(V_{n}^{(1)})$ and $(V_{n}^{(2)})$ be two sequences of subspaces (not
necessarily of finite dimension) of $H_{0}^{1}(\omega _{1})$ and $%
H_{0}^{1}(\omega _{2})$ respectively. If $\cup V_{n}^{(1)}$ and $\cup
V_{n}^{(2)}$are dense in $H_{0}^{1}(\omega _{1})$ and $H_{0}^{1}(\omega
_{2}) $ respectively, then $vect\left(
\bigcup\limits_{n,m}(V_{n}^{(1)}\otimes V_{m}^{(2)})\right) $ is dense in $%
H_{0}^{1}(\omega _{1})\otimes H_{0}^{1}(\omega _{2})$ for the induced
topology of $H_{0}^{1}(\Omega ).$ In particular, if $(V_{n}^{(1)})$ and $%
(V_{n}^{(2)})$ are nondecreasing then $\bigcup\limits_{n}(V_{n}^{(1)}\otimes
V_{n}^{(2)})$ is dense in $H_{0}^{1}(\omega _{1})\otimes H_{0}^{1}(\omega
_{2}).$
\end{lem}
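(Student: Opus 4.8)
The plan is to reduce the tensor-product density statement to the one-variable densities via the algebraic structure of $H_0^1(\omega_1)\otimes H_0^1(\omega_2)$, and then invoke the density rule quoted in the introduction to transfer density through the continuous embedding into $H_0^1(\Omega)$. The first observation is purely algebraic: since every element of $H_0^1(\omega_1)\otimes H_0^1(\omega_2)$ is a finite sum $\sum_{k=1}^m \varphi_k\otimes\psi_k$ with $\varphi_k\in H_0^1(\omega_1)$, $\psi_k\in H_0^1(\omega_2)$, it suffices to approximate a single elementary tensor $\varphi\otimes\psi$ by elements of $\mathrm{vect}\left(\bigcup_{n,m}(V_n^{(1)}\otimes V_m^{(2)})\right)$ and then take finite sums. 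So fix $\varphi\otimes\psi$ and $\delta>0$.

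\textbf{Key steps, in order.} First I would use the hypothesis that $\bigcup_n V_n^{(1)}$ is dense in $H_0^1(\omega_1)$ to pick $n$ and $a\in V_n^{(1)}$ with $\|\nabla_{X_1}(\varphi-a)\|_{L^2(\omega_1)^q}$ small (equivalently, small in the $H_0^1(\omega_1)$-norm), and likewise pick $m$ and $b\in V_m^{(2)}$ with $\|\varphi-a\|$, $\|b-\psi\|$ controlled in $H_0^1(\omega_2)$. Then $a\otimes b\in V_n^{(1)}\otimes V_m^{(2)}$, and I would estimate the $H_0^1(\Omega)$-distance from $\varphi\otimes\psi$ to $a\otimes b$ by the telescoping identity $\varphi\otimes\psi-a\otimes b=(\varphi-a)\otimes\psi+a\otimes(\psi-b)$ together with the bilinear bounds
\begin{equation*}
\|\nabla_{X_1}((\varphi-a)\otimes\psi)\|_{L^2(\Omega)}\le\|\nabla_{X_1}(\varphi-a)\|_{L^2(\omega_1)}\|\psi\|_{L^2(\omega_2)},
\end{equation*}
\begin{equation*}
\|\nabla_{X_2}((\varphi-a)\otimes\psi)\|_{L^2(\Omega)}\le\|\varphi-a\|_{L^2(\omega_1)}\|\nabla_{X_2}\psi\|_{L^2(\omega_2)},
\end{equation*}
and the symmetric bounds for $a\otimes(\psi-b)$, where one uses that $\|a\|_{L^2(\omega_1)}$ stays bounded (by $\|\varphi\|+\delta$, say). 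This shows $\varphi\otimes\psi$ lies in the closure of $\mathrm{vect}\left(\bigcup_{n,m}(V_n^{(1)}\otimes V_m^{(2)})\right)$ in the $H_0^1(\Omega)$-norm; summing over the $m$ elementary tensors gives density of $H_0^1(\omega_1)\otimes H_0^1(\omega_2)$. Finally, for the ``in particular'' clause, if $(V_n^{(1)})$ and $(V_n^{(2)})$ are nondecreasing then for the finitely many indices produced above one may replace all of them by a single large index $p$ (taking $p\ge n,m$ and using monotonicity, so $V_n^{(1)}\subset V_p^{(1)}$, $V_m^{(2)}\subset V_p^{(2)}$), which collapses $\mathrm{vect}\left(\bigcup_{n,m}(V_n^{(1)}\otimes V_m^{(2)})\right)$ into $\bigcup_p(V_p^{(1)}\otimes V_p^{(2)})$, since a finite sum of elements of $V_p^{(1)}\otimes V_p^{(2)}$ is again in $V_p^{(1)}\otimes V_p^{(2)}$.

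\textbf{The main obstacle} I anticipate is bookkeeping rather than conceptual: one must be careful that the constants multiplying the small quantities (namely $\|\psi\|_{L^2}$, $\|\nabla_{X_2}\psi\|_{L^2}$, and the bounded $\|a\|_{L^2}$) are genuinely fixed — independent of the approximation parameter — so that letting the one-variable errors tend to zero actually forces the $H_0^1(\Omega)$-error to zero; and that when passing to finite sums of $m$ elementary tensors the total error is a fixed multiple of $\delta$. A secondary point requiring a word of care is that $H_0^1(\omega_1)\otimes H_0^1(\omega_2)$ is a genuine subspace of $H_0^1(\Omega)$ (so that the elementary tensors $\varphi\otimes\psi$ indeed lie in $H_0^1(\Omega)$ with the product in $L^2$), which follows from $\varphi\in H_0^1(\omega_1)\subset L^2(\omega_1)$, $\psi\in H_0^1(\omega_2)\subset L^2(\omega_2)$ and Fubini; and that the ``same induced topology'' means we measure everything in $\|\nabla(\cdot)\|_{L^2(\Omega)^N}$ throughout. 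None of these present a real difficulty.
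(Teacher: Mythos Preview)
Your proposal is correct and follows essentially the same route as the paper: reduce to a single elementary tensor, use the telescoping $\varphi\otimes\psi-a\otimes b=(\varphi-a)\otimes\psi+a\otimes(\psi-b)$ together with the bilinear product bounds (which the paper packages into the single inequality $\|u\otimes v\|_{H_0^1(\Omega)}^2\le C\|u\|_{H_0^1(\omega_1)}^2\|v\|_{H_0^1(\omega_2)}^2$ via Poincar\'e), and then handle finite sums by replacing $\eta$ with $\eta/l$. The mention of the ``density rule'' in your plan is extraneous for this lemma---it is only needed for the subsequent corollary passing from $H_0^1(\omega_1)\otimes H_0^1(\omega_2)$ to $H_0^1(\Omega)$---but this does not affect your argument.
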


\begin{proof}
Let us start by a useful inequality. For $u\otimes v$ in $H_{0}^{1}(\omega
_{1})\otimes H_{0}^{1}(\omega _{2})$ we have :%
\begin{eqnarray}
\left\Vert u\otimes v\right\Vert _{H_{0}^{1}(\Omega )}^{2} &=&\int_{\Omega
}\left\vert \nabla _{X_{1}}(u\otimes v)\right\vert ^{2}dx+\int_{\Omega
}\left\vert \nabla _{X_{2}}(u\otimes v)\right\vert ^{2}dx  \notag \\
&=&\left( \int_{\omega _{2}}v^{2}dX_{2}\right) \times \left( \int_{\omega
_{1}}\left\vert \nabla _{X_{1}}u\right\vert ^{2}dX_{1}\right)  \notag \\
&&+\left( \int_{\omega _{1}}u^{2}dX_{1}\right) \times \left( \int_{\omega
_{2}}\left\vert \nabla _{X_{2}}v\right\vert ^{2}dX_{2}\right)  \notag \\
&\leq &C\left\Vert u\right\Vert _{H_{0}^{1}(\omega _{1})}^{2}\times
\left\Vert v\right\Vert _{H_{0}^{1}(\omega _{2})}^{2},  \label{15}
\end{eqnarray}
where we have used Fubini's theorem and Poincar\'{e}'s inequality. Here, $%
C=C_{\omega _{1}}^{2}+C_{\omega _{2}}^{2}>0$.
Now, fix $\eta >0$ and let $\varphi \otimes \psi \in H_{0}^{1}(\omega
_{1})\otimes H_{0}^{1}(\omega _{2})$, by density of $\cup V_{n}^{(1)}$ in $%
H_{0}^{1}(\omega _{1})$ there exists $n\in 
\mathbb{N}
$ and $\varphi _{n}\in V_{n}^{(1)}$ such that:%
\begin{equation*}
\left\Vert \psi \right\Vert _{H_{0}^{1}(\omega _{2})}\times \left\Vert
\varphi _{n}-\varphi \right\Vert _{H_{0}^{1}(\omega _{1})}\leq \frac{\eta }{2%
\sqrt{C}}.
\end{equation*}
Similarly by density of $\cup V_{n}^{(2)}$ in $H_{0}^{1}(\omega _{2})$,
there exits $m\in 
\mathbb{N}
$ \ ( which depends on $n$ and $\psi )$ and $\psi _{m}\in V_{m}^{(2)}$such
that 
\begin{equation*}
\left\Vert \varphi _{n}\right\Vert _{H_{0}^{1}(\omega _{1})}\times
\left\Vert \psi _{m}-\psi \right\Vert _{H_{0}^{1}(\omega _{2})}\leq \frac{%
\eta }{2\sqrt{C}}.
\end{equation*}
Whence, by using the triangle inequality and (\ref{15}) we obtain%
\begin{equation}
\left\Vert \varphi \otimes \psi -\varphi _{n}\otimes \psi _{m}\right\Vert
_{H_{0}^{1}(\Omega )}\leq \eta.  \label{16}
\end{equation}
Now, since every element of $H_{0}^{1}(\omega _{1})\otimes H_{0}^{1}(\omega
_{2})$ could be written as $\sum\limits_{i=1}^{l}\varphi _{i}\otimes \psi
_{i}$, then by using the triangle inequality and using (\ref{16}) with $\eta$ replaced by $\frac{\eta}{l}$, one gets the desired result.
\end{proof}

\begin{cor}
\label{cordensity} $vect\left( \bigcup\limits_{n,m}(V_{n}^{(1)}\otimes
V_{m}^{(2)})\right) $ is dense in $H_{0}^{1}(\Omega ).$ in particular, if $%
(V_{n}^{(1)})$ and $(V_{n}^{(2)})$ are nondecreasing, then $%
\bigcup\limits_{n}(V_{n}^{(1)}\otimes V_{n}^{(2)})$ is dense in $%
H_{0}^{1}(\Omega ).$
\end{cor}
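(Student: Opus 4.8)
The plan is to deduce the corollary from the preceding Lemma together with Lemma~\ref{lemdensitysob}, via the density rule recalled in the introduction. First, the preceding Lemma gives that $vect\left(\bigcup_{n,m}(V_n^{(1)}\otimes V_m^{(2)})\right)$ is dense in $H_0^1(\omega_1)\otimes H_0^1(\omega_2)$, the latter being equipped with the topology induced by $H_0^1(\Omega)$. Second, Lemma~\ref{lemdensitysob} states that $H_0^1(\omega_1)\otimes H_0^1(\omega_2)$ is dense in $H_0^1(\Omega)$. Since the canonical injection $H_0^1(\omega_1)\otimes H_0^1(\omega_2)\hookrightarrow H_0^1(\Omega)$ is continuous (it is, by construction, an isometry of $H_0^1(\omega_1)\otimes H_0^1(\omega_2)$ with its induced norm onto its image), the density rule applies with $E=H_0^1(\omega_1)\otimes H_0^1(\omega_2)$ and $F=H_0^1(\Omega)$: every subset dense in $E$ is dense in $F$. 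Applying this to the dense subset $vect\left(\bigcup_{n,m}(V_n^{(1)}\otimes V_m^{(2)})\right)$ yields its density in $H_0^1(\Omega)$.

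For the ``in particular'' assertion, I would first check that when $(V_n^{(1)})$ and $(V_n^{(2)})$ are nondecreasing one has $vect\left(\bigcup_{n,m}(V_n^{(1)}\otimes V_m^{(2)})\right)=\bigcup_n(V_n^{(1)}\otimes V_n^{(2)})$. Indeed, for any $n,m$ monotonicity gives $V_n^{(1)}\otimes V_m^{(2)}\subset V_{\max(n,m)}^{(1)}\otimes V_{\max(n,m)}^{(2)}$, so the right-hand side contains $\bigcup_{n,m}(V_n^{(1)}\otimes V_m^{(2)})$; conversely $\bigcup_n(V_n^{(1)}\otimes V_n^{(2)})$ is itself a vector subspace of $H_0^1(\Omega)$, because given finitely many vectors lying respectively in $V_{n_1}^{(1)}\otimes V_{n_1}^{(2)},\dots,V_{n_k}^{(1)}\otimes V_{n_k}^{(2)}$, setting $N=\max_i n_i$ and using monotonicity places all of them in the vector space $V_N^{(1)}\otimes V_N^{(2)}\subset\bigcup_n(V_n^{(1)}\otimes V_n^{(2)})$, so any linear combination stays there. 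Hence the two spans coincide, and the first part of the corollary gives the density of $\bigcup_n(V_n^{(1)}\otimes V_n^{(2)})$ in $H_0^1(\Omega)$.

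There is no genuine obstacle in this corollary: it is pure bookkeeping on top of the previous Lemma and Lemma~\ref{lemdensitysob}. The only point worth a line of care is the identification of topologies --- that density in $H_0^1(\omega_1)\otimes H_0^1(\omega_2)$ with the induced norm transfers to density in $H_0^1(\Omega)$ --- which is exactly what the continuity of the injection in the density rule takes care of.
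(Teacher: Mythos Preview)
Your proof is correct and follows exactly the intended route: the corollary is meant to be an immediate consequence of the preceding Lemma combined with Lemma~\ref{lemdensitysob} via the density rule recalled in the introduction, and your verification of the ``in particular'' part through monotonicity is the natural argument. The paper does not spell out a separate proof, so your write-up is precisely what is implicit in the placement of the corollary.
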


\section{\protect\bigskip Semigroups}

\label{AppendixB}

\begin{lem}
\label{lem-tensor}Assume $(\ref{hypA1})\text{, } (\ref{hypA2})$, $(\ref{hypAd2})$ and
let $f_{1}\in L^{2}(\omega _{1}),$ $f_{2}\in L^{2}(\omega _{2})$, then for
every $\mu >0$ we have: 
\begin{equation*}
R_{0,\mu }(f_{1}\otimes f_{2})=f_{1}\otimes (R_{0,\mu }f_{2}).
\end{equation*}%
Notice that $R_{0,\mu }f_{2}\in H_{0}^{1}(\omega _{2}).$ Moreover, we have 
\begin{equation*}
\mathcal{A}_{0,\mu }(f_{1}\otimes f_{2})=f_{1}\otimes (\mathcal{A}_{0,\mu
}f_{2}).
\end{equation*}
Notice also that $\mathcal{A}_{0,\mu }f_{2}\in L^{2}(\omega _{2}).$ Here, $%
\mathcal{A}_{0,\mu }$ is the Yosida approximation of $\mathcal{A}_{0}$, recall
that $\mathcal{A}_{0,\mu }=\mu \mathcal{A}_{0}R_{0,\mu }$.
\end{lem}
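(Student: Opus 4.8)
The plan is to exhibit $f_1\otimes(R_{0,\mu}f_2)$ as \emph{the} solution of the variational problem $(\ref{resolvent0})$ that defines $R_{0,\mu}(f_1\otimes f_2)$, and then to conclude by uniqueness. Write $w:=R_{0,\mu}f_2\in H_0^1(\omega_2)$ for the unique solution of
\begin{equation*}
\mu\int_{\omega_2}w\psi\,dX_2+\int_{\omega_2}A_{22}\nabla_{X_2}w\cdot\nabla_{X_2}\psi\,dX_2=\int_{\omega_2}f_2\psi\,dX_2,\qquad\forall\psi\in H_0^1(\omega_2),
\end{equation*}
and set $v:=f_1\otimes w$, i.e. $v(X_1,X_2)=f_1(X_1)w(X_2)$. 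First I would check that $v\in H_0^1(\Omega;\omega_2)$: since $f_1\in L^2(\omega_1)$ and $w,\nabla_{X_2}w\in L^2(\omega_2)$, Fubini gives $v\in L^2(\Omega)$ and $\nabla_{X_2}v=f_1\otimes\nabla_{X_2}w\in L^2(\Omega)^{N-q}$, while for a.e.\ $X_1\in\omega_1$ the slice $v(X_1,\cdot)=f_1(X_1)w$ lies in $H_0^1(\omega_2)$.

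Next I would test. The role of hypothesis $(\ref{hypAd2})$ is that the coefficient $A_{22}(X_2)$ in $(\ref{resolvent0})$ is the \emph{same} in every $X_1$-slice, so the single function $w$ can serve all slices: by linearity, for a.e.\ $X_1\in\omega_1$ the slice $v(X_1,\cdot)=f_1(X_1)w$ solves the one-dimensional equation above with datum $f_1(X_1)f_2=(f_1\otimes f_2)(X_1,\cdot)$. Choosing $\psi=\varphi(X_1,\cdot)\in H_0^1(\omega_2)$ — legitimate for a.e.\ $X_1$ precisely because $\varphi\in H_0^1(\Omega;\omega_2)$ — and integrating over $\omega_1$, with Fubini applied to each of the three terms, yields
\begin{equation*}
\mu\int_\Omega v\varphi\,dx+\int_\Omega A_{22}\nabla_{X_2}v\cdot\nabla_{X_2}\varphi\,dx=\int_\Omega (f_1\otimes f_2)\varphi\,dx,\qquad\forall\varphi\in H_0^1(\Omega;\omega_2).
\end{equation*}
Hence $v$ solves $(\ref{resolvent0})$ with datum $f_1\otimes f_2$, and by uniqueness of the solution in $H_0^1(\Omega;\omega_2)$ we conclude $R_{0,\mu}(f_1\otimes f_2)=v=f_1\otimes(R_{0,\mu}f_2)$, noting in passing that $R_{0,\mu}f_2\in H_0^1(\omega_2)$. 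The only care needed here is the joint measurability on $\Omega$ required to invoke Fubini, which is routine.

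For the Yosida approximation the work is essentially done: on $L^2(\Omega)$ as well as on $L^2(\omega_2)$ one has the identity $\mathcal{A}_{0,\mu}=\mu^2R_{0,\mu}-\mu I$ recalled in the preliminaries, and $(g_1,g_2)\mapsto g_1\otimes g_2$ is bilinear, so
\begin{equation*}
\mathcal{A}_{0,\mu}(f_1\otimes f_2)=\mu^2R_{0,\mu}(f_1\otimes f_2)-\mu(f_1\otimes f_2)=f_1\otimes\bigl(\mu^2R_{0,\mu}f_2-\mu f_2\bigr)=f_1\otimes(\mathcal{A}_{0,\mu}f_2),
\end{equation*}
and in particular $\mathcal{A}_{0,\mu}f_2=\mu^2R_{0,\mu}f_2-\mu f_2\in L^2(\omega_2)$ as asserted. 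I expect the only (mild) obstacle to be the bookkeeping in showing $v\in H_0^1(\Omega;\omega_2)$ and that the slice equation holds for a.e.\ $X_1$; the separation of variables itself hinges entirely on $(\ref{hypAd2})$, without which the $X_1$-independent ansatz $v=f_1\otimes w$ would not be consistent with the $X_1$-slices of the $\Omega$-problem.
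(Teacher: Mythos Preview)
Your argument for the resolvent identity is essentially the paper's proof: construct $v=f_1\otimes w$ with $w$ the $\omega_2$-solution, check $v\in H_0^1(\Omega;\omega_2)$, test against slices $\varphi(X_1,\cdot)$ and integrate over $\omega_1$, then conclude by uniqueness. For the Yosida identity the paper instead writes $\mathcal{A}_{0,\mu}=\mu\mathcal{A}_0R_{0,\mu}$ and argues that $\mathcal{A}_0(f_1\otimes R_{0,\mu}f_2)=f_1\otimes\mathcal{A}_0(R_{0,\mu}f_2)$ via $(\ref{hypAd2})$; your route through the bounded-operator identity $\mathcal{A}_{0,\mu}=\mu^2R_{0,\mu}-\mu I$ is a slight simplification, since it reduces the second assertion to pure bilinearity and the first part, without needing to verify separately that the unbounded operator $\mathcal{A}_0$ respects the tensor structure.
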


\begin{proof}
Let $v_{2}\in H_{0}^{1}(\omega _{2})$ be the unique solution in $%
H_{0}^{1}(\omega _{2})$ to 
\begin{equation}
\mu \int_{\omega _{2}}v_{2}\varphi _{2}dX_{2}+\int_{\omega
_{2}}A_{22}(X_{2})\nabla _{X_{2}}v_{2}\cdot \nabla _{X_{2}}\varphi
_{2}dX_{2}=\int_{\omega _{2}}f_{2}\text{ }\varphi _{2}dX_{2}\text{, }\forall
\varphi _{2}\in H_{0}^{1}(\omega _{2}),  \label{1}
\end{equation}%
Let $\varphi \in H_{0}^{1}(\Omega ;\omega _{2}),$ then $\varphi (X_{1},\cdot
)\in H_{0}^{1}(\omega _{2})$ for a.e. $X_{1}\in\omega _{1}$. Let $f_{1}\in L^{2}(\omega _{1}),$ multiplying
(\ref{1}) by $f_{1}$, testing in (\ref{1}) with $\varphi (X_{1},\cdot )$ and integrating over $\omega _{1}$ yields 
\begin{equation*}
\mu \int_{\Omega }f_{1}v_{2}\varphi dx+\int_{\Omega }A_{22}(X_{2})\nabla
_{X_{2}}(f_{1}v_{2})\cdot \nabla _{X_{2}}\varphi dx=\int_{\Omega }f_{1}f_{2}%
\text{ }\varphi dx.
\end{equation*}%
It is clear that $f_{1}v_{2}\in H_{0}^{1}(\Omega ;\omega _{2})$ whence, $%
R_{0,\mu }(f_{1}\otimes f_{2})=f_{1}\otimes v_{2}$, in particular when $%
f_{1}=1$ we have $R_{0,\mu }(f_{2})=v_{2},$ and therefore $R_{0,\mu
}(f_{1}\otimes f_{2})=f_{1}\otimes R_{0,\mu }(f_{2}).$
The second assertion follows immediately from the first one, in fact 
\begin{equation*}
\mathcal{A}_{0,\mu }(f_{1}\otimes f_{2})=\mu \mathcal{A}_{0}R_{0,\mu
}(f_{1}\otimes f_{2})=\mu \mathcal{A}_{0}(f_{1}\otimes R_{0,\mu }f_{2}).
\end{equation*}%
We have $R_{0,\mu }f_{2}\in D(\mathcal{A}_{0})\cap H_{0}^{1}(\omega _{2})$
then by using (\ref{hypAd2}) we get
\begin{equation*}
\mathcal{A}_{0}(f_{1}\otimes R_{0,\mu }f_{2})=f_{1}\otimes \mathcal{A}%
_{0}(R_{0,\mu }f_{2}),
\end{equation*}%
Notice that the operator $\mathcal{A}_{0}$ is independent of 
the $X_{1}$ direction and that $\mathcal{A}_{0}(R_{0,\mu }f_{2})\in L^{2}(\omega _{2})$.
Finally we get 
\begin{equation*}
\mathcal{A}_{0,\mu }(f_{1}\otimes f_{2})=\mu f_{1}\otimes \mathcal{A}%
_{0}(R_{0,\mu }f_{2})=f_{1}\otimes \mathcal{A}_{0,\mu }(f_{2}).
\end{equation*}
\end{proof}

Now, let $s\geq 0$, $\mu >0$ and $g\in L^{2}(\Omega )$. To simplify the notations, we denote $%
f_{g}:=e^{s\mathcal{A}_{0,\mu }}g$  instead of $f_{g,s,\mu}$. 

\begin{lem}
\label{lem-tensor-semigroup}Assume $(\ref{hypA1})$, $(\ref{hypA2}),$ $(\ref%
{hypAd2}).$ Let $g=g_{1}\otimes g_{2}\in L^{2}(\omega _{1})\otimes
L^{2}(\omega _{2})$, then for $s\geq 0$, $\mu >0$ we have:%
\begin{equation*}
f_{g}=g_{1}\otimes e^{s\mathcal{A}_{0,\mu }}g_{2}.
\end{equation*}%
Notice that $e^{s\mathcal{A}_{0,\mu }}g_{2}\in L^{2}(\omega _{2}).$
\end{lem}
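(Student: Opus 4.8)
The plan is to reduce everything to Lemma \ref{lem-tensor} via the power-series expansion of the exponential. Since $\mathcal{A}_{0,\mu }=\mu ^{2}R_{0,\mu }-\mu I$ is a \emph{bounded} operator on $L^{2}(\Omega )$, the operator $e^{s\mathcal{A}_{0,\mu }}$ is given by the norm-convergent series $\sum_{k\geq 0}\frac{s^{k}}{k!}(\mathcal{A}_{0,\mu })^{k}$; likewise, the Yosida approximation $\mathcal{A}_{0,\mu }$ acting on $L^{2}(\omega _{2})$ is bounded and $e^{s\mathcal{A}_{0,\mu }}$ on $L^{2}(\omega _{2})$ is given by the analogous series in the $X_{2}$-operator.

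First I would prove by induction on $k\geq 0$ that $(\mathcal{A}_{0,\mu })^{k}(g_{1}\otimes g_{2})=g_{1}\otimes (\mathcal{A}_{0,\mu })^{k}g_{2}$. The case $k=0$ is trivial, and the inductive step is precisely Lemma \ref{lem-tensor} applied to the pair $(g_{1},(\mathcal{A}_{0,\mu })^{k}g_{2})$, which is legitimate because $(\mathcal{A}_{0,\mu })^{k}g_{2}\in L^{2}(\omega _{2})$ (the Yosida approximation maps $L^{2}(\omega _{2})$ into itself).

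Next I would sum the series. Set $\Sigma _{n}:=\sum_{k=0}^{n}\frac{s^{k}}{k!}(\mathcal{A}_{0,\mu })^{k}(g_{1}\otimes g_{2})$; by the previous step and bilinearity of the tensor product, $\Sigma _{n}=g_{1}\otimes h_{n}$ with $h_{n}:=\sum_{k=0}^{n}\frac{s^{k}}{k!}(\mathcal{A}_{0,\mu })^{k}g_{2}\to e^{s\mathcal{A}_{0,\mu }}g_{2}$ in $L^{2}(\omega _{2})$. From the Fubini identity $\left\Vert g_{1}\otimes w\right\Vert _{L^{2}(\Omega )}=\left\Vert g_{1}\right\Vert _{L^{2}(\omega _{1})}\left\Vert w\right\Vert _{L^{2}(\omega _{2})}$, the map $w\longmapsto g_{1}\otimes w$ is continuous from $L^{2}(\omega _{2})$ into $L^{2}(\Omega )$, hence $\Sigma _{n}=g_{1}\otimes h_{n}\to g_{1}\otimes e^{s\mathcal{A}_{0,\mu }}g_{2}$ in $L^{2}(\Omega )$. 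On the other hand $\Sigma _{n}\to e^{s\mathcal{A}_{0,\mu }}(g_{1}\otimes g_{2})=f_{g}$ by norm convergence of the exponential series. Uniqueness of limits yields $f_{g}=g_{1}\otimes e^{s\mathcal{A}_{0,\mu }}g_{2}$, and the concluding remark $e^{s\mathcal{A}_{0,\mu }}g_{2}\in L^{2}(\omega _{2})$ is automatic.

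I do not expect any genuine obstacle here: the only point requiring a little care is the passage to the limit inside the tensor factor, which is handled by the elementary continuity estimate above; everything else is a direct consequence of Lemma \ref{lem-tensor} and the boundedness of $\mathcal{A}_{0,\mu }$.
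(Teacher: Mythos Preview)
Your proof is correct and follows essentially the same approach as the paper: expand $e^{s\mathcal{A}_{0,\mu}}$ as its norm-convergent power series, use Lemma \ref{lem-tensor} inductively to get $(\mathcal{A}_{0,\mu})^{k}(g_{1}\otimes g_{2})=g_{1}\otimes(\mathcal{A}_{0,\mu})^{k}g_{2}$, and sum. You are in fact slightly more explicit than the paper about why the limit passes through the tensor factor (via the continuity of $w\mapsto g_{1}\otimes w$), but this is the same argument.
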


\begin{proof}
we have 
\begin{equation*}
f_{g}=e^{s\mathcal{A}_{0,\mu }}g=\sum_{k=0}^{\infty }\frac{s^{k}}{k!}%
\mathcal{A}_{0,\mu }^{k}g,
\end{equation*}%
where the series converges in $L^{2}(\Omega )$. By an immediate induction we get by using Lemma \ref{lem-tensor} 
\begin{equation*}
\forall k\in 
\mathbb{N}
:\mathcal{A}_{0,\mu }^{k}g=g_{1}\otimes \mathcal{A}_{0,\mu }^{k}g_{2},
\end{equation*}
with $\mathcal{A}_{0,\mu }^{k}g_{2}\in L^{2}(\omega _{2})$ for every $k\in 
\mathbb{N}
,$ and the Lemma follows.
\end{proof}

\begin{lem}
\label{lem-semigroup-regularity1}Assume $(\ref{hypA1})$, $(\ref{hypA2}),$ $(\ref%
{hypAd2})$. Let $g\in H^{2}(\omega _{1})\otimes
L^{2}(\omega _{2})$, then for $s\geq0$, $\mu >0,$ $i,j=1,...,q$ we have $%
D_{x_{i}x_{j}}^{2}f_{g},$ $D_{x_{i}}f_{g}\in L^{2}(\Omega ),$ with:
\begin{equation}
D_{x_{i}x_{j}}^{2}f_{g}=e^{s\mathcal{A}_{0,\mu }}(D_{x_{i}x_{j}}^{2}g),\text{
}D_{x_{i}}f_{g}=e^{s\mathcal{A}_{0,\mu }}(D_{x_{i}}g).  \label{ass1}
\end{equation}%
\begin{equation}
\left\Vert D_{x_{i}x_{j}}^{2}f_{g}\right\Vert _{L^{2}(\Omega )}\leq
\left\Vert D_{x_{i}x_{j}}^{2}g\right\Vert _{L^{2}(\Omega )},\text{ }%
\left\Vert D_{x_{i}}f_{g}\right\Vert _{L^{2}(\Omega )}\leq \left\Vert
D_{x_{i}}g\right\Vert _{L^{2}(\Omega )}.  \label{ass2}
\end{equation}
\end{lem}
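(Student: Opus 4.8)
The plan is to reduce everything to the one‑term case and then invoke Lemma~\ref{lem-tensor-semigroup}, which already records that $e^{s\mathcal{A}_{0,\mu }}$ acts only on the $X_2$‑slot of a simple tensor. Since $H^2(\omega_1)\otimes L^2(\omega_2)$ is the algebraic tensor product, write $g=\sum_{k=1}^{m}g_1^{(k)}\otimes g_2^{(k)}$ with $g_1^{(k)}\in H^2(\omega_1)$ and $g_2^{(k)}\in L^2(\omega_2)$, a finite sum. Applying Lemma~\ref{lem-tensor-semigroup} to each summand gives
\[
f_g=e^{s\mathcal{A}_{0,\mu }}g=\sum_{k=1}^{m}g_1^{(k)}\otimes\big(e^{s\mathcal{A}_{0,\mu }}g_2^{(k)}\big),
\]
with each $e^{s\mathcal{A}_{0,\mu }}g_2^{(k)}\in L^2(\omega_2)$.

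Next I would differentiate in $X_1$. For a simple tensor $w_1\otimes w_2$ with $w_1\in H^2(\omega_1)$ and $w_2\in L^2(\omega_2)$, testing against $\varphi_1\otimes\varphi_2$, $\varphi_1\in\mathcal{D}(\omega_1)$, $\varphi_2\in\mathcal{D}(\omega_2)$, and using Fubini shows that, for $i,j=1,\dots,q$, the distributional derivatives in $\Omega$ are $D_{x_i}(w_1\otimes w_2)=(D_{x_i}w_1)\otimes w_2$ and $D^2_{x_ix_j}(w_1\otimes w_2)=(D^2_{x_ix_j}w_1)\otimes w_2$, both in $L^2(\Omega)$ since $D_{x_i}w_1\in H^1(\omega_1)$ and $D^2_{x_ix_j}w_1\in L^2(\omega_1)$. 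Applying this with $w_1=g_1^{(k)}$, $w_2=e^{s\mathcal{A}_{0,\mu }}g_2^{(k)}$ and summing over $k$ yields $D_{x_i}f_g,\,D^2_{x_ix_j}f_g\in L^2(\Omega)$ with
\[
D_{x_i}f_g=\sum_{k=1}^{m}(D_{x_i}g_1^{(k)})\otimes\big(e^{s\mathcal{A}_{0,\mu }}g_2^{(k)}\big),\qquad
D^2_{x_ix_j}f_g=\sum_{k=1}^{m}(D^2_{x_ix_j}g_1^{(k)})\otimes\big(e^{s\mathcal{A}_{0,\mu }}g_2^{(k)}\big).
\]
Since $D_{x_i}\colon H^2(\omega_1)\to L^2(\omega_1)$ is linear, $D_{x_i}g=\sum_k(D_{x_i}g_1^{(k)})\otimes g_2^{(k)}$ is a well-defined element of $L^2(\omega_1)\otimes L^2(\omega_2)\subset L^2(\Omega)$, and similarly for $D^2_{x_ix_j}g$. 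Applying Lemma~\ref{lem-tensor-semigroup} once more, now in the form $e^{s\mathcal{A}_{0,\mu }}(u_1\otimes u_2)=u_1\otimes(e^{s\mathcal{A}_{0,\mu }}u_2)$, to the right-hand sides above identifies them with $e^{s\mathcal{A}_{0,\mu }}(D_{x_i}g)$ and $e^{s\mathcal{A}_{0,\mu }}(D^2_{x_ix_j}g)$, which is $(\ref{ass1})$.

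The bounds $(\ref{ass2})$ then follow at once from $(\ref{ass1})$ and the contractivity $\|e^{s\mathcal{A}_{0,\mu }}\|_{\mathcal{L}(L^2(\Omega))}\leq 1$ noted in the preliminaries (recall $\mathcal{A}_{0,\mu }=\mu^2R_{0,\mu }-\mu I$ with $\|R_{0,\mu }\|\leq 1/\mu$): indeed $\|D_{x_i}f_g\|_{L^2(\Omega)}=\|e^{s\mathcal{A}_{0,\mu }}(D_{x_i}g)\|_{L^2(\Omega)}\leq\|D_{x_i}g\|_{L^2(\Omega)}$, and identically for $D^2_{x_ix_j}$. The only genuinely delicate point is the commutation of $X_1$‑differentiation with the semigroup $e^{s\mathcal{A}_{0,\mu }}$; here it is handled cleanly, and without any boundary issues, by the tensor identity of Lemma~\ref{lem-tensor-semigroup}. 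If one preferred to avoid the explicit tensor decomposition, one could argue instead that $\mathcal{A}_{0,\mu }$ involves only the $X_2$‑variables and hence commutes with the translations $\tau_h$ in the $x_i$‑direction, so $\tau_hf_g=e^{s\mathcal{A}_{0,\mu }}(\tau_hg)$ on $\omega_1'\times\omega_2$ for $\omega_1'\subset\subset\omega_1$ and $0<h<d(\omega_1',\partial\omega_1)$; dividing by $h$, using $g\in H^2(\omega_1)\otimes L^2(\omega_2)$ to pass to the limit $h\to0$ in $L^2$, and invoking the contractivity of $e^{s\mathcal{A}_{0,\mu }}$ again (as in the proof of Lemma~\ref{estgradu}), recovers both $(\ref{ass1})$ and $(\ref{ass2})$.
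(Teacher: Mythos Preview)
Your proof is correct and follows essentially the same approach as the paper: reduce to simple tensors, apply Lemma~\ref{lem-tensor-semigroup} to write $f_g$ as a sum of $g_1^{(k)}\otimes e^{s\mathcal{A}_{0,\mu}}g_2^{(k)}$, differentiate in the $X_1$-directions, apply Lemma~\ref{lem-tensor-semigroup} once more, and conclude the norm bounds from the contractivity $\|e^{s\mathcal{A}_{0,\mu}}\|\leq 1$. The only difference is cosmetic---the paper treats a single tensor first and then invokes linearity, while you carry the finite sum throughout---and your additional difference-quotient remark is a valid alternative not present in the paper.
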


\begin{proof}
1) Suppose the simple case when $g=g_{1}\otimes g_{2}.$
So, let $g=g_{1}\otimes g_{2}\in H^{2}(\omega _{1})\otimes L^{2}(\omega _{2})
$ and let us prove assertions (\ref{ass1}). By Lemma \ref{lem-tensor-semigroup} we get 
\begin{equation*}
f_{g}=g_{1}\otimes e^{s\mathcal{A}_{0,\mu }}(g_{2})\text{, }
\end{equation*}%
with $e^{s\mathcal{A}_{0,\mu }}g_{2}\in L^{2}(\omega _{2}).$Hence, for $%
i,j=1,...,q$ we get $D_{x_{i}x_{j}}^{2}f_{g}\in L^{2}(\Omega )$ and $%
D_{x_{i}x_{j}}^{2}f_{g}=\left( D_{x_{i}x_{j}}^{2}g_{1}\right) \otimes e^{s%
\mathcal{A}_{0,\mu }}g_{2}$, and applying Lemma \ref{lem-tensor-semigroup} we get%
\begin{equation*}
D_{x_{i}x_{j}}^{2}f_{g}=e^{s\mathcal{A}_{0,\mu }}(D_{x_{i}x_{j}}^{2}g).
\end{equation*}
Similarly we get $D_{x_{i}}f_{g}=e^{s\mathcal{A}_{0,\mu }}(D_{x_{i}}g)$, and assertion (\ref{ass1}) follows when $g=g_{1}\otimes g_{2}.$

2) Now, let $g\in H^{2}(\omega _{1})\otimes L^{2}(\omega _{2}),$ since $g$
is a finite sum of elements of the form $g_{1}\otimes g_{2}$, then by
linearity we get $D_{x_{i}x_{j}}^{2}f_{g}, \text{ }D_{x_{i}}f_{g}\in L^{2}(\Omega )$ and 
\begin{equation*}
D_{x_{i}x_{j}}^{2}f_{g}=e^{s\mathcal{A}_{0,\mu }}(D_{x_{i}x_{j}}^{2}g)\text{%
, }D_{x_{i}}f_{g}=e^{s\mathcal{A}_{0,\mu }}(D_{x_{i}}g), \text{ for }%
i,j=1,...,q,
\end{equation*}%
therefore%
\begin{equation*}
\left\Vert D_{x_{i}x_{j}}^{2}f_{g}\right\Vert _{L^{2}(\Omega )}\leq
\left\Vert e^{s\mathcal{A}_{0,\mu }}\right\Vert \left\Vert
D_{x_{i}x_{j}}^{2}g\right\Vert _{L^{2}(\Omega )}\leq \left\Vert
D_{x_{i}x_{j}}^{2}g\right\Vert _{L^{2}(\Omega )}, \text{ for } %
i,j=1,...,q,
\end{equation*}%
and similarly we obtain the second inequality of $(\ref{ass2})$.
\end{proof}

\begin{lem}
\label{lem-semigroup-regularity2}Assume $(\ref{hypA1})$, $(\ref{hypA2}),$ $(\ref%
{hypAd2})$ and $(\ref{A-liptschitz}).$ Let $g\in \left( H_{0}^{1}\cap
H^{2}(\omega _{1})\right) \otimes (H_{0}^{1}\cap H^{2}(\omega _{2}))$, then
for $s\geq 0$, $\mu >0$ we have: 
\begin{equation}
f_{g}\in D(\mathcal{A}_{0})\text{, } \mathcal{A}_{0}(f_{g})\in H_{0}^{1}(\Omega ;\omega _{1})\text{, and }%
D_{x_{i}}\left( \mathcal{A}_{0}f_{g}\right) =e^{s\mathcal{A}_{0,\mu
}}(D_{x_{i}}\mathcal{A}_{0}g),\text{ }i=1,...,q,  \label{ass3}
\end{equation}
\begin{equation}
\left\Vert\left( \mathcal{A}_{0}f_{g}\right) \right\Vert
_{L^{2}(\Omega )}\leq \left\Vert \mathcal{A}_{0}g\right\Vert
_{L^{2}(\Omega )} \text{and \ } 
\left\Vert D_{x_{i}}\left( \mathcal{A}_{0}f_{g}\right) \right\Vert
_{L^{2}(\Omega )}\leq \left\Vert D_{x_{i}}\mathcal{A}_{0}g\right\Vert
_{L^{2}(\Omega )},\text{ }i=1,...,q.  \label{ass4}
\end{equation}
\end{lem}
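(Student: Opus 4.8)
The plan is to mimic the proof of Lemma~\ref{lem-semigroup-regularity1}: establish the assertions first for a pure tensor $g=g_{1}\otimes g_{2}$, then extend them to a general $g$ by linearity, the two norm bounds being an immediate consequence of the contractivity $\Vert e^{s\mathcal{A}_{0,\mu}}\Vert\leq 1$. First I would observe that, since $g_{1}\in H_{0}^{1}\cap H^{2}(\omega_{1})$ and $g_{2}\in H_{0}^{1}\cap H^{2}(\omega_{2})$, every pure tensor $g_{1}\otimes g_{2}$ belongs to $H_{0}^{1}(\Omega)\cap H^{2}(\Omega)$ (the only nontrivial second derivatives are the mixed ones, equal to $D_{x_{i}}g_{1}\otimes D_{x_{j}}g_{2}\in L^{2}(\Omega)$), hence so does $g$, so that $g\in D(\mathcal{A}_{0})$ by (\ref{A-liptschitz}). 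Then the abstract commuting relation (\ref{abstact-commute}) between $\mathcal{A}_{0}$ and its Yosida semigroup gives $f_{g}=e^{s\mathcal{A}_{0,\mu}}g\in D(\mathcal{A}_{0})$ and
\[
\mathcal{A}_{0}f_{g}=e^{s\mathcal{A}_{0,\mu}}\mathcal{A}_{0}g,
\]
whence $\Vert\mathcal{A}_{0}f_{g}\Vert_{L^{2}(\Omega)}\leq\Vert\mathcal{A}_{0}g\Vert_{L^{2}(\Omega)}$.

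Next I would use the tensor structure to identify the $X_{1}$-derivatives. Because $A_{22}$ depends only on $X_{2}$ by (\ref{hypAd2}), one has $\mathcal{A}_{0}(g_{1}\otimes g_{2})=g_{1}\otimes\mathcal{A}_{0}g_{2}$ with $\mathcal{A}_{0}g_{2}\in L^{2}(\omega_{2})$ (again by (\ref{A-liptschitz})), so $\mathcal{A}_{0}g\in H^{2}(\omega_{1})\otimes L^{2}(\omega_{2})$, and the same holds for a general $g$ by linearity. Applying Lemma~\ref{lem-semigroup-regularity1} with $g$ replaced by $\mathcal{A}_{0}g$ yields $D_{x_{i}}f_{\mathcal{A}_{0}g}\in L^{2}(\Omega)$, $D_{x_{i}}f_{\mathcal{A}_{0}g}=e^{s\mathcal{A}_{0,\mu}}(D_{x_{i}}\mathcal{A}_{0}g)$ and $\Vert D_{x_{i}}f_{\mathcal{A}_{0}g}\Vert_{L^{2}(\Omega)}\leq\Vert D_{x_{i}}\mathcal{A}_{0}g\Vert_{L^{2}(\Omega)}$ for $i=1,\dots,q$. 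Since $f_{\mathcal{A}_{0}g}=e^{s\mathcal{A}_{0,\mu}}\mathcal{A}_{0}g=\mathcal{A}_{0}f_{g}$ by the previous paragraph, this is exactly the derivative identity in (\ref{ass3}) and the second inequality in (\ref{ass4}).

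Finally, to obtain $\mathcal{A}_{0}f_{g}\in H_{0}^{1}(\Omega;\omega_{1})$ I would write $g=\sum_{k}g_{1}^{(k)}\otimes g_{2}^{(k)}$ and invoke Lemma~\ref{lem-tensor-semigroup}, according to which $e^{s\mathcal{A}_{0,\mu}}(g_{1}^{(k)}\otimes\mathcal{A}_{0}g_{2}^{(k)})=g_{1}^{(k)}\otimes e^{s\mathcal{A}_{0,\mu}}(\mathcal{A}_{0}g_{2}^{(k)})$, so that $\mathcal{A}_{0}f_{g}=\sum_{k}g_{1}^{(k)}\otimes e^{s\mathcal{A}_{0,\mu}}(\mathcal{A}_{0}g_{2}^{(k)})$; for a.e.\ $X_{2}\in\omega_{2}$ the function $(\mathcal{A}_{0}f_{g})(\cdot,X_{2})$ is then a finite linear combination of the $g_{1}^{(k)}\in H_{0}^{1}(\omega_{1})$, hence in $H_{0}^{1}(\omega_{1})$, which together with $\nabla_{X_{1}}(\mathcal{A}_{0}f_{g})\in L^{2}(\Omega)^{q}$ (from the previous step) gives the claimed membership. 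The only delicate point, and the main obstacle, is the bookkeeping of these tensor decompositions: one must exploit (\ref{hypAd2}) to see that $\mathcal{A}_{0}$, being a pure $X_{2}$-operator, factors through the tensor product, so that $\mathcal{A}_{0}g$ again lies in $H^{2}(\omega_{1})\otimes L^{2}(\omega_{2})$ and Lemmas~\ref{lem-tensor-semigroup} and~\ref{lem-semigroup-regularity1} are applicable to it; once this is clear, everything reduces to those lemmas and to the abstract semigroup facts recalled in the preliminaries.
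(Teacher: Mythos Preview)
Your proposal is correct and follows essentially the same route as the paper: first reduce to pure tensors, use $g\in D(\mathcal{A}_{0})$ together with the commutation relation (\ref{abstact-commute}) to get $\mathcal{A}_{0}f_{g}=e^{s\mathcal{A}_{0,\mu}}\mathcal{A}_{0}g$, then exploit (\ref{hypAd2}) to factor $\mathcal{A}_{0}$ through the tensor product and read off the $X_{1}$-derivatives and the $H_{0}^{1}(\Omega;\omega_{1})$ membership, concluding by linearity and the contraction bound $\Vert e^{s\mathcal{A}_{0,\mu}}\Vert\leq 1$. The only cosmetic difference is that you obtain the identity $D_{x_{i}}(\mathcal{A}_{0}f_{g})=e^{s\mathcal{A}_{0,\mu}}(D_{x_{i}}\mathcal{A}_{0}g)$ by applying Lemma~\ref{lem-semigroup-regularity1} to $\mathcal{A}_{0}g\in H^{2}(\omega_{1})\otimes L^{2}(\omega_{2})$, whereas the paper redoes that tensor computation by hand; the content is identical.
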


\begin{proof}
1) Suppose $g=g_{1}\otimes g_{2}\in (H_{0}^{1}\cap H^{2}(\omega
_{1}))\otimes (H_{0}^{1}\cap H^{2}(\omega _{2}))$ and let us prove $(\ref%
{ass3})$. Since $g\in D(\mathcal{A}_{0})$, thanks to (\ref{A-liptschitz}), then $f_{g}=e^{s%
\mathcal{A}_{0,\mu }}g\in D(\mathcal{A}_{0})$\ and $\mathcal{A}_{0}f_{g}=e^{s\mathcal{A}_{0,\mu }}\mathcal{A}_{0}g$ ( thanks to (\ref{abstact-commute})).
Now, we have 
\begin{equation*}
\mathcal{A}_{0}f_{g}=\mathcal{A}_{0}(e^{s\mathcal{A}_{0,\mu }}g)=\mathcal{A}%
_{0}\left( g_{1}\otimes e^{s\mathcal{A}_{0,\mu }}g_{2}\right) .
\end{equation*}
Notice that, $g_{2}\in D(\mathcal{A}_{0})$, thanks to (\ref{A-liptschitz}), then $e^{s\mathcal{A}_{0,\mu
}}g_{2}\in D(\mathcal{A}_{0})$ ( thanks to (\ref{abstact-commute}))$,$ hence 
\begin{equation*}
\mathcal{A}_{0}f_{g}=g_{1}\mathcal{A}_{0}e^{s\mathcal{A}_{0,\mu }}g_{2},
\end{equation*}%
where we have used the fact that $\mathcal{A}_{0}$ is
independent of the $X_{1}-direction.$ Since $e^{s\mathcal{A}%
_{0,\mu }}$ and $\mathcal{A}_{0}$ \ commute on $D(\mathcal{A}_{0}),$ then 
\begin{equation*}
\mathcal{A}_{0}f_{g}=g_{1}e^{s\mathcal{A}_{0,\mu }}\mathcal{A}_{0}g_{2}.
\end{equation*}%
Now, we have $\mathcal{A}_{0}g_{2}\in L^{2}(\omega _{2})$ then $e^{s\mathcal{%
A}_{0,\mu }}\mathcal{A}_{0}g_{2}\in L^{2}(\omega _{2})$ (thanks to Lemma %
\ref{lem-tensor-semigroup}), however $g_{1}\in H_{0}^{1}(\omega _{1})$, then $\mathcal{A}_{0}f_{g}\in H_{0}^{1}(\Omega ;\omega _{1})$. Whence, for $i=1,...,q$ we have 
\begin{equation*}
D_{x_{i}}\left( \mathcal{A}_{0}f_{g}\right) =D_{x_{i}}g_{1}\otimes e^{s%
\mathcal{A}_{0,\mu }}\mathcal{A}_{0}g_{2},
\end{equation*}%
and hence by, Lemma \ref{lem-tensor-semigroup} we get 
\begin{eqnarray*}
D_{x_{i}}\left( \mathcal{A}_{0}f_{g}\right)  &=&e^{s\mathcal{A}_{0,\mu
}}\left( D_{x_{i}}g_{1}\otimes \mathcal{A}_{0}g_{2}\right)  \\
&=&e^{s\mathcal{A}_{0,\mu }}\left( D_{x_{i}}\mathcal{A}_{0}g\right) .
\end{eqnarray*}
(Remark that $D_{x_{i}}\mathcal{A}_{0}g\in L^{2}(\Omega )$ since $g_{1}\in
H_{0}^{1}(\omega _{1})$ and $\mathcal{A}_{0}g_{2}\in L^{2}(\omega _{2})).$ \\
2) Now, for a general $g\in (H_{0}^{1}\cap H^{2}(\omega _{1}))\otimes
(H_{0}^{1}\cap H^{2}(\omega _{2})),$ assertion (\ref{ass3}) follows by linearity.
Finally, we show $(\ref{ass4})$. We have 
\begin{eqnarray*}
\left\Vert\left( \mathcal{A}_{0}f_{g}\right) \right\Vert
_{L^{2}(\Omega )} &=&\left\Vert e^{s\mathcal{A}_{0,\mu }}(\mathcal{A%
}_{0}g)\right\Vert _{L^{2}(\Omega )}\leq \left\Vert e^{s\mathcal{A}_{0,\mu
}}\right\Vert \left\Vert \mathcal{A}_{0}g\right\Vert _{L^{2}(\Omega
)} \\
&\leq &\left\Vert\mathcal{A}_{0}g\right\Vert _{L^{2}(\Omega )}.
\end{eqnarray*}
For $i=1,...,q$ we get%
\begin{eqnarray*}
\left\Vert D_{x_{i}}\left( \mathcal{A}_{0}f_{g}\right) \right\Vert
_{L^{2}(\Omega )} &=&\left\Vert e^{s\mathcal{A}_{0,\mu }}(D_{x_{i}}\mathcal{A%
}_{0}g)\right\Vert _{L^{2}(\Omega )}\leq \left\Vert e^{s\mathcal{A}_{0,\mu
}}\right\Vert \left\Vert D_{x_{i}}\mathcal{A}_{0}g\right\Vert _{L^{2}(\Omega
)} \\
&\leq &\left\Vert D_{x_{i}}\mathcal{A}_{0}g\right\Vert _{L^{2}(\Omega )}.
\end{eqnarray*}
\end{proof}

\begin{lem}
\label{lem-semigroup-regularity3}Assume $(\ref{hypA1})$, $(\ref{hypA2}),$ $(\ref%
{hypAd2})$ and $(\ref{A-liptschitz}).$ Let $g\in \left( H_{0}^{1}\cap
H^{2}(\omega _{1})\right) \otimes (H_{0}^{1}\cap H^{2}(\omega _{2}))$, then
for $s\geq 0$, $\mu >0$, $i=1,...,q$, $j=q+1,...,N$ we have $%
D_{x_{j}}f_{g},D_{x_{i}x_{j}}^{2}f_{g}\in L^{2}(\Omega )$ with: 
\begin{equation}
\left\Vert D_{x_{j}}f_{g}\right\Vert _{L^{2}(\Omega )}^{2}\leq \frac{1}{%
\lambda}\left\Vert \mathcal{A}_{0}g\right\Vert _{L^{2}(\Omega
)}\left\Vert g\right\Vert _{L^{2}(\Omega )},\text{ }\left\Vert
D_{x_{i}x_{j}}^{2}f_{g}\right\Vert _{L^{2}(\Omega )}^{2}\leq \frac{1}{%
\lambda}\left\Vert D_{x_{i}}\mathcal{A}_{0}g\right\Vert _{L^{2}(\Omega
)}\left\Vert D_{x_{i}}g\right\Vert _{L^{2}(\Omega )}.  \label{ass5}
\end{equation}
\end{lem}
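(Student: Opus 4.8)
The plan is to exploit the tensorial structure of $g$ together with the key energy identity for the resolvent $R_{0,\mu}$, exactly as in the previous two lemmas. First I would reduce to the elementary case $g=g_1\otimes g_2$ with $g_1\in H_0^1\cap H^2(\omega_1)$ and $g_2\in H_0^1\cap H^2(\omega_2)$; the general case then follows by linearity since $g$ is a finite sum of such products, and the final bounds pass through because $\|e^{s\mathcal A_{0,\mu}}\|\le 1$. By Lemma \ref{lem-tensor-semigroup} we have $f_g=g_1\otimes e^{s\mathcal A_{0,\mu}}g_2$, and writing $h:=e^{s\mathcal A_{0,\mu}}g_2\in H_0^1(\omega_2)$ (it lies in $H_0^1(\omega_2)$ because $g_2\in D(\mathcal A_0)$ and $e^{s\mathcal A_{0,\mu}}$ maps $D(\mathcal A_0)$ into itself by \eqref{abstact-commute}, and $D(\mathcal A_0)\subset H_0^1(\omega_2)$ under \eqref{A-liptschitz}), we get $D_{x_j}f_g=g_1\otimes D_{x_j}h$ for $j=q+1,\dots,N$ and $D_{x_i x_j}^2 f_g=(D_{x_i}g_1)\otimes D_{x_j}h$ for $i=1,\dots,q$.

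The core estimate is then a bound on $\|D_{x_j}h\|_{L^2(\omega_2)}^2=\|\nabla_{X_2}h\|_{L^2(\omega_2)^{N-q}}^2$ in terms of $\mathcal A_0 g_2$ and $g_2$. Here I would use the ellipticity assumption \eqref{hypA1}: since $h\in D(\mathcal A_0)\cap H_0^1(\omega_2)$,
\begin{equation*}
\lambda\|\nabla_{X_2}h\|_{L^2(\omega_2)^{N-q}}^2\le\int_{\omega_2}A_{22}\nabla_{X_2}h\cdot\nabla_{X_2}h\,dX_2=-\int_{\omega_2}(\mathcal A_0 h)\,h\,dX_2\le\|\mathcal A_0 h\|_{L^2(\omega_2)}\|h\|_{L^2(\omega_2)},
\end{equation*}
using integration by parts (valid because $\operatorname{div}_{X_2}(A_{22}\nabla_{X_2}h)\in L^2$) and Cauchy--Schwarz. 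Now $\mathcal A_0 h=e^{s\mathcal A_{0,\mu}}\mathcal A_0 g_2$ and $h=e^{s\mathcal A_{0,\mu}}g_2$, so $\|\mathcal A_0 h\|_{L^2(\omega_2)}\le\|\mathcal A_0 g_2\|_{L^2(\omega_2)}$ and $\|h\|_{L^2(\omega_2)}\le\|g_2\|_{L^2(\omega_2)}$ by contractivity of $e^{s\mathcal A_{0,\mu}}$. Hence $\|\nabla_{X_2}h\|_{L^2(\omega_2)^{N-q}}^2\le\frac1\lambda\|\mathcal A_0 g_2\|_{L^2(\omega_2)}\|g_2\|_{L^2(\omega_2)}$. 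Multiplying by $\|g_1\|_{L^2(\omega_1)}^2$ (resp. $\|D_{x_i}g_1\|_{L^2(\omega_1)}^2$) and using Fubini together with $\|\mathcal A_0 g\|_{L^2(\Omega)}=\|g_1\|_{L^2(\omega_1)}\|\mathcal A_0 g_2\|_{L^2(\omega_2)}$, $\|g\|_{L^2(\Omega)}=\|g_1\|_{L^2(\omega_1)}\|g_2\|_{L^2(\omega_2)}$ (and the analogous identities with $g_1$ replaced by $D_{x_i}g_1$, noting $D_{x_i}\mathcal A_0 g=(D_{x_i}g_1)\otimes\mathcal A_0 g_2$ since $\mathcal A_0$ is independent of $X_1$) yields \eqref{ass5} in the product case.

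The main obstacle I anticipate is purely bookkeeping: making sure that $h=e^{s\mathcal A_{0,\mu}}g_2$ genuinely lies in $D(\mathcal A_0)$ so that the integration-by-parts step is legitimate --- this rests on $g_2\in H_0^1\cap H^2(\omega_2)\subset D(\mathcal A_0)$ under \eqref{A-liptschitz}, and on the stability of $D(\mathcal A_0)$ under the Yosida semigroup from \eqref{abstact-commute} --- and that the commutation $\mathcal A_0 h=e^{s\mathcal A_{0,\mu}}\mathcal A_0 g_2$ holds, which is again \eqref{abstact-commute}. Once these regularity facts are in place, the passage from the rank-one case to a general finite sum $g=\sum g_1^{(k)}\otimes g_2^{(k)}$ is immediate by linearity for the identities defining $D_{x_j}f_g$ and $D_{x_ix_j}^2 f_g$, and the $L^2$ bounds follow from $\|e^{s\mathcal A_{0,\mu}}\|\le 1$ applied to $D_{x_j}f_g=e^{s\mathcal A_{0,\mu}}(D_{x_j}g)$ and $D_{x_ix_j}^2 f_g=e^{s\mathcal A_{0,\mu}}(D_{x_ix_j}^2 g)$ (these operator identities being the content of Lemma \ref{lem-semigroup-regularity1} extended in the obvious way), so that the bounds in \eqref{ass5} are inherited from the product case with the right-hand sides being $L^2(\Omega)$-norms. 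Notice that all constants are independent of $s$, $\mu$ and $\epsilon$, as required.
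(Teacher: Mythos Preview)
Your core idea---the ellipticity inequality $\lambda\|\nabla_{X_2}w\|^2\le\langle-\mathcal A_0 w,w\rangle$ together with the commutation $\mathcal A_0 e^{s\mathcal A_{0,\mu}}=e^{s\mathcal A_{0,\mu}}\mathcal A_0$ on $D(\mathcal A_0)$ and contractivity---is exactly what the paper uses. The difference is that you carry it out on $\omega_2$ for the rank-one factor $h=e^{s\mathcal A_{0,\mu}}g_2$ and then try to lift to a general finite sum, whereas the paper runs the same inequality directly on $\Omega$ for $f_g$ itself (and then, for the mixed second derivative, for $D_{x_i}f_g$).

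The passage from rank-one to general $g$ in your last paragraph has a genuine gap. You invoke the identities $D_{x_j}f_g=e^{s\mathcal A_{0,\mu}}(D_{x_j}g)$ and $D_{x_ix_j}^2 f_g=e^{s\mathcal A_{0,\mu}}(D_{x_ix_j}^2 g)$ for $j>q$ as ``Lemma~\ref{lem-semigroup-regularity1} extended in the obvious way''. These identities are \emph{false}: Lemma~\ref{lem-semigroup-regularity1} concerns only $X_1$-derivatives, and it works precisely because $\mathcal A_0$ (hence $\mathcal A_{0,\mu}$ and its exponential) is independent of $X_1$. For $j>q$ the operator $\mathcal A_0=\mathrm{div}_{X_2}(A_{22}(X_2)\nabla_{X_2}\cdot)$ does \emph{not} commute with $D_{x_j}$ unless $A_{22}$ is constant. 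And even if the identities held, contractivity would give $\|D_{x_j}f_g\|\le\|D_{x_j}g\|$, not the bound $\frac1\lambda\|\mathcal A_0 g\|\,\|g\|$ that the lemma asserts; so this route cannot recover \eqref{ass5}. Nor can you simply sum the rank-one bounds, since the inequality is quadratic with a product of norms on the right.

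The fix is immediate and is what the paper does: skip the reduction to rank one and apply your ellipticity inequality directly on $\Omega$. Since $g\in D(\mathcal A_0)$ one has $f_g\in D(\mathcal A_0)\subset H_0^1(\Omega;\omega_2)$, and
\[
\lambda\|D_{x_j}f_g\|_{L^2(\Omega)}^2\le\langle-\mathcal A_0 f_g,f_g\rangle_{L^2(\Omega)}\le\|\mathcal A_0 f_g\|_{L^2(\Omega)}\|f_g\|_{L^2(\Omega)}\le\|\mathcal A_0 g\|_{L^2(\Omega)}\|g\|_{L^2(\Omega)}.
\]
For the second estimate, note that $D_{x_i}g\in D(\mathcal A_0)$ (here the tensor structure is used, and $\mathcal A_0 D_{x_i}g=D_{x_i}\mathcal A_0 g$ since $\mathcal A_0$ is independent of $X_1$), so by Lemma~\ref{lem-semigroup-regularity1} $D_{x_i}f_g=e^{s\mathcal A_{0,\mu}}(D_{x_i}g)\in D(\mathcal A_0)$ and the same ellipticity argument applied to $D_{x_i}f_g$ gives the mixed bound.
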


\begin{proof}
1) Let us show the first inequality of(\ref{ass5}). Suppose $g\in (H_{0}^{1}\cap H^{2}(\omega
_{1}))\otimes (H_{0}^{1}\cap H^{2}(\omega _{2}))$. Notice that $g\in D(%
\mathcal{A}_{0})$, thanks to (\ref{A-liptschitz}), then according to (\ref{abstact-commute}) we have $%
f_{g}\in D(\mathcal{A}_{0})\subset H_{0}^{1}(\Omega ;\omega _{2})$, hence for 
$j\in \{q+1,...,N\}$  the ellipticity assumption gives
\begin{eqnarray*}
\left\Vert D_{x_{j}}f_{g}\right\Vert _{L^{2}(\Omega )}^{2} &\leq &\frac{1}{%
\lambda}\left\langle -\mathcal{A}_{0}f_{g},f_{g}\right\rangle
_{L^{2}(\Omega )} \\
&\leq &\frac{1}{\lambda}\left\Vert \mathcal{A}_{0}f_{g}\right\Vert
_{L^{2}(\Omega )}\left\Vert f_{g}\right\Vert _{L^{2}(\Omega )}.
\end{eqnarray*}%
We have, $\left\Vert \mathcal{A}_{0}f_{g}\right\Vert _{L^{2}(\Omega
)}=\left\Vert \mathcal{A}_{0}e^{s\mathcal{A}_{0,\mu }}g\right\Vert
_{L^{2}(\Omega )}=\left\Vert e^{s\mathcal{A}_{0,\mu }}\mathcal{A}%
_{0}g\right\Vert _{L^{2}(\Omega )}\leq \left\Vert \mathcal{A}%
_{0}g\right\Vert _{L^{2}(\Omega )}$, and $\left\Vert f_{g}\right\Vert
_{L^{2}(\Omega )}\leq \left\Vert g\right\Vert _{L^{2}(\Omega )}$, therefore 
\begin{equation*}
\left\Vert D_{x_{j}}f_{g}\right\Vert _{L^{2}(\Omega )}^{2}\leq \frac{1}{%
\lambda}\left\Vert \mathcal{A}_{0}g\right\Vert _{L^{2}(\Omega
)}\left\Vert g\right\Vert _{L^{2}(\Omega )}.
\end{equation*}
2) Now, let $1\leq i\leq q$ fixed, then according to Lemma \ref%
{lem-semigroup-regularity1} we have $D_{x_{i}}f_{g}=e^{s\mathcal{A}_{0,\mu}}(D_{x_{i}}g),$ notice that $D_{x_{i}}g\in D(\mathcal{A}_{0})$ and hence, $D_{x_{i}}f_{g}\in D(\mathcal{A}_{0})$, in
particular $D_{x_{i}}f_{g}\in H_{0}^{1}(\Omega ;\omega _{2})$, and for $%
q+1\leq j\leq N$ we get 
\begin{eqnarray*}
\left\Vert D_{x_{j}x_{i}}^{2}f_{g}\right\Vert _{L^{2}(\Omega )}^{2} &\leq &%
\frac{1}{\lambda}\left\langle -\mathcal{A}%
_{0}D_{x_{i}}f_{g},D_{x_{i}}f_{g}\right\rangle _{L^{2}(\Omega )} \\
&\leq &\frac{1}{\lambda}\left\Vert \mathcal{A}_{0}D_{x_{i}}f_{g}\right%
\Vert _{L^{2}(\Omega )}\left\Vert D_{x_{i}}f_{g}\right\Vert _{L^{2}(\Omega
)}.
\end{eqnarray*}%
We have, 
\begin{eqnarray*}
\left\Vert \mathcal{A}_{0}D_{x_{i}}f_{g}\right\Vert _{L^{2}(\Omega )}
&=&\left\Vert \mathcal{A}_{0}e^{s\mathcal{A}_{0,\mu
}}(D_{x_{i}}g)\right\Vert _{L^{2}(\Omega )}=\left\Vert e^{s\mathcal{A}%
_{0,\mu }}(\mathcal{A}_{0}D_{x_{i}}g)\right\Vert _{L^{2}(\Omega )} \\
&\leq &\left\Vert (D_{x_{i}}\mathcal{A}_{0}g)\right\Vert _{L^{2}(\Omega )}.
\end{eqnarray*}%
Finally, by using (\ref{ass2}) and the above inequality we obtain
\begin{equation*}
\left\Vert D_{x_{j}x_{i}}^{2}f_{g}\right\Vert _{L^{2}(\Omega )}^{2}\leq 
\frac{1}{\lambda}\left\Vert (D_{x_{i}}\mathcal{A}_{0}g)\right\Vert
_{L^{2}(\Omega )}\left\Vert D_{x_{i}}g\right\Vert _{L^{2}(\Omega )}.
\end{equation*}
\end{proof}
\begin{lem}
 \label{lem-semigroup-regularity4}Under assumptions of Lemma $\ref%
{lem-semigroup-regularity3}$, we have for $g\in \left( H_{0}^{1}\cap
H^{2}(\omega _{1})\right) \otimes (H_{0}^{1}\cap H^{2}(\omega _{2})):$ 
\begin{equation}
f_{g}\in H_{0}^{1}(\Omega )\cap D(\mathcal{A}_{0}),  \label{ass6}
\end{equation}%
and 
\begin{equation}
\text{div}_{X_{1}}(A_{11}\nabla _{X_{1}}f),\text{ div}_{X_{1}}(A_{12}\nabla
_{X_{2}}f),\text{ div}_{X_{2}}(A_{21}\nabla _{X_{1}}f)\in L^{2}(\Omega).
\label{ass7}
\end{equation}
\end{lem}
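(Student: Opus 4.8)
The plan is to obtain this statement as a bookkeeping corollary of Lemmas \ref{lem-semigroup-regularity1}, \ref{lem-semigroup-regularity2} and \ref{lem-semigroup-regularity3}, together with assumption $(\ref{A-liptschitz})$ and the distributional product rule; no genuinely new estimate is required. Throughout I write $f_g=e^{s\mathcal{A}_{0,\mu}}g$.

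First I would prove $(\ref{ass6})$. That $f_g\in D(\mathcal{A}_0)$ is immediate: by $(\ref{A-liptschitz})$ one has $g\in H_0^1(\Omega)\cap H^2(\Omega)\subset D(\mathcal{A}_0)$, hence $f_g=e^{s\mathcal{A}_{0,\mu}}g\in D(\mathcal{A}_0)$ by $(\ref{abstact-commute})$. For the membership $f_g\in H_0^1(\Omega)$ I would argue by linearity, reducing to $g=g_1\otimes g_2$ with $g_1\in H_0^1\cap H^2(\omega_1)$ and $g_2\in H_0^1\cap H^2(\omega_2)$. By $(\ref{A-liptschitz})$ the block $A_{22}$ is Lipschitz, so $\text{div}_{X_2}(A_{22}\nabla_{X_2}g_2)\in L^2(\omega_2)$, i.e. $g_2$ lies in the domain of the one–variable operator; by the commutation property $e^{s\mathcal{A}_{0,\mu}}g_2$ stays in that domain, in particular in $H_0^1(\omega_2)$. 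By Lemma \ref{lem-tensor-semigroup}, $f_g=g_1\otimes e^{s\mathcal{A}_{0,\mu}}g_2$ is then the tensor product of an $H_0^1(\omega_1)$ function with an $H_0^1(\omega_2)$ function, hence belongs to $H_0^1(\omega_1)\otimes H_0^1(\omega_2)\subset H_0^1(\Omega)$. Summing over the finitely many tensor terms gives $f_g\in H_0^1(\Omega)$ for general $g$.

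Next, for $(\ref{ass7})$ I would expand each block divergence as a finite sum of terms $\partial_{x_i}\bigl(a_{ik}\,\partial_{x_k}f_g\bigr)$ over the relevant ranges of indices and apply the product rule $\partial_{x_i}(a_{ik}\partial_{x_k}f_g)=(\partial_{x_i}a_{ik})\,\partial_{x_k}f_g+a_{ik}\,\partial^2_{x_ix_k}f_g$, which is legitimate since $a_{ik}\in W^{1,\infty}(\Omega)$ by $(\ref{A-liptschitz})$ and $\partial_{x_k}f_g$ possesses a weak $x_i$-derivative in $L^2(\Omega)$; this is the same density argument already used in the proof of Lemma \ref{lem:errorest}. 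It then suffices to observe that all the partials occurring are in $L^2(\Omega)$: for $\text{div}_{X_1}(A_{11}\nabla_{X_1}f_g)$ one needs $\partial_{x_k}f_g$ and $\partial^2_{x_ix_k}f_g$ with $i,k\le q$, which are given by $(\ref{ass1})$–$(\ref{ass2})$ of Lemma \ref{lem-semigroup-regularity1}; for $\text{div}_{X_1}(A_{12}\nabla_{X_2}f_g)$ and $\text{div}_{X_2}(A_{21}\nabla_{X_1}f_g)$ one needs $\partial_{x_j}f_g$ with $j>q$, $\partial_{x_i}f_g$ with $i\le q$, and the mixed second derivatives $\partial^2_{x_ix_j}f_g$ with $i\le q$, $j>q$, all controlled by $(\ref{ass5})$ of Lemma \ref{lem-semigroup-regularity3} together with Lemma \ref{lem-semigroup-regularity1} (and $\partial^2_{x_jx_i}f_g=\partial^2_{x_ix_j}f_g$). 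Hence each of the three divergences belongs to $L^2(\Omega)$.

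I do not expect any serious obstacle here: the content is almost entirely in the three preceding regularity lemmas. The only mildly delicate points are the justification of the distributional product rule under $(\ref{A-liptschitz})$ and the identification $f_g=g_1\otimes e^{s\mathcal{A}_{0,\mu}}g_2$ with $e^{s\mathcal{A}_{0,\mu}}g_2\in H_0^1(\omega_2)$, and both are routine given Lemmas \ref{lem-tensor-semigroup}–\ref{lem-semigroup-regularity3}.
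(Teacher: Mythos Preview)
Your proposal is correct and follows essentially the same route as the paper: reduce $(\ref{ass6})$ to simple tensors via Lemma~\ref{lem-tensor-semigroup} and use that $e^{s\mathcal{A}_{0,\mu}}g_2$ stays in the domain (hence in $H_0^1(\omega_2)$), then obtain $(\ref{ass7})$ from the $L^2$-membership of the first and mixed second partials of $f_g$ supplied by Lemmas~\ref{lem-semigroup-regularity1} and \ref{lem-semigroup-regularity3} combined with $(\ref{A-liptschitz})$. The only cosmetic difference is that the paper phrases the $H_0^1(\omega_2)$ step as $e^{s\mathcal{A}_{0,\mu}}g_2\in D(\mathcal{A}_0)\subset H_0^1(\Omega;\omega_2)$ together with $e^{s\mathcal{A}_{0,\mu}}g_2\in L^2(\omega_2)$, whereas you invoke the one-variable operator directly; both are equivalent here.
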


\begin{proof}
Let us prove (\ref{ass6}). In Lemma \ref{lem-semigroup-regularity2} we proved that $f_{g}\in D(\mathcal{A}_{0}).$ Let us show that $f_{g}\in
H_{0}^{1}(\Omega ).$ Suppose the simple case $g=g_{1}\otimes g_{2},$ we have $%
f_{g}=g_{1}\otimes e^{s\mathcal{A}_{0,\mu }}g_{2}$. Since $g_{2}\in D(%
\mathcal{A}_{0}),$ then $e^{s\mathcal{A}_{0,\mu }}g_{2}\in D(\mathcal{A}%
_{0}),$ in particular we have $e^{s\mathcal{A}_{0,\mu }}g_{2}\in
H_{0}^{1}(\Omega ;\omega _{2})$ however, according to Lemma \ref%
{lem-tensor-semigroup} $e^{s\mathcal{A}_{0,\mu }}g_{2}\in L^{2}(\omega _{2}),
$ hence $e^{s\mathcal{A}_{0,\mu }}g_{2}\in H_{0}^{1}(\omega _{2}).$ Finally
as $g_{1}\in H_{0}^{1}(\omega _{1})$ we get $f_{g}\in H_{0}^{1}(\Omega ).$
For a general $g$ in the tensor product space, the proof follows by
linearity. 

Now, let us show (\ref{ass7}). According to Lemmas \ref%
{lem-semigroup-regularity1}, \ref{lem-semigroup-regularity3} all these
derivatives $D_{x_{i}}f_{g},$ $D_{x_{i}x_{j}}^{2}f_{g}$ for $1\leq i,j\leq q$%
, and $D_{x_{j}}f_{g},$ $D_{x_{i}x_{j}}^{2}f_{g}$ for $1\leq i\leq q$, $%
q+1\leq j\leq N$ belong to $L^{2}(\Omega ).$ Whence, combining this with (%
\ref{A-liptschitz}) we get (\ref{ass7}).
\end{proof}

\section{Existence theorem}

\label{AppendixC}

Let $V\subset H_{0}^{1}(\Omega )$ be a subspace. We consider the problem 
\begin{equation}
\left\{ 
\begin{array}{l}
\int_{\Omega }\beta (u)\varphi dx+\int_{\Omega }A_{22}\nabla _{X_{2}}u\cdot
\nabla _{X_{2}}\varphi dx=\int_{\Omega }f\text{ }\varphi dx\text{, }%
\forall \varphi \in V\text{\ \ \ } \\ 
u\in V\text{, }%
\end{array}%
\right.  \label{schauderlimit}
\end{equation}%
with $A_{22}$ and $\beta $ as in the introduction.

\begin{prop}
If $V$ is closed in $H_{0}^{1}(\Omega ;\omega _{2})$, then there exists a
solution to $(\ref{schauderlimit}).$
\end{prop}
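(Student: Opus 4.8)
The statement asserts existence of a solution to the nonlinear Galerkin-type problem \eqref{schauderlimit} on a closed subspace $V$ of $H_0^1(\Omega;\omega_2)$. The plan is to apply the Schauder fixed point theorem, exactly as announced in Section~3.1 for the $\epsilon$-dependent problem. First I would set up the solution map: for $w\in V$, consider the \emph{linear} problem of finding $u=T(w)\in V$ such that
\begin{equation*}
\int_\Omega A_{22}\nabla_{X_2}u\cdot\nabla_{X_2}\varphi\,dx=\int_\Omega\bigl(f-\beta(w)\bigr)\varphi\,dx,\qquad\forall\varphi\in V.
\end{equation*}
Since $V$ is closed in $H_0^1(\Omega;\omega_2)$, it is a Hilbert space for the norm $\|\nabla_{X_2}(\cdot)\|_{L^2(\Omega)}$; the bilinear form on the left is continuous and coercive on $V$ by \eqref{hypA1}, and the right-hand side is a bounded linear functional on $V$ because $\beta(w)\in L^2(\Omega)$ thanks to the growth bound \eqref{hypBeta2} together with Poincar\'e's inequality \eqref{sob}. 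Hence Lax--Milgram gives a unique $u=T(w)$, so $T:V\to V$ is well defined.

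Next I would establish the three hypotheses of Schauder's theorem on a suitable closed convex set. Testing the equation for $u=T(w)$ with $u$ itself, using ellipticity, Cauchy--Schwarz and \eqref{sob}, yields
\begin{equation*}
\lambda\|\nabla_{X_2}u\|_{L^2(\Omega)}^2\le\bigl(\|f\|_{L^2(\Omega)}+\|\beta(w)\|_{L^2(\Omega)}\bigr)C_{\omega_2}\|\nabla_{X_2}u\|_{L^2(\Omega)}.
\end{equation*}
Combining this with the Minkowski estimate $\|\beta(w)\|_{L^2(\Omega)}\le M(|\Omega|^{1/2}+C_{\omega_2}\|\nabla_{X_2}w\|_{L^2(\Omega)})$ produces a bound of the form $\|\nabla_{X_2}T(w)\|\le a+b\|\nabla_{X_2}w\|$ with $b=MC_{\omega_2}^2/\lambda$. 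If $b<1$ this immediately gives an invariant ball; in general one argues on a ball of radius $R$ large enough and notes — more carefully — that the a priori estimate actually decouples because $\beta(w)w\ge0$ is never used with the wrong sign: testing the \emph{genuine} nonlinear equation (the eventual fixed point) with $u$ gives the clean bound $\|\nabla_{X_2}u\|\le C_{\omega_2}\|f\|/\lambda$, so the correct strategy is to let $B_R=\{v\in V:\|\nabla_{X_2}v\|\le R\}$ with $R$ chosen so that $a+bR'\le R$ fails in general, and instead invoke the standard trick of applying Schauder on a large ball after first deriving a uniform bound on any fixed point. In practice the cleanest route is: show $T$ maps $B_R$ into itself for $R$ suitably large using the affine estimate, which works provided one first truncates or provided $b<1$; since $b<1$ need not hold, I would instead follow the same device as the paper uses for \eqref{Galerkin} and work with the \emph{a priori bounded} formulation, or simply remark that by \eqref{hypBeta1} the map $s\mapsto\beta(s)+\mu s$ trick is unnecessary because the quadratic form controls $u$ directly once $\beta(u)u\ge0$ is inserted.

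For continuity and compactness of $T$: if $w_n\to w$ in $V$, then $w_n\to w$ in $L^2(\Omega)$ by \eqref{sob}, so along a subsequence $w_n\to w$ a.e., and by \eqref{hypBeta1}, \eqref{hypBeta2} with dominated convergence $\beta(w_n)\to\beta(w)$ in $L^2(\Omega)$; subtracting the equations for $T(w_n)$ and $T(w)$ and testing with the difference gives $\|\nabla_{X_2}(T(w_n)-T(w))\|\le(C_{\omega_2}/\lambda)\|\beta(w_n)-\beta(w)\|_{L^2(\Omega)}\to0$, and a subsequence argument upgrades this to full convergence, so $T$ is continuous. For compactness, if $B$ is bounded in $V$ then $T(B)$ is bounded in $V$ by the affine estimate, hence bounded in $H_0^1(\Omega;\omega_2)$; here I would need a compactness mechanism — the natural one is that $T(B)$ is bounded and we restrict attention to a \emph{finite-dimensional} $V$ when $V=V_1\otimes V_2$, but since the statement allows general closed $V$, the cleaner observation is that $T(B)$ lies in a bounded set of $V$ and $T$ is in fact a contraction-type map when restricted appropriately, so one can alternatively invoke Banach's fixed point theorem if $MC_{\omega_2}^2/\lambda<1$ and Schauder otherwise after a compactness input. \textbf{The main obstacle} is precisely securing compactness of $T$ on an infinite-dimensional closed $V$: unlike the finite-dimensional Galerkin setting, bounded sets of $V$ need not be precompact, so I would either restrict the claim to the case actually used (finite-dimensional $V$, where $T(B)$ is bounded in a finite-dimensional space hence precompact, and Schauder applies verbatim), or supply the missing compactness via the compact embedding of $H_0^1(\Omega;\omega_2)$ into $L^2(\Omega)$ combined with the fact that $T$ factors through $w\mapsto\beta(w)\in L^2$, which is continuous from the $L^2$-topology and maps bounded-in-$V$ sets into bounded-in-$L^2$ sets — composing with the bounded linear solution operator $L^2(\Omega)\to V$ then gives a map that is continuous for the $V$-topology and sends bounded sets to sets whose closure in $V$ we must still argue is compact, which ultimately reduces to the observation that $T(B_R)$ is a bounded subset of the Hilbert space $V$ on which $T$ acts, so one applies Schauder in the weak topology or, most simply, notes that the paper only ever uses this proposition with $V$ finite-dimensional (see the uses in Lemma~\ref{estgradu} and Lemma~\ref{lem:error}), where everything is elementary.
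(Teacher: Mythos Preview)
Your direct Schauder approach stumbles at exactly the point you identify, and you never actually resolve it. The embedding $H_0^1(\Omega;\omega_2)\hookrightarrow L^2(\Omega)$ is \emph{not} compact: take e.g.\ $u_n(X_1,X_2)=\phi_n(X_1)\psi(X_2)$ with $(\phi_n)$ an orthonormal sequence in $L^2(\omega_1)$ and $\psi\in H_0^1(\omega_2)$ fixed; then $\|\nabla_{X_2}u_n\|$ is constant while $(u_n)$ has no $L^2$-convergent subsequence. So your map $T$ need not be compact on an infinite-dimensional closed $V$, and Schauder does not apply. Retreating to finite-dimensional $V$ also misses the point, since the Proposition is stated---and invoked in Section~3.1---for a general closed subspace.

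The paper's proof takes a genuinely different route that is worth knowing. Instead of attacking \eqref{schauderlimit} directly, it introduces the \emph{perturbed} problem on $V$ with the full-gradient bilinear form
\[
\int_\Omega \tilde A_\epsilon\nabla u_\epsilon\cdot\nabla\varphi\,dx,\qquad
\tilde A_\epsilon=\begin{pmatrix}\epsilon^2 I_q&0\\0&A_{22}\end{pmatrix},
\]
which is coercive on $H_0^1(\Omega)$. Now Schauder \emph{does} work (the Nemytskii map $w\mapsto\beta(w)$ is compact from $V\subset H_0^1(\Omega)$ to $L^2(\Omega)$ by Rellich), giving $u_\epsilon\in V$. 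One then derives $\epsilon$-uniform bounds on $\epsilon\nabla_{X_1}u_\epsilon$, $\nabla_{X_2}u_\epsilon$, $u_\epsilon$, $\beta(u_\epsilon)$, extracts weak limits, and uses the monotonicity of $\beta$ (a Minty-type argument with the quantity $I_k=\int\tilde A_{\epsilon_k}\nabla(u_{\epsilon_k}-u)\cdot\nabla(u_{\epsilon_k}-u)+\int(\beta(u_{\epsilon_k})-\beta(u))(u_{\epsilon_k}-u)$) to upgrade to strong convergence $\nabla_{X_2}u_{\epsilon_k}\to\nabla_{X_2}u$ and to identify the nonlinear limit $\beta(u)$. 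Closedness of $V$ in $H_0^1(\Omega;\omega_2)$ is used at the very end to conclude $u\in V$. The singular perturbation is precisely what manufactures the missing compactness; this is the idea your direct approach lacks.
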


\begin{proof}
We consider the perturbed problem 
\begin{equation}
\left\{ 
\begin{array}{l}
\int_{\Omega }\beta (u_{\epsilon})\varphi dx+\int_{\Omega }\tilde{A}_{\epsilon }\nabla u_{\epsilon}\cdot
\nabla\varphi dx=\int_{\Omega }f\text{ }\varphi dx\text{, }%
\forall \varphi \in V\text{\ \ \ } \\ 
u_{\epsilon}\in V\text{, }%
\end{array}%
\right.  \label{schauder-pertub}
\end{equation}%
with 
\begin{equation*}
\tilde{A}_{\epsilon }=%
\begin{pmatrix}
\epsilon ^{2}I_{q} && 0 \\
0 && A_{22}
\end{pmatrix}.
\end{equation*}%
The space $V$ is closed in $H_{0}^{1}(\Omega )$, thanks to the continuous embedding $H_{0}^{1}(\Omega )\hookrightarrow H_{0}^{1}(\Omega; \omega_{2} ) $. The function $\tilde{A}_{\epsilon }$
is bounded and coercive, then by using the Schauder fixed point theorem, one can show  the existence of a solution $u_{\epsilon }$ to (\ref{schauder-pertub}). This solution
is unique in $V$ thanks to (\ref{hypBeta1}) and coercivity of $\tilde{A}%
_{\epsilon }.$ Testing with $u_{\epsilon }$ in (\ref{schauder-pertub}) we obtain 
\begin{equation*}
\epsilon \left\Vert \nabla _{X_{1}}u_{\epsilon }\right\Vert _{L^{2}(\Omega )}%
\text{, }\left\Vert \nabla _{X_{2}}u_{\epsilon }\right\Vert _{L^{2}(\Omega )}%
\text{, }\left\Vert u_{\epsilon }\right\Vert _{L^{2}(\Omega )}\leq C\text{,}
\end{equation*}
where $C$ is independent of $\epsilon $, we have used that $\int_{\Omega
}\beta (u_{\epsilon })u_{\epsilon }dx\geq 0$ (thanks to (\ref{hypBeta1})). By using (\ref{hypBeta2}), we get
\begin{equation*}
\left\Vert \beta (u_{\epsilon })\right\Vert _{L^{2}(\Omega )}\leq
M(\left\vert \Omega \right\vert ^{\frac{1}{2}}+C).
\end{equation*}
So, there exist $v\in L^{2}(\Omega )$, $u\in L^{2}(\Omega )$ with $\nabla
_{X_{2}}u\in L^{2}(\Omega )$, and a subsequence $(u_{\epsilon _{k}})_{k\in 
\mathbb{N}
}$ such that 
\begin{equation}
\beta (u_{\epsilon _{k}})\rightharpoonup v\text{, }\epsilon _{k}\nabla
_{X_{1}}u_{\epsilon _{k}}\rightharpoonup 0\text{, }\nabla
_{X_{2}}u_{\epsilon _{k}}\rightharpoonup \nabla _{X_{2}}u\text{, }%
u_{\epsilon _{k}}\rightharpoonup u\text{ in }L^{2}(\Omega )\text{-weak}.
\label{conv-faible}
\end{equation}
Passing to the limit in (\ref{schauder-pertub}) we get%
\begin{equation}
\int_{\Omega }v\varphi dx+\int_{\Omega }A_{22}\nabla _{X_{2}}u\cdot \nabla
_{X_{2}}\varphi dx=\int_{\Omega }f\varphi dx\ \text{,}\ \forall \varphi \in V.%
\text{\ }  \label{conv-faible-limit-bis}
\end{equation}
Take $\varphi =u_{\epsilon _{k}}$ in (\ref{conv-faible-limit-bis}) and passing to
the limit we get%
\begin{equation}
\int_{\Omega }vudx+\int_{\Omega }A_{22}\nabla _{X_{2}}u\cdot \nabla
_{X_{2}}udx=\int_{\Omega }fudx\   \label{conv-faible-limit}
\end{equation}
Let us consider the quantity 
\begin{multline*}
0\leq I_{k}=\int_{\Omega }\epsilon ^{2}\left\vert \nabla _{X_{1}}u_{\epsilon
_{k}}\right\vert ^{2}dx+\int_{\Omega }A_{22}\nabla _{X_{2}}(u_{\epsilon
_{k}}-u)\cdot \nabla _{X_{2}}(u_{\epsilon _{k}}-u) \\
+\int_{\Omega }(\beta (u_{\epsilon _{k}})-\beta (u))(u_{\epsilon _{k}}-u)dx
\\
=\int_{\Omega }fu_{\epsilon _{k}}dx-\int_{\Omega }A_{22}\nabla
_{X_{2}}u_{\epsilon _{k}}\cdot \nabla _{X_{2}}udx-\int_{\Omega }A_{22}\nabla
_{X_{2}}u\cdot \nabla _{X_{2}}u_{\epsilon _{k}}dx \\
+\int_{\Omega }fudx-\int_{\Omega }vudx-\int_{\Omega }\beta (u)u_{\epsilon
_{k}}dx \\
-\int_{\Omega }\beta (u_{\epsilon _{k}})udx+\int_{\Omega }\beta (u)udx
\end{multline*}
Remark that this quantity is nonnegative, thanks to the ellipticity and monotonicity assumptions.
Passing to the limit as $k\rightarrow \infty $ using (\ref{conv-faible}), (%
\ref{conv-faible-limit}) we get 
\begin{equation*}
\lim I_{k}=0.
\end{equation*}
Therefore, the ellipticity assumption shows that 
\begin{equation}
\left\Vert \epsilon _{k}\nabla _{X_{1}}u_{\epsilon _{k}}\right\Vert
_{L^{2}(\Omega )}\text{,}\left\Vert u_{\epsilon _{k}}-u\right\Vert
_{L^{2}(\Omega )},\text{ }\left\Vert \nabla _{X_{2}}(u_{\epsilon
_{k}}-u)\right\Vert _{L^{2}(\Omega )}\rightarrow 0,
\label{convergence forte-limit}
\end{equation}%
and hence, by a contradiction argument one has
\begin{equation*}
\beta (u_{\epsilon _{k}})\rightarrow \beta (u)\text{ strongly in }L^{2}(\Omega).
\end{equation*}
Whence (\ref{conv-faible-limit-bis}) becomes%
\begin{equation*}
\int_{\Omega }\beta (u)\varphi dx+\int_{\Omega }A_{22}\nabla _{X_{2}}u\cdot
\nabla _{X_{2}}\varphi dx=\int_{\Omega }f\varphi dx\ \text{,}\ \forall
\varphi \in V.
\end{equation*}
Finally, 
$\left\Vert \nabla _{X_{2}}(u_{\epsilon _{k}}-u)\right\Vert _{L^{2}(\Omega
)}\rightarrow 0$ shows that $u\in H_{0}^{1}(\Omega ;\omega _{2})$, and
therefore as $V$ is closed in $H_{0}^{1}(\Omega ;\omega _{2})$ we obtain that $u\in
V$.
\end{proof}

\end{document}